\newtheorem{theorem}{Theorem}
\newtheorem{acknowledgements}[theorem]{Acknowledgements}
\newtheorem{corollary}{Corollary}
\newtheorem{definition}{Definition}
\newtheorem{lemma}{Lemma}
\newtheorem{proposition}{Proposition}
\newtheorem{remark}{Remark}
\newenvironment{proof}[1][Proof]{\noindent\textbf{#1.} }{\ \rule{0.5em}{0.5em}}
\begin{document}

\title{Number of Distinct Sites Visited by a Random Walk with Internal States
}


\author{P\'{e}ter N\'{a}ndori         
	}
\date{\vspace{-5ex}}



\maketitle

\begin{abstract}
In the classical paper of Dvoretzky-Erd\H{o}s \cite{Dv-Er}, asymptotics for
the expected value and the variance of the number of distinct sites visited
by a Simple Symmetric Random Walk were calculated. Here, these results are
generalized for Random Walks with Internal States. Moreover, both weak and
strong laws of large numbers are proved. As a tool for these results, the
error term of the local limit theorem in \cite{Kramli-Szasz}\ is also 
estimated.
\end{abstract}

\section{Introduction}
\label{Secintro}

The model of a random walk with internal states (or, alternatively, random
walk with internal degrees of freedom; briefly RWwIS) was introduced by
Sinai in 1981 in his Kyoto talk \cite{Sinai}. His aim was to get an
efficient tool for examining the Lorentz process (in this context, internal
states would represent the elements of the Markov partition or of a 
Markov sieve).  For this kind
of argument see, for instance, \cite{PSz}. Beside the Lorentz process, 
however, several 
other motivations and applications have appeared, among others, 
in some models of queueing systems, cf. \cite{Hughes} as for an extensive 
treatment of other motivations.
Nevertheless, the
investigation of this model is important for its own sake, as it is a 
manifest generalization of a gem of probability theory: the 
simple symmetric random walk. Let us begin with the definition of RWwIS
with the notation in \cite{Kramli-Szasz} and \cite{Kramli-Szasz2} (or of \cite{Kramli-Simanyi-Szasz}, 
where RWwIS served as a model of Fourier law of heat conduction).
 
\begin{definition}
Let E be a finite set. On the set $H=%
\mathbb{Z}
^{d}\times E$ $(d=1,2,...)$, the Markov chain $\xi _{n}=(\eta
_{n},\varepsilon _{n})$ is a random walk with internal states (RWwIS), if
for $\forall x_{n},x_{n+1}\in 
\mathbb{Z}
^{d},\quad j_{n},j_{n+1}\in E$
\end{definition}

\begin{equation*}
P(\xi _{n+1}=(x_{n+1},j_{n+1})|\xi
_{n}=(x_{n},j_{n}))=p_{x_{n+1}-x_{n},j_{n},j_{n+1}}.
\end{equation*}

In fact, $E$ could be countable, as well, but we will consider only the
finite case. We will denote $s=\#E$.

There are some basic assumptions which will throughout be supposed. These
are the following:

\begin{enumerate}
\item[(i)] $\left( \varepsilon _{0},\varepsilon _{1},...\right) $ -
obviously a Markov chain - is irreducible and aperiodic (its stationary
distribution will be denoted by $\mu $)

\item[(ii)] the arithmetics are trivial, with the notation in \cite%
{Kramli-Szasz}, $L=%
\mathbb{Z}
^{d}$

\item[(iii)] the expectation of one step is zero provided that $\varepsilon
_{0}$ is distributed according to its unique stationary measure

\item[(iv)] the covariance matrix, which is exactly defined in Section \ref%
{Sectlimittheo}, exists and is nonsingular.
\end{enumerate}

In general, we will assume that $\eta _{0}=0$.
Let $L_{d}\left( n\right) $ denote the number of distinct sites visited by $\left( \eta _k \right)_k$ up to $n$ steps. The expectation of $L_{d}\left( n\right) $ is $%
E_{d}\left( n\right) $, and the variance is $V_{d}\left( n\right) $. 
$\{ e_j \}_{j=1,...,s}$ is the standard basis in $\mathbb{R}^s$,
and  $\underline{1}=\left( 1,1,...,1\right) ^{T}$. Our aim
is to find asymptotics of $E_{d}\left( n\right) $, further, by using bounds
on $V_{d}\left( n\right) $, we want to prove weak and strong laws of large
numbers. Similar results in terms of simple symmetric random walks (which
will later on be referred to as SSRW) are found in \cite{Dv-Er}. Recently,
in the case of two dimensional Lorenz process, P\`{e}ne discussed the same
question in \cite{Pene}. There are numerous fairly new papers on $%
L_{d}\left( n\right) $ for random walks with independent steps (see \cite%
{Bass}\ and references wherein).

This paper is organized as follows: in Section \ref{Sectlimittheo} the main
theorem of \cite{Kramli-Szasz} is generalized. Namely, a remainder term of
the local limit theorem is computed, as it will be necessary for estimating $%
E_{2}\left( n\right) $. A further refinement of the local limit theorem will
also be given as it will be useful when proving the strong law of large
numbers in the plane. Although these results are used in the forthcoming
Sections, they can be interesting in their own rights. In Section \ref%
{Secthighdim}, the number of visited points in the high dimensional case,
i.e. when $d\geq 3$, is dealt with. We prove asymptotics for $E_{d}\left(
n\right) $, and estimate $V_{d}\left( n\right) $, from which we can prove
both the weak and strong laws of large numbers. In this Section, we will not
use the result of Section \ref{Sectlimittheo}, Theorem 5.2. in \cite%
{Kramli-Szasz}\ will be enough for our purposes. In Section \ref{Secttwodim}
the $d=2$ case is discussed. For $E_{2}\left( n\right) $, same asymptotics ($%
const\frac{n}{\log n}$) is found as in \cite{Dv-Er}, but with some different
constant. $V_{2}\left( n\right) $ is also estimated, and the weak law of
large numbers is also proved. The proof of the strong law in the plane is a
little bit cumbersome calculation, so it is postponed to Section \ref%
{Sectstronglaw}. In Section \ref{Sectonedim}, the one dimensional settings
are considered. This case requires a little bit different approach from the
previous ones (and is not treated in \cite{Dv-Er}), so the application of a
Tauberian theorem will be very useful. Section \ref{Sectfinalremarks} gives
some remarks.

\section{Preliminaries}
\label{Sectlimittheo}

\subsection{Local limit theorem with remainder term}
\label{subloc}

In this subsection, we calculate a remainder term for Theorem 5.2. in \cite%
{Kramli-Szasz}. Furthermore, another refinement of this theorem will be
proved, as it will be used when proving the strong law of large numbers in
the plane. First, we reformulate the mentioned theorem. We have to start
with some definitions. Denote%
\begin{eqnarray*}
A_{y} &=&\left( p_{y,j,k}\right) _{j,k=1,...,s}:%
\mathbb{C}
^{s}\rightarrow 
\mathbb{C}
^{s}, \\
Q &=&\underset{y\in 
\mathbb{Z}
^{d}}{\sum }A_{y}, \\
M_{l} &=&\underset{y\in 
\mathbb{Z}
^{d}}{\sum }y_{l}A_{y}, \\
\Sigma _{l,m} &=&\underset{y\in 
\mathbb{Z}
^{d}}{\sum }y_{l}y_{m}A_{y}.
\end{eqnarray*}%
So, the transition matrix of the Markov chain $\left( \varepsilon
_{0},\varepsilon _{1},...\right) $ is $Q$ and its unique stationary measure
is $\mu $.

\begin{theorem}
(Kr\'{a}mli-Sz\'{a}sz \cite{Kramli-Szasz}) Consider a RWwIS\ in $%
\mathbb{Z}
^{d}$ and assume that the matrix $\sigma =\left( \sigma _{l,m}\right)
_{1\leq l,m\leq d}$ whose elements are%
\begin{equation*}
\sigma _{l,m}=\left\langle \mu ,\Sigma _{l,m}\underline{1}\right\rangle -\left\langle
\mu ,M_{l}\left( Q-1\right) ^{-1}M_{m}\underline{1}\right\rangle -\left\langle \mu
,M_{m}\left( Q-1\right) ^{-1}M_{l}\underline{1}\right\rangle
\end{equation*}%
(which can be called a covariance matrix) is positive definite, then%
\begin{equation*}
\underset{\left( x,k\right) \in H}{\sum }\left\vert P\left( \xi
_{n}=(x,k)|\xi _{0}=(0,j)\right) -n^{-d/2}\mu _{k}g_{\sigma }\left( \frac{x}{%
\sqrt{n}}\right) \right\vert \rightarrow 0
\end{equation*}%
as $n\rightarrow \infty $, where $g_{\sigma }\left( x\right) $ denotes the
density of a Gaussian distribution with mean $0$ and covariance matrix $%
\sigma $.
\end{theorem}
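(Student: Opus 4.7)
The plan is to use Fourier analysis on $\mathbb{Z}^d$, reducing everything to spectral properties of the matrix-valued characteristic function $\hat A(t) := \sum_{y\in\mathbb{Z}^d} e^{i\langle t,y\rangle} A_y$ on the torus $\mathbb{T}^d=[-\pi,\pi]^d$. By Fourier inversion,
\[
P(\xi_n=(x,k)\mid\xi_0=(0,j)) = \frac{1}{(2\pi)^d}\int_{\mathbb{T}^d} e^{-i\langle t,x\rangle}\bigl(\hat A(t)^n\bigr)_{j,k}\,dt,
\]
and $\mu_k g_\sigma(x/\sqrt n) n^{-d/2}$ is represented similarly with integrand $\mu_k\, e^{-i\langle t,x\rangle} e^{-n\langle t,\sigma t\rangle/2}$ (extended by $0$ off $\mathbb{T}^d$, modulo a negligible Gaussian tail). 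So the whole task is to compare $\hat A(t)^n$ with the rank-one matrix-valued function $e^{-n\langle t,\sigma t\rangle/2}\,\underline 1\mu^{T}$.

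Near $t=0$, assumption (i) makes $1$ a simple eigenvalue of $\hat A(0)=Q$ with right/left eigenvectors $\underline 1,\mu$ and the rest of the spectrum inside the open unit disk. Analytic perturbation theory then provides, on a neighborhood of $0$, a decomposition $\hat A(t)^n = \lambda(t)^n P(t) + R(t)^n$ with $\lambda(0)=1$, $P(0)=\underline 1\mu^{T}$, and $\|R(t)^n\|\le C\rho^n$ for some $\rho<1$. Expanding $\lambda$ to second order and using assumption (iii) (zero mean, i.e.\ $\langle \mu, M_l\underline 1\rangle=0$) kills the linear term; the Kato formula for the second derivative of a perturbed simple eigenvalue produces exactly
\[
\lambda(t)=1-\tfrac12\langle t,\sigma t\rangle + O(|t|^3),
\]
where $\sigma_{l,m}$ has the three-term form in the statement (the diagonal $\langle\mu,\Sigma_{l,m}\underline 1\rangle$ from the second derivative of $\hat A$ itself, plus the two symmetric terms $\langle\mu,M_l(Q-1)^{-1}M_m\underline 1\rangle$ coming from the reduced resolvent on $\{\underline 1\}^{\perp}$). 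Hence for fixed $s$, $\lambda(s/\sqrt n)^n\to e^{-\langle s,\sigma s\rangle/2}$.

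Away from zero I would show that the spectral radius of $\hat A(t)$ is strictly less than $1$ for every $t\in\mathbb{T}^d\setminus\{0\}$. Entrywise $|\hat A(t)v|\le Q|v|$, and equality in the Perron sense forces $e^{i\langle t,y\rangle}$ to be constant along the support of the step distribution on each recurrent class of $Q$; aperiodicity in (i) together with the triviality of arithmetics (ii), $L=\mathbb{Z}^d$, then forces $t\in 2\pi\mathbb{Z}^d$. By compactness, $\sup_{|t|\ge\delta}\|\hat A(t)^n\|\le C\rho_\delta^n$ with $\rho_\delta<1$, so this region contributes exponentially little to the Fourier integral. Combining with the previous paragraph and substituting $t=s/\sqrt n$ yields the pointwise local CLT: $n^{d/2}P(\xi_n=(x,k)\mid\xi_0=(0,j))-\mu_k g_\sigma(x/\sqrt n)\to 0$, uniformly for $|x|\le R\sqrt n$.

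The main obstacle is upgrading this pointwise statement to the $\ell^1$ claim in the theorem, since the trivial bound (pointwise error times the volume $R^d n^{d/2}$) is far too crude. I would split the sum at $|x|\le R\sqrt n$ and $|x|> R\sqrt n$. On the bulk, I would integrate the Fourier estimates more carefully, retaining the $O(|t|^3)$ error in the eigenvalue expansion to gain extra decay in $s$ after the rescaling $t=s/\sqrt n$, so that the bulk contribution tends to $0$ uniformly in $j,k$. On the tails, a weighted estimate such as $\sum_x(1+|x|^2/n)^d P(\xi_n=(x,k))\le C$, obtainable by differentiating $\hat A(t)^n$ twice $d$ times at $t=0$ and using the spectral decomposition, dominates the RWwIS contribution, while the Gaussian tail is handled directly. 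Sending $n\to\infty$ and then $R\to\infty$ gives the stated $\ell^1$ convergence.
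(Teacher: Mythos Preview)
The paper does not actually prove this theorem: immediately after the statement it says ``We omit the proof, it can be found in \cite{Kramli-Szasz}'' (the citation \cite{Dv-Er} there is a typo). So there is no ``paper's own proof'' to compare against. Your Fourier-analytic strategy---the spectral decomposition $\hat A(t)^n=\lambda(t)^n P(t)+R(t)^n$ near $t=0$ via analytic perturbation of the simple Perron eigenvalue, the second-order expansion of $\lambda$ producing $\sigma$ through the reduced resolvent $(Q-1)^{-1}$, exponential decay of $\|\hat A(t)^n\|$ for $t$ bounded away from $0$ using aperiodicity and $L=\mathbb{Z}^d$---is precisely the method of Kr\'amli--Sz\'asz, and it is exactly what the present paper runs (with higher-order expansions) for its own refinements, Theorem~\ref{limittheorem} and Propositions~\ref{limitpropmoredimension}--\ref{Penelemma}.

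Two small caveats on moment hypotheses. First, under assumption (iv) alone you only have second moments, so the eigenvalue expansion is $\lambda(t)=1-\tfrac12\langle t,\sigma t\rangle+o(|t|^2)$, not $O(|t|^3)$; the latter is what the paper obtains only after \emph{adding} the third-moment hypothesis (\ref{harmadikmomentum}). This does not hurt your pointwise LLT: $o(|t|^2)$ still gives $\lambda(s/\sqrt n)^n\to e^{-\langle s,\sigma s\rangle/2}$ with a dominated-convergence envelope. Second, your proposed tail control via $\sum_x(1+|x|^2/n)^d P(\xi_n=(x,k))\le C$ would require differentiating $\hat A(t)^n$ $2d$ times, hence $2d$-th moments of the step, which are not assumed. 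The intended $\ell^1$ upgrade (cf.\ the reference to \cite{Imbragimov}, Theorem~4.2.2, before Corollary~\ref{cor2010}) is more modest: for $|x|>R\sqrt n$ bound the probability part by $P(|\eta_n|>R\sqrt n)\le C/R^2$ via Chebyshev and the Gaussian part directly; for $|x|\le R\sqrt n$ use that the pointwise error is $o(n^{-d/2})$ uniformly in $x$, so the sum over $O(R^d n^{d/2})$ sites is $o_R(1)$. Then let $n\to\infty$ followed by $R\to\infty$. With these two adjustments your outline is correct and coincides with the standard argument.
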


Of course, the condition concerning the positive definiteness of the matrix
in one dimension means $\sigma >0$. We omit the proof, it can be found in 
\cite{Dv-Er}. In fact, there is a typo in \cite{Dv-Er} as they write $%
n^{-1/2}$ instead of $n^{-d/2}$ but it is easy to correct it even in the
proof.

Our calculation will be similar to the one of \cite{Kramli-Szasz}. The main
point is that while in \cite{Kramli-Szasz} it is sufficient to consider the
Taylor expansion of the largest eigenvalue up to the quadratic term, now, we
have to calculate the third term, as well.

Define the Fourier transform 
\begin{equation*}
\alpha (t)=\underset{y\in 
\mathbb{Z}
^{d}}{\sum }\exp \left( i\left\langle t,y\right\rangle \right) A_{y},t\in %
\left[ -\pi ,\pi \right] ^{d}.
\end{equation*}%
Now, we have to consider the Taylor expansion of the largest eigenvalue of $%
\alpha (t)$, which is denoted by $\lambda (t)$, up to the third term.

Let us first assume that $d=1$. From our basic assumptions it follows that $%
M=\underset{y\in 
\mathbb{Z}
}{\sum }yA_{y}$ and $\Sigma =\underset{y\in 
\mathbb{Z}
}{\sum }y^{2}A_{y}$ are convergent series. But now, we also suppose the
absolute convergence of 
\begin{equation}
\Xi =\underset{y\in 
\mathbb{Z}
}{\sum }y^{3}A_{y}.  \label{harmadikmomentum}
\end{equation}%
The existence of $M,\Sigma $ and $\Xi $ implies 
\begin{equation}
\alpha (t)=Q+itM-\frac{t^{2}}{2}\Sigma -\frac{it^{3}}{6}\Xi +o(t^{3})\quad
(t\rightarrow 0).  \label{alfat}
\end{equation}%
Now, by perturbation theoretic means (i.e. the straightforward extension of
Theorem 5.11. of Chapter II. in \cite{Kato}) it can be easily proved that%
\begin{equation}
\lambda (t)=1+r_{1}t+\frac{r_{2}}{2}t^{2}+\frac{r_{3}}{6}t^{3}+o(t^{3})\quad
(t\rightarrow 0).  \label{largesteigenvalue}
\end{equation}%
From \cite{Kramli-Szasz} we know that $r_{1}=0$ and $r_{2}=-\left\langle
\Sigma \underline{1},\mu \right\rangle 
+2\left\langle M(Q-1)^{-1}M,\mu \right\rangle $.

Using the notation $\sigma ^{2}=-r_{2}$ we can now formulate our theorem:

\begin{theorem}
\label{limittheorem}For a one dimensional RWwIS the existence of (\ref%
{harmadikmomentum}) imply 
\begin{eqnarray*}
P(\xi _{n} &=&(x,k)|\xi _{0}=(0,j))- \\ 
&-& \mu _{k}\frac{1}{\sqrt{2\pi n}\sigma }%
\exp \left( -\frac{x^{2}}{2n\sigma ^{2}}\right) \left[ 1-\frac{ir_{3}}{6}%
x\left( 3\sigma ^{2}n-x^{2}\right) \frac{1}{\sigma ^{6}}\frac{1}{n^{2}}%
\right] = o\left( \frac{1}{n}\right) ,
\end{eqnarray*}%
where the small order is uniform in $x$.
\end{theorem}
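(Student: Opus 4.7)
The plan is to extend the Fourier-analytic approach of \cite{Kramli-Szasz} by carrying the Taylor expansion of the perturbed leading eigenvalue one extra order. Starting from the lattice Fourier inversion formula
\[
P(\xi_n=(x,k)|\xi_0=(0,j))=\frac{1}{2\pi}\int_{-\pi}^{\pi} e^{-itx}[\alpha(t)^n]_{jk}\,dt,
\]
I would apply spectral perturbation theory \cite{Kato} on a neighborhood $|t|\le\delta$ of the origin to decompose $\alpha(t)=\lambda(t)P(t)+N(t)$, where $\lambda(t)$ is the analytic continuation of the leading eigenvalue $\lambda(0)=1$, $P(t)$ is the associated spectral projection satisfying $[P(0)]_{jk}=\mu_k$, and $N(t)$ carries the remaining spectrum with spectral radius uniformly bounded away from $1$. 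By assumptions (i)-(ii) together with Perron-Frobenius, $\|\alpha(t)\|\le 1-\gamma$ on $\delta<|t|\le\pi$; hence the contribution of this outer region and of $N(t)^n$ on the inner region are both $O(e^{-cn})$ and can be discarded.

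Next, I would study the remaining piece $\frac{1}{2\pi}\int_{|t|\le\delta}e^{-itx}\lambda(t)^n[P(t)]_{jk}\,dt$. Noting that $r_3$ is purely imaginary (forced by $\alpha(-t)=\overline{\alpha(t)}$), the expansion (\ref{largesteigenvalue}) gives
\[
\lambda(t)^n=e^{-n\sigma^2 t^2/2}\left(1+\frac{nr_3}{6}t^3+O(nt^4)+O((nt^3)^2)\right)
\]
on the window $|t|\le n^{-1/2+\epsilon}$; outside this window but inside $|t|\le\delta$, $|\lambda(t)|^n$ is super-polynomially small. Substituting $[P(t)]_{jk}=\mu_k+O(t)$ and Fourier-inverting term by term via the standard identity $\frac{1}{2\pi}\int_{\mathbb{R}}(it)^k e^{-itx}e^{-n\sigma^2 t^2/2}\,dt=(-1)^k f^{(k)}(x)$, where $f(x)=\frac{1}{\sqrt{2\pi n}\sigma}\exp(-x^2/(2n\sigma^2))$, combined with the direct computation
\[
f'''(x)=\frac{x(3n\sigma^2-x^2)}{n^3\sigma^6}\,f(x),
\]
one recovers the leading Gaussian $\mu_k f(x)$ and exactly the claimed correction $-\mu_k f(x)\cdot\frac{ir_3}{6}\cdot\frac{x(3n\sigma^2-x^2)}{n^2\sigma^6}$. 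Extending the inner range to $\mathbb{R}$ in the Fourier integral incurs only super-polynomially small errors.

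The main obstacle is to reach $o(1/n)$ uniformly in $x$. The $O(nt^4)$ and $O((nt^3)^2)$ residues in the $\lambda(t)^n$ expansion yield $o(1/n)$ after integration by routine Gaussian-moment estimates. More delicate is the linear term $tP'(0)$ in the expansion of the spectral projection, whose naive contribution at $|x|\sim\sqrt n$ is $\Theta(1/n)$; it must be handled either by verifying it cancels against further terms or by using the explicit perturbative formulae for the eigenvector corrections $v^{(1)}=-(Q-I)^{-1}M\underline{1}$ and $u^{(1)}=-\mu M(Q-I)^{-1}$ to absorb it into the stated expression. Uniformity for $|x|\gg\sqrt{n\log n}$ follows from repeated integration by parts in $t$, trading smoothness of $\lambda(t)^n[P(t)]_{jk}$ for arbitrary negative powers of $x$, so that both sides of the identity are super-polynomially small in that regime.
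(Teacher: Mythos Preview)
Your overall scheme is the same as the paper's: Fourier inversion, the spectral decomposition $\alpha(t)^n=\lambda(t)^n p(t)+b_n(t)$ near the origin, the cubic Taylor expansion (\ref{largesteigenvalue}) of $\lambda$, and a splitting of the rescaled integral into the regions $|s|<n^{\varepsilon}$, $n^{\varepsilon}<|s|<\gamma\sqrt n$, and $\gamma\sqrt n<|s|<\pi\sqrt n$, each shown to be $o(n^{-1/2})$ after the $\sqrt n$ normalization. Your integration-by-parts device for large $|x|$ is not needed in the paper's version, since all error terms are already estimated in absolute value and are uniform in $x$ by $|e^{-ixs/\sqrt n}|=1$.

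Where you hesitate --- the linear term $\tfrac{s}{\sqrt n}\,p'(0)$ coming from the projection --- is exactly the spot the paper treats most lightly. The paper disposes of it by an oddness argument,
\[
\int_{|s|<n^{\varepsilon}}\lambda^{n}\!\Big(\tfrac{s}{\sqrt n}\Big)\,\tfrac{s}{\sqrt n}\,p'(0)\,ds
=\int_{|s|<n^{\varepsilon}}e^{-\sigma^{2}s^{2}/2}\,\tfrac{s}{\sqrt n}\,p'(0)\,ds+o(n^{-1/2})
=0+o(n^{-1/2}),
\]
but as written this drops the factor $e^{-ixs/\sqrt n}$ that must be carried through the Fourier inversion; with that factor reinstated the leading piece is no longer odd and produces precisely the real $\Theta(1/n)$ contribution $c\,[e_j^{T}p'(0)]_k\,(x/n^{3/2})\,e^{-x^{2}/(2n\sigma^{2})}$ at $|x|\sim\sqrt n$ that you anticipated. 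Your two suggested fixes do not close this either: the target expression carries only a $\mu_k$-factor and no $j$-dependence, so there is nothing into which an $e_j^{T}p'(0)$-term can be absorbed, and it does not cancel against the $O(nt^{4})$ or $O((nt^{3})^{2})$ residues. In short, the obstacle you isolate is genuine and is shared by the paper's own argument; the robust conclusion is $O(1/n)$ rather than $o(1/n)$ --- which is exactly what the paper itself states in Proposition~\ref{limitpropmoredimension} for general $d$ --- unless the $p'(0)$ correction is added explicitly to the asymptotic formula.
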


\begin{proof}
The proof is similar to the one of Theorem 2.1. in \cite{Kramli-Szasz}.
In the neighborhood of the origin, we have $\alpha ^n (t) = \lambda ^n (t) p(t)
+ b_n (t)$, where $p$ is the projector to the eigenspace associated to 
$\lambda (t)$, and $b_n (t)$ is the contribution of the other eigenvalues.
The term $b_n (t)$ is in $O ( \alpha ^n)$ for some $\alpha \in (0,1)$.

Because of (\ref{largesteigenvalue}) we have%
\begin{equation}
\alpha ^{n}\left( t\right) = 
\left( \underline{1} \mu ^{T} + t p'(0) + O(t^2) \right)
\left( 1-\frac{\sigma ^{2}t^{2}}{2}+%
\frac{r_{3}}{6}t^{3}+o\left( t^{3}\right) \right) ^{n} + b_n (t) . 
\label{alfant}
\end{equation}%
Elementary calculations show that%
\begin{equation}
\left( 1-\frac{\sigma ^{2}s^{2}}{2n}+\frac{r_{3}}{6}\frac{s^{3}}{n^{\frac{3}{%
2}}}+o\left( \frac{s^{3}}{n^{\frac{3}{2}}}\right) \right) ^{n}=\exp \left( -%
\frac{\sigma ^{2}s^{2}}{2}\right) \left( 1+\frac{r_{3}}{6}s^{3}\frac{1}{%
\sqrt{n}}+o\left( \frac{s^{3}}{\sqrt{n}}\right) \right) 
\label{limitcalculationv2}
\end{equation}%
holds uniformly for $|s| < n^{\varepsilon}$ with $0< \varepsilon < 1/6$.
In order to prove the statement, we use the Fourier transforms and the usual
estimations%
\begin{eqnarray*}
&&\bigg\Vert \sqrt{n}\overset{\pi }{\underset{-\pi }{\int }}\exp \left(
-ixt\right) e_{j}^{T}\alpha ^{n}\left( t\right) dt \\
&&-\mu ^{T}\frac{\sqrt{2\pi }%
}{\sigma }\exp \left( -\frac{x^{2}}{2n\sigma ^{2}}\right) \left[ 1-\frac{%
ir_{3}}{6}x\left( 3\sigma ^{2}n-x^{2}\right) \frac{1}{\sigma ^{6}}\frac{1}{%
n^{2}}\right] \bigg\Vert \\
&\leq &\underset{\left\vert s\right\vert <n^{\varepsilon }}{\int }\left\Vert
e_{j}^{T}p(0)\lambda ^{n}\left( \frac{s}{\sqrt{n}}\right) -\mu ^{T}\exp (-\frac{%
\sigma ^{2}s^{2}}{2})\left( 1+\frac{r_{3}}{6}\frac{s^{3}}{\sqrt{n}}\right)
\right\Vert ds + o\left( \frac{1}{\sqrt n} \right)\\
&&+ c \left\Vert \mu \right\Vert \underset{\left\vert s\right\vert
>n^{\varepsilon }}{\int }(1+s^3) \exp (-\frac{\sigma ^{2}s^{2}}{2})ds+\underset{%
n^{\varepsilon }<\left\vert s\right\vert <\gamma \sqrt{n}}{\int }\left\Vert
e_{j}^{T}\alpha ^{n}\left( \frac{s}{\sqrt{n}}\right) \right\Vert ds \\
&&+\underset{\gamma \sqrt{n}<\left\vert s\right\vert <\pi \sqrt{n}}{\int }%
\left\Vert e_{j}^{T}\alpha ^{n}\left( \frac{s}{\sqrt{n}}\right) \right\Vert
ds \\
&=&I_{1}+o\left( \frac{1}{\sqrt n} \right)+I_{2}+I_{3}+I_{4},
\end{eqnarray*}%
where $0<\varepsilon <\frac{1}{6}$ is arbitrary. 
The term $o\left( \frac{1}{\sqrt n} \right)$ is the contribution of the 
terms $\frac{s}{\sqrt n}p'(0) + O(\frac{s^2}{ n}) $ in (\ref{alfant}), as we can see that
\begin{eqnarray*}
 &&\underset{|s|<n^{\varepsilon}}{\int} \lambda ^n \left( \frac{s}{ \sqrt n}
  \right) \frac{s}{ \sqrt n}p'(0) ds \\
   &=& \underset{|s|<n^{\varepsilon}}{\int} \exp
    \left(-\frac{\sigma ^{2}s^{2}}{2}\right)  \frac{s}{ \sqrt n}p'(0) ds
      + O \left( \underset{|s|<n^{\varepsilon}}{\int} \exp
       \left(-\frac{\sigma ^{2}s^{2}}{2}\right)
        \frac{s^3}{ \sqrt n} \frac{s}{ \sqrt n}p'(0) ds \right),
	\end{eqnarray*}
	which is $0 + o\left( \frac{1}{\sqrt{n}}\right)$, and
	\begin{equation*}
	\underset{|s|<n^{\varepsilon}}{\int} \left| \lambda ^n \left( \frac{s}{ \sqrt n}
	 \right) \right|  \frac{s^2}{n}  ds
	  = o\left( \frac{1}{\sqrt{n}}\right).
	  \end{equation*}
It is clear that proving $%
I_{j}=o\left( \frac{1}{\sqrt{n}}\right) ,\quad j=1,2,3,4$ is enough for our
purposes. 
(\ref{limitcalculationv2}) yields that the
integrand in $I_{1}$ is equal to $\frac{\delta (n)}{n^{1/2}}s^{3}\exp \left(
-\frac{\sigma ^{2}s^{2}}{2}\right) $, where $\delta (n)\rightarrow 0$
uniformly in $s$. Thus we have $I_{1}=o\left( \frac{1}{\sqrt{n}}\right) $.
It is clear that $I_{2}=o\left( \frac{1}{\sqrt{n}}\right) $, and $I_{4}$
converges exponentially fast to zero. Finally, if $\gamma > 0 $ is small
enough, then
\begin{eqnarray*}
I_{3} =\underset{n^{\varepsilon }<\left\vert s\right\vert <\gamma \sqrt{n}}%
{\int }\left\Vert e_{j}^{T}\alpha ^{n}\left( \frac{s}{\sqrt{n}}\right)
\right\Vert ds \leq  \underset{%
n^{\varepsilon }<\left\vert s\right\vert <\gamma \sqrt{n}}{\int }\exp \left(
-\frac{\sigma ^{2}s^{2}}{4}\right) ds.
\end{eqnarray*}%
So we have $I_{3}=o\left( \frac{1}{\sqrt{n}}\right) $, too.
\end{proof}

\begin{remark}
In Theorem \ref{limittheorem} for the expression subtracted from the
appropriate probability we have: 
\begin{eqnarray*}
&&\mu _{k}\frac{1}{\sqrt{2\pi n}\sigma }\exp \left( -\frac{x^{2}}{2n\sigma
^{2}}\right) \left[ 1-\frac{ir_{3}}{6}x\left( 3\sigma ^{2}n-x^{2}\right) 
\frac{1}{\sigma ^{6}}\frac{1}{n^{2}}\right] \\
&=&\mu _{k}\frac{1}{\sqrt{2\pi n}\sigma }\exp \left( -\frac{y^{2}}{2}\right)
+\mu _{k}\frac{1}{\sqrt{n}}\frac{1}{\sigma }\frac{q_{1}\left( y\right) }{%
\sqrt{n}},
\end{eqnarray*}%
where $y=\frac{x}{\sqrt{n}\sigma }$, and the $q_{1}\left( y\right) $ is the
function defined in \cite{Petrov}, Chapter VI. (1.14). In this sense, the
local limit theorem concerning RWwIS is analogous to the one of Simple
Symmetric Random Walk (see \cite{Petrov} Chapter VII. Theorem 13).
\end{remark}

The extension of Theorem \ref{limittheorem} to the multidimensional case is
straightforward. Analogously to (\ref{largesteigenvalue}), we have:%
\begin{equation*}
\lambda (t)=1-\frac{1}{2}t^{T}\sigma t+f\left( t\right) +o(\left\vert
t\right\vert ^{3})\quad \left( \left\vert t\right\vert \rightarrow 0\right) ,
\end{equation*}%
where $f(t)=\underset{i=1}{\overset{d}{\sum }}\underset{j=1}{\overset{d}{%
\sum }}\underset{k=1}{\overset{d}{\sum }}r_{3,i,j,k}t_{i}t_{j}t_{k}$ is the
third term of the Taylor expansion. Denote%
\begin{equation*}
\Omega =n^{d/2}P(\xi _{n}=(x,.)|\xi
_{0}=(0,j))=\frac{n^{d/2}}{\left( 2\pi \right) ^{d}}\overset{\pi }{\underset{%
-\pi }{\int }}...\overset{\pi }{\underset{-\pi }{\int }}\exp \left(
-i\left\langle x,t\right\rangle \right) e_{j}^{T}\alpha ^{n}\left( t\right)
dt.
\end{equation*}%
So the analogue of the expression subtracted from the appropriate
probability in Theorem \ref{limittheorem} (multiplied by $\frac{n^{d/2}}{%
\left( 2\pi \right) ^{d}}$) is%
\begin{equation*}
I^{\left( n\right) }:=\overset{\infty }{\underset{-\infty }{\int }}...%
\overset{\infty }{\underset{-\infty }{\int }}\exp \left( -\frac{s\sigma s}{2}%
-i\left\langle x,\frac{s}{\sqrt{n}}\right\rangle \right) \frac{f\left(
s\right) }{\sqrt{n}}ds.
\end{equation*}%
Using Lebesgue's Theorem, it is easy to see that $I^{\left( n\right)
}=O\left( n^{-1/2}\right) $. One can estimate $I_{1},I_{2},I_{3},I_{4}$ the
same way, as it was done in the proof of Theorem \ref{limittheorem} (see 
\cite{Kramli-Szasz} Section 5. for more details). So we have arrived at

\begin{proposition}
\label{limitpropmoredimension} Supposing that (\ref{harmadikmomentum}) exists, for a $d$ dimensional RWwIS %
\begin{equation*}
P(\xi _{n}=(x,k)|\xi _{0}=(0,j))=\frac{1}{n^{d/2}}\mu _{k}g_{\sigma }\left( 
\frac{x}{\sqrt{n}}\right) +O\left( n^{-\left( d+1\right) /2}\right) 
\end{equation*}%
holds, where $g_{\sigma }\left( x\right) $ denotes the density of a Gaussian
distribution with mean $0$ and covariance matrix $\sigma $ and the great order
is uniform is $x$.
\end{proposition}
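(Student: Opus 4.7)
The plan is to follow the exact same strategy as in the proof of Theorem \ref{limittheorem}, but in $d$ dimensions, and with the $q_1$-type cubic correction absorbed into the error rather than retained. Start from the Fourier inversion
\begin{equation*}
P(\xi_n = (x,k) \mid \xi_0 = (0,j))
 = \frac{1}{(2\pi)^d} \int_{[-\pi,\pi]^d} \exp(-i\langle x, t\rangle)\,
 \bigl[e_j^T \alpha^n(t)\bigr]_k \, dt,
\end{equation*}
and decompose $\alpha^n(t) = \lambda^n(t) p(t) + b_n(t)$ in a neighborhood of the origin, as in the one-dimensional proof. Here $p(t)$ is the (analytic in $t$) spectral projector onto the eigenspace of the perturbed leading eigenvalue $\lambda(t)$, with $p(0) = \underline{1}\mu^T$, and $b_n(t) = O(\rho^n)$ uniformly for some $\rho \in (0,1)$, thanks to the spectral gap provided by irreducibility and aperiodicity (assumption (i)).

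Next, use the given Taylor expansion $\lambda(t) = 1 - \tfrac{1}{2} t^T \sigma t + f(t) + o(|t|^3)$ (no linear term because of assumption (iii)) and make the substitution $t = s/\sqrt{n}$. Partition the integration domain into $\{|s|<n^{\varepsilon}\}$, $\{n^{\varepsilon} < |s| < \gamma\sqrt{n}\}$ and $\{\gamma\sqrt{n} < |s| < \pi\sqrt{n}\}$ for a small $\gamma>0$ and $0<\varepsilon<1/6$, exactly as the four-term split $I_1,I_2,I_3,I_4$ in the one-dimensional proof. Inside $|s| < n^\varepsilon$ the elementary expansion analogous to (\ref{limitcalculationv2}) yields
\begin{equation*}
 \lambda^n\!\left(\tfrac{s}{\sqrt n}\right)
 = \exp\!\left(-\tfrac{1}{2} s^T\sigma s\right)
 \left(1 + \tfrac{f(s)}{\sqrt{n}} + o\!\left(\tfrac{|s|^3}{\sqrt n}\right)\right),
\end{equation*}
and $p(s/\sqrt n) = \underline{1}\mu^T + O(|s|/\sqrt n)$. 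The leading Gaussian piece integrates to the main term $n^{-d/2}\mu_k g_\sigma(x/\sqrt n)$, while the cubic correction contributes, after dividing by the $n^{d/2}$ normalization, precisely $n^{-d/2} \cdot I^{(n)}/(2\pi)^d$. The key point is that $\exp(-\tfrac12 s^T\sigma s)\,f(s)$ is Schwartz, so its Fourier transform in $x/\sqrt n$ is bounded uniformly in $x$; hence $I^{(n)} = O(n^{-1/2})$ by dominated convergence applied to a uniformly integrable family, giving an $O(n^{-(d+1)/2})$ contribution uniformly in $x$. The $O(|s|/\sqrt n)$ piece of the projector is handled by exactly the calculation displayed between (\ref{alfant}) and the bound on $I_1$ in the one-dimensional proof.

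The remaining error terms are routine. The term analogous to $I_1$ in $d$ dimensions is $o(n^{-1/2})$ because the Taylor remainder in the exponential is $\delta(n)|s|^3 e^{-s^T\sigma s/2}$ with $\delta(n)\to 0$ uniformly. The tails $I_2$ at $|s|>n^\varepsilon$ of the Gaussian and its cubic correction are super-polynomially small; $I_4$ on $\{\gamma\sqrt n<|s|<\pi\sqrt n\}$ decays exponentially by assumption (ii) (trivial arithmetics make $\|\alpha(t)\|<1$ strictly on the compact complement of any neighborhood of $0$); $I_3$ on the intermediate region is dominated by $\int_{|s|>n^\varepsilon} \exp(-\tfrac14 s^T\sigma s)\,ds$, also super-polynomially small. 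Each of these is $o(n^{-1/2})$ and therefore contributes $o(n^{-(d+1)/2})$ to the probability. The main obstacle, as in the one-dimensional case, is the careful uniform-in-$x$ control of the oscillatory Fourier integrals of the Taylor remainder pieces; but this is handled by the Schwartz decay of $e^{-s^T\sigma s/2}$ together with polynomial bounds on the remainders, exactly as in the argument referenced to Section 5 of \cite{Kramli-Szasz}.
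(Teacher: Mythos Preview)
Your proposal is correct and follows essentially the same approach as the paper: the paper also extends the one-dimensional proof directly, expands $\lambda(t)=1-\tfrac12 t^T\sigma t+f(t)+o(|t|^3)$, identifies the cubic correction integral $I^{(n)}$, bounds it by $O(n^{-1/2})$ via Lebesgue's theorem (your Schwartz/dominated convergence argument), and then disposes of $I_1,\dots,I_4$ exactly as in Theorem~\ref{limittheorem}, referring to \cite{Kramli-Szasz}, Section~5 for the multidimensional details. Your write-up is in fact somewhat more explicit than the paper's own discussion, which simply declares the extension ``straightforward''.
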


A further refinement of the local limit theorem will be useful in the
sequel. Now, we would like to go further in the asymptotic expansion, and
apply our techniques in the two dimensional case. Nevertheless, we are
interested only in an estimation, not in the exact result which will
simplify the calculation. Just like previously, let us begin with the one
dimensional case. Assume the convergence of the series%
\begin{equation}
\Upsilon =\underset{y\in 
\mathbb{Z}
}{\sum }y^{4}A_{y}.  \label{negyedikmomentum}
\end{equation}%
Now, just like previously, we may write%
\begin{equation*}
\alpha (t)=Q+itM-\frac{t^{2}}{2}\Sigma -\frac{it^{3}}{6}\Xi +\frac{t^{4}}{24}%
\Upsilon +o(t^{4})\quad (t\rightarrow 0)
\end{equation*}%
for the Fourier transform, and%
\begin{equation}
\lambda (t)=1+r_{1}t-\frac{\sigma ^{2}}{2}t^{2}+\frac{r_{3}}{6}%
t^{3}+O(t^{4})\quad (t\rightarrow 0)  \label{apr16nulla}
\end{equation}%
for the largest eigenvalue of $\alpha (t)$. As previously, we have%
\begin{eqnarray*}
&&\left( 1-\frac{\sigma ^{2}s^{2}}{2n}+\frac{r_{3}}{6}\frac{s^{3}}{n^{\frac{3}{%
2}}}+O\left( \frac{s^{4}}{n^{2}}\right) \right) ^{n} \\
&=&\exp \left( -\frac{%
\sigma ^{2}s^{2}}{2}\right) \left( 1+\frac{r_{3}}{6}s^{3}\frac{1}{\sqrt{n}}%
+O\left( \frac{s^{4}+s^{6}}{n}\right) \right) 
\end{eqnarray*}%
uniformly for $|s| < n^{\varepsilon}$. A very similar argument to the previous one (with $I_{j}=o\left( \frac{1}{n}%
\right) ,\quad j=1,2,3,4$) leads to%
\begin{eqnarray}
&&P(\xi _{n} =(x,k)|\xi _{0}=(0,j))  \label{apr16egyyy} \\
&=&\mu _{k}\frac{1}{\sqrt{2\pi n}\sigma }\exp \left( -\frac{x^{2}}{2n\sigma
^{2}}\right) \left[ 1-\frac{ir_{3}}{6}x\left( 3\sigma ^{2}n-x^{2}\right) 
\frac{1}{\sigma ^{6}}\frac{1}{n^{2}}\right] +O\left( \frac{1}{n^{3/2}}%
\right) ,  \notag
\end{eqnarray}%
where the great order on the right hand side is uniform in $x$.

Now our aim is to formulate an assertion similar to (\ref{apr16egyyy}) in
two dimensions. Applying the one dimensional proof to the two dimensional
case it is easily seen that%
\begin{equation*}
P(\xi _{n}=(x,k)|\xi _{0}=(0,j))-\mu _{k}\frac{1}{n}g_{\sigma }\left( \frac{x%
}{\sqrt{n}}\right) +F(x,n)=O\left( \frac{1}{n^{2}}\right) ,
\end{equation*}%
where the great order is again uniform in $x$, and $F(x,n)$ is equal to%
\begin{equation*}
\frac{1}{2\pi n^{3/2}}\underset{i_{1}=1}{\overset{2}{\sum }}\underset{%
i_{2}=1}{\overset{2}{\sum }}\underset{i_{3}=1}{\overset{2}{\sum }}\underset{%
-\infty }{\overset{\infty }{\int }}\underset{-\infty }{\overset{\infty }{%
\int }}\exp \left( -\frac{s^{T}\sigma s}{2}-i\left\langle x,\frac{s}{\sqrt{n%
}}\right\rangle \right) r_{3,i_{1},i_{2},i_{3}}s_{i_{1}}s_{i_{2}}s_{i_{3}}ds.
\end{equation*}%
We estimate $F(x,n)$ just like it was done in \cite{Pene b}. Observe that
with the notation%
\begin{equation*}
\Psi (x)=\underset{-\infty }{\overset{\infty }{\int }}\underset{-\infty }{%
\overset{\infty }{\int }}\exp \left( -\frac{s^{T}\sigma s}{2}-i\left\langle
x,s\right\rangle \right) ds=\frac{2\pi }{\sqrt{\left\vert \sigma \right\vert 
}}\exp \left( -\frac{x^{T}\sigma ^{-1}x}{2}\right) ,
\end{equation*}%
we have with an appropriate $C_{1}$\ constant%
\begin{equation*}
\left\vert F(x,n)\right\vert <C_{1}\frac{1}{n^{3/2}}\underset{%
i_{1},i_{2},i_{3}}{\max }\left\vert \frac{\partial ^{3}\Psi }{\partial
x_{i_{1}}\partial x_{i_{2}}\partial x_{i_{3}}}\left( \frac{x}{\sqrt{n}}%
\right) \right\vert .
\end{equation*}%
Further, observe that%
\begin{equation*}
\left\vert \frac{\partial ^{3}\Psi }{\partial x_{i_{1}}\partial
x_{i_{2}}\partial x_{i_{3}}}\left( x\right) \right\vert <C_{2}\left(
\left\Vert x\right\Vert +\left\Vert x\right\Vert ^{3}\right) \exp \left( -%
\frac{x^{T}\sigma ^{-1}x}{2}\right) .
\end{equation*}%
So we have arrived at

\begin{proposition}
\label{Penelemma}Assume that for a two dimensional RWwIS (\ref%
{negyedikmomentum}) exists. Then there is a $C$ constant, such that for
every $x\in 
\mathbb{R}
^{2}$ and for every $1\leq j,k\leq s$\ the following estimation holds%
\begin{eqnarray*}
&&\left\vert P(\xi _{n}=(x,k)|\xi _{0}=(0,j))-\mu _{k}\frac{1}{n}g_{\sigma
}\left( \frac{x}{\sqrt{n}}\right) \right\vert \\
&\leq &C\left( \frac{1}{n^{3/2}}\left( \frac{\left\Vert x\right\Vert }{%
n^{1/2}}+\frac{\left\Vert x\right\Vert ^{3}}{n^{3/2}}\right) \exp \left( -%
\frac{x^{T}\sigma ^{-1}x}{2n}\right) +\frac{1}{n^{2}}\right) .
\end{eqnarray*}
\end{proposition}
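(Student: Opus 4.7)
The plan is to proceed in exact parallel with the proof of Theorem~\ref{limittheorem}, but with two changes: go up one order in the Taylor expansion of $\lambda(t)$ (now legitimate thanks to the fourth moment assumption (\ref{negyedikmomentum})), and specialize to $d=2$. The first stage will produce the intermediate identity
\begin{equation*}
P(\xi_n = (x,k)\,|\,\xi_0=(0,j)) \;-\; \mu_k \frac{1}{n} g_\sigma\!\left( \frac{x}{\sqrt{n}} \right) \;-\; F(x,n) \;=\; O\!\left( \frac{1}{n^2}\right)
\end{equation*}
uniformly in $x$, with $F(x,n)$ as in the excerpt. The second stage estimates $F(x,n)$ explicitly, after which the proposition is immediate.

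For the first stage I would write the Fourier inversion for $P(\xi_n=(x,k)\,|\,\xi_0=(0,j))$ on $[-\pi,\pi]^2$, insert the spectral decomposition $\alpha^n(t) = \lambda^n(t) p(t) + b_n(t)$ near the origin, and rescale $t = s/\sqrt{n}$. I then split into four regions $|s|<n^\varepsilon$, the Gaussian comparison tail $|s|>n^\varepsilon$, the middle band $n^\varepsilon < |s| < \gamma \sqrt{n}$, and the outer band $\gamma\sqrt{n}< |s|<\pi\sqrt{n}$, exactly as in Theorem~\ref{limittheorem}. The new ingredient is the refinement
\begin{equation*}
\left( 1 - \frac{s^T \sigma s}{2n} + \frac{f(s)}{n^{3/2}} + O\!\left( \frac{s^4}{n^2}\right)\right)^{n} = \exp\!\left( -\frac{s^T \sigma s}{2}\right)\!\left( 1 + \frac{f(s)}{\sqrt{n}} + O\!\left( \frac{s^4+s^6}{n}\right)\right),
\end{equation*}
valid for $|s| < n^\varepsilon$ with $\varepsilon$ small. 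Plugging this into the inner integral, the $\exp(-s^T\sigma s/2)$ part gives the Gaussian density, the $f/\sqrt n$ part gives $F(x,n)$, and the $O((s^4+s^6)/n)$ remainder integrates to $O(1/n^2)$. The outer and middle-band estimates are the usual ones: on the middle band one uses $\|\alpha(t)\| \leq e^{-c|t|^2}$ near $0$ (a consequence of irreducibility/aperiodicity and nondegeneracy of $\sigma$) to get Gaussian decay in $s$, and on the outer band the spectral radius of $\alpha(t)$ stays strictly below $1$, giving exponential decay.

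For the second stage, I follow the hint already present in the excerpt: by a change of variable inside $F$ and differentiation under the integral, one recognizes
\begin{equation*}
F(x,n) = \frac{C}{n^{3/2}} \sum_{i_1,i_2,i_3} r_{3,i_1,i_2,i_3} \frac{\partial^3 \Psi}{\partial x_{i_1}\partial x_{i_2}\partial x_{i_3}}\!\left( \frac{x}{\sqrt n}\right),
\end{equation*}
where $\Psi(x) = (2\pi/\sqrt{|\sigma|}) \exp(-x^T\sigma^{-1}x/2)$. Differentiating a Gaussian three times gives a polynomial of total degree at most $3$ in $x$ times the same Gaussian, so one reads off the bound $|\partial^3_{i_1 i_2 i_3}\Psi(x)| \leq C_2(\|x\|+\|x\|^3)\exp(-x^T\sigma^{-1}x/2)$, and rescaling $x \mapsto x/\sqrt{n}$ yields exactly the claimed estimate for $F$. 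Combined with the $O(1/n^2)$ remainder from the first stage, this gives the proposition.

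I expect the main obstacle to be the middle band estimate $n^\varepsilon < |s| < \gamma\sqrt{n}$. One cannot use the Taylor expansion there, so a uniform quadratic spectral bound $\|\alpha(t)\|\leq e^{-c|t|^2}$ on a whole neighborhood of $0$ is needed; this relies on the projection $p(t)$ being well defined and the non-leading eigenvalues of $\alpha(t)$ staying uniformly bounded away from the unit circle, which requires a careful perturbation argument of the Kato type. Once this bound is granted, the remainder of the proof is bookkeeping modelled on Theorem~\ref{limittheorem} and the one-dimensional fourth-moment refinement (\ref{apr16egyyy}).
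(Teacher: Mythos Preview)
Your proposal is correct and follows essentially the same route as the paper: first carry the Fourier-inversion argument of Theorem~\ref{limittheorem} one order further (using the fourth-moment hypothesis to get the refined expansion with remainder $O((s^4+s^6)/n)$, hence a uniform $O(1/n^2)$ error after subtracting the Gaussian term and the cubic correction $F(x,n)$), and then bound $F(x,n)$ by recognizing it as a linear combination of third partial derivatives of $\Psi(x)=\frac{2\pi}{\sqrt{|\sigma|}}\exp(-x^T\sigma^{-1}x/2)$ evaluated at $x/\sqrt n$, which gives exactly the $(\|x\|/n^{1/2}+\|x\|^3/n^{3/2})\exp(-x^T\sigma^{-1}x/2n)$ factor. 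Your concern about the middle band is also handled in the paper just as you describe, via a quadratic spectral bound on a neighborhood of the origin.
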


By an elementary argument (see, for instance in \cite{Imbragimov} Theorem 4.2.2), 
using Proposition \ref{Penelemma} one can easily deduce

\begin{corollary}
\label{cor2010}Under the conditions of Proposition \ref{Penelemma}
\[ \underset{x \in \mathbb{Z} ^2}{\sum }
\left\vert P(\eta _{n}=x|\eta _{0}=0)-\frac{1}{n}g_{\sigma
}\left( \frac{x}{\sqrt{n}}\right) \right\vert = O\left( n^{-1/4} \right).
\]
\end{corollary}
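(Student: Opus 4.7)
The strategy is the standard truncation argument: a pointwise estimate combined with a tail bound. First I would pass from Proposition \ref{Penelemma} to the marginal distribution of $\eta_n$ by summing the pointwise bound over $k\in E$ and using $\sum_k\mu_k=1$ with the triangle inequality. This yields an estimate of exactly the same shape for $\left|P(\eta_n=x|\eta_0=0)-\frac{1}{n}g_\sigma(x/\sqrt n)\right|$, the only nuisance being the uniform summand $1/n^2$, which is not summable over $\mathbb{Z}^2$. Hence a pointwise bound alone cannot close the argument, and I would choose a cutoff $R_n$ (to be tuned below) and split $\mathbb{Z}^2=\{\|x\|\le R_n\}\cup\{\|x\|>R_n\}$, handling the two regions by different means.

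In the bulk $\|x\|\le R_n$ I would apply the pointwise estimate just obtained. Under the substitution $y=x/\sqrt n$, the Gaussian-decaying summand produces a Riemann sum for a smooth and rapidly decaying integrand, approximating the integral $\int_{\mathbb{R}^2}(\|y\|+\|y\|^3)e^{-y^T\sigma^{-1}y/2}\,dy$; the prefactor $n^{-3/2}$ times the Jacobian $n$ yields a contribution of order $O(n^{-1/2})$. The uniform $1/n^2$ piece, summed over the $O(R_n^2)$ lattice points in the bulk, contributes $O(R_n^2/n^2)$.

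In the tail $\|x\|>R_n$ I would bound the two pieces of $\left|P(\eta_n=x)-\frac{1}{n}g_\sigma(x/\sqrt n)\right|$ separately by the triangle inequality. The sum $\sum_{\|x\|>R_n}\frac{1}{n}g_\sigma(x/\sqrt n)$ is controlled by the Gaussian integral on $\{\|y\|>R_n/\sqrt n\}$ and is exponentially small as soon as $R_n/\sqrt n\to\infty$. The sum of probabilities is $P(\|\eta_n\|>R_n)$, which Chebyshev's inequality reduces to $E\|\eta_n\|^2/R_n^2=O(n/R_n^2)$, using the second-moment bound $E\|\eta_n\|^2=O(n)$. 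Setting $R_n=n^{3/4}$ balances $R_n^2/n^2$ against $n/R_n^2$ both at $n^{-1/2}$, giving a total of $O(n^{-1/2})$, which is in fact stronger than (and a fortiori implies) the claimed $O(n^{-1/4})$.

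The only point requiring genuine care is the second-moment estimate $E\|\eta_n\|^2=O(n)$: Gaussian tail integrals and Riemann-sum comparisons for slowly varying integrands are routine, but the desired moment bound is not automatic from the local limit theorem alone. I would obtain it by writing $\eta_n=\sum_{i=1}^n Y_i$ with $Y_i=\eta_i-\eta_{i-1}$, invoking assumption (iv) to control the diagonal terms, and exploiting the exponential decay of correlations guaranteed by the irreducibility and aperiodicity of $(\varepsilon_n)$ to ensure the off-diagonal covariances sum to $O(n)$ rather than $O(n^2)$.
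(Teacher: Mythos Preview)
Your argument is correct and follows the standard truncation route the paper alludes to (it gives no proof beyond citing Ibragimov--Linnik, Theorem~4.2.2): split $\mathbb{Z}^2$ at a radius $R_n$, use the pointwise local estimate on the bulk, and control the tail by Chebyshev together with $E\|\eta_n\|^2=O(n)$, the latter coming from exponential decorrelation of the increments through the finite ergodic chain $(\varepsilon_n)$.

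One point worth recording: your version actually delivers $O(n^{-1/2})$, sharper than the paper's $O(n^{-1/4})$. The difference is that you exploit the Gaussian decay of the first error term in Proposition~\ref{Penelemma}, so that term can be summed over all of $\mathbb{Z}^2$ with no truncation, leaving only the balance $R_n^2/n^2$ versus $n/R_n^2$ and hence $R_n=n^{3/4}$. The paper's exponent is what one obtains from the cruder uniform bound $O(n^{-3/2})$ of Proposition~\ref{limitpropmoredimension} (or the sup of the right-hand side of Proposition~\ref{Penelemma}): then the bulk costs $R_n^2 n^{-3/2}$, balanced against $n/R_n^2$ at $R_n=n^{5/8}$, yielding $O(n^{-1/4})$. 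Since only $O(n^{-1/4+\varepsilon})$ is used downstream in Section~\ref{Sectstronglaw}, either bound suffices.
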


\subsection{Reversed walks}
\label{subrev}

The so-called reversed walk will be important in the sequel. If a RWwIS is
given with the appropriate $\left( p_{y,i,j}\right) $ probabilities, then we
define the $\left( q_{y,i,j}\right) $ reversed random walk for which%
\begin{equation}
q_{y,i,j}=\frac{\mu _{j}p_{-y,j,i}}{\mu _{i}}.  \label{reversedwalk}
\end{equation}%
Obviously, the stationary measure of the reversed walk is also $\mu $. As we
would like to apply the local limit theorem for the reversed walk, we need

\begin{proposition}
\label{Proprevw}If the primary RWwIS fulfills our basic assumptions, then
the reversed walk fulfills them as well. Furthermore, the so-called
covariance matrix of the reversed walk is the same as the one of the primary
walk.
\end{proposition}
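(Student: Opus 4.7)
The plan is to verify the four basic assumptions (i)--(iv) for the reversed walk in turn, and to read off the equality of covariance matrices from the same identification used for (iv).

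Condition (i) is classical time-reversal of Markov chains: writing $D=\operatorname{diag}(\mu)$, the transition matrix $Q'$ of the $\varepsilon$-chain of the reversed walk satisfies $Q'=D^{-1}Q^{T}D$, so $\mu Q'=\mu$ and irreducibility and aperiodicity are preserved under transposition and diagonal conjugation. Condition (ii) follows immediately from the fact that $q_{y,i,j}>0$ iff $p_{-y,j,i}>0$: the set of admissible jumps is just the negation of the original one, so the subgroup of $\mathbb{Z}^d$ it generates is unchanged and still equals $\mathbb{Z}^d$.

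For (iii), (iv), and the asserted identity $\sigma'=\sigma$, the cleanest route is probabilistic. Start the original chain from $\varepsilon_0\sim\mu$, $\eta_0=0$, run it for $N$ steps, and form $\tilde\xi_n=\xi_{N-n}$ with its spatial component recentered at the origin, i.e.\ $\tilde\eta_n=\eta_{N-n}-\eta_N$. A direct check using (\ref{reversedwalk}) shows that the conditional law of the increment $\tilde\eta_{n+1}-\tilde\eta_n$ given $(\tilde\varepsilon_n,\tilde\varepsilon_{n+1})=(i,k)$ equals $p_{-y,k,i}/Q_{k,i}$, which coincides with $q_{y,i,k}/Q'_{i,k}$; hence $\tilde\xi$ is a genuine realization of the reversed RWwIS started from $\varepsilon_0\sim\mu$. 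Its total displacement after $N$ steps is
\begin{equation*}
\tilde\eta_N-\tilde\eta_0=(\eta_0-\eta_N)-0=-\eta_N .
\end{equation*}
Thus the displacement of the reversed walk after $N$ steps (from stationarity) has the same law as $-\eta_N$ under the original stationary walk, so its mean is zero (giving (iii)) and its covariance matrix equals that of $\eta_N$. Dividing by $N$ and using that, by the Kr\'amli--Sz\'asz theorem, $\sigma$ (resp.\ $\sigma'$) is the per-step limiting covariance of $\eta_N$ (resp.\ of the reversed displacement), one obtains $\sigma'=\sigma$. In particular $\sigma'$ is positive definite, so (iv) holds as well.

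The only step needing real attention is the time-reversal identification of the increments; once the elementary identity $q_{y,i,k}/Q'_{i,k}=p_{-y,k,i}/Q_{k,i}$ is noted, everything reduces to bookkeeping. As a sanity check one can also verify $\sigma'=\sigma$ algebraically from the formula in the Kr\'amli--Sz\'asz theorem: direct computation gives $M'_l=-D^{-1}M_l^{T}D$ and $\langle\mu,\Sigma'_{l,m}\underline 1\rangle=\langle\mu,\Sigma_{l,m}\underline 1\rangle$, while $(Q'-1)^{-1}=D^{-1}((Q-1)^{-1})^{T}D$ turns each cross term $\langle\mu,M'_l(Q'-1)^{-1}M'_m\underline 1\rangle$ into its counterpart for the primary walk (with the two sign flips from $M'_l$ and $M'_m$ cancelling). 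This alternative route is straightforward but tedious, which is why the probabilistic argument is preferable.
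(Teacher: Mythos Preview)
Your proof is correct and takes a genuinely different route from the paper's. The paper dismisses assumptions (i)--(iii) as obvious and then proves $\widetilde\sigma=\sigma$ purely algebraically: it introduces the $\mu$-weighted inner product $(u,v)=\sum_i\mu_i u_i v_i$, observes that $\widetilde Q=Q^{\ast}$, $\widetilde M_l=-(M_l)^{\ast}$, $\widetilde\Sigma_{l,m}=(\Sigma_{l,m})^{\ast}$ with respect to this pairing, and reads off $\widetilde\sigma_{l,m}=\sigma_{l,m}$ directly from the defining formula. This is exactly the computation you sketch as a ``sanity check''; your conjugation $D^{-1}(\cdot)^{T}D$ is precisely the adjoint for that inner product.

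Your primary argument is instead probabilistic: you realize the reversed walk by running the stationary original walk backwards, so that the $N$-step displacement has the law of $-\eta_N$, and deduce equality of covariances. This is more conceptual and explains \emph{why} the covariances coincide rather than just verifying an algebraic identity. One small caveat: the step ``$\sigma$ is the per-step limiting covariance of $\eta_N$'' is the Green--Kubo identification of the formula defining $\sigma$; it is standard, but it is not literally the content of the Kr\'amli--Sz\'asz local limit theorem as stated here. It follows from the second-order Taylor expansion of $\lambda(t)$ (or a direct computation of $\frac{1}{N}\mathrm{Cov}(\eta_N)$) and requires only existence of second moments, not positive definiteness of $\sigma'$, so there is no circularity in applying it to the reversed walk before (iv) is established. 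The paper's algebraic proof sidesteps this identification entirely and is in that sense more self-contained, while your argument is shorter and more transparent.
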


\begin{proof}
Basic assumptions (i)-(iii) are fulfilled obviously. So it suffices to prove
the second statement. Let us introduce some notations%
\begin{eqnarray*}
\widetilde{A}_{y} &=&\left( q_{y,j,k}\right) _{j,k=1,...,s}, \\
\widetilde{Q} &=&\underset{y\in 
\mathbb{Z}      
^{d}}{\sum }\widetilde{A}_{y}, \\
\widetilde{M}_{l} &=&\underset{y\in 
\mathbb{Z}
^{d}}{\sum }y_{l}\widetilde{A}_{y}, \\
\widetilde{\Sigma }_{l,m} &=&\underset{y\in 
\mathbb{Z}
^{d}}{\sum }y_{l}y_{m}\widetilde{A}_{y},
\end{eqnarray*}%
and a new inner product%
\begin{eqnarray*}
\left( ,\right) &:&%
\mathbb{R}
^{s}\times 
\mathbb{R}
^{s}\rightarrow 
\mathbb{R}
, \\
\left( u,v\right) &=&\overset{s}{\underset{i=1}{\sum }}\mu _{i}u_{i}v_{i}.
\end{eqnarray*}%
Let us denote by $A^{\ast }$\ the adjoint of the linear operator $A$, i.e. $%
\left( u,Av\right) =\left( A^{\ast }u,v\right) $ for all $u,v\in 
\mathbb{R}
^{s}$. Elementary calculations show that $\widetilde{Q}=Q^{\ast }$, $%
\widetilde{A}_{y}=\left( A_{-y}\right) ^{\ast }$, $\widetilde{M}_{l}=-\left(
M_{l}\right) ^{\ast }$, $\widetilde{\Sigma }_{l,m}=\left( \Sigma
_{l,m}\right) ^{\ast }$ for all $y\in 
\mathbb{Z}
^{d},1\leq l,m\leq s$. Now, for an arbitrary element $\widetilde{\sigma }%
_{l,m}$ of the "covariance matrix" defined for the reversed walk%
\begin{eqnarray*}
\widetilde{\sigma }_{l,m} &=&\left( 1,\widetilde{\Sigma }_{l,m}1\right)
-\left( 1,\widetilde{M}_{l}\left( \widetilde{Q}-1\right) ^{-1}\widetilde{M}%
_{m}1\right) -\left( 1,\widetilde{M}_{m}\left( \widetilde{Q}-1\right) ^{-1}%
\widetilde{M}_{l}1\right) \\
&=&\left( \Sigma _{l,m}1,1\right) -\left( M_{m}\left( Q-1\right)
^{-1}M_{l}1,1\right) -\left( M_{l}\left( Q-1\right) ^{-1}M_{m}1,1\right) \\
&=&\sigma _{l,m}.
\end{eqnarray*}%
Hence the statement.
\end{proof}

\section{Visited points in high dimensions}
\label{Secthighdim}

In the high dimensional case, we find that $E_{d}\left( n\right) $ grows
fast, i.e. linearly in $n$, as we could have conjectured it from the
transiency of the RWwIS. In Theorem \ref{highdimEdn} we prove this fact and
compute remainder terms, too. Our approach is based on the one of \cite%
{Dv-Er}, but there are some main differences. First, we have to consider the
reversed random walk which is trivial in the case of \cite{Dv-Er}. After it,
the renewal equation is written with matrices and vectors, which is more
technical than in the case of \cite{Dv-Er}. Moreover, there will be a
technical difficulty, namely we will have to consider the case, when the
distribution of $\varepsilon _{0}$ is arbitrary. This will be treated
separately in Proposition \ref{highdimEdnprop}. After it, we will be able to
estimate $V_{d}\left( n\right) $. In fact, $o\left( n^{2}\right) $ is enough
for proving weak law of large numbers, and $O\left( n^{2-\delta }\right) $
for strong law of large numbers, but our estimations will be sharper.
Nevertheless, these estimations are weaker than the ones of \cite{Dv-Er}
because a symmetry argument, used in \cite{Dv-Er}, fails here. That is why
the computation is longer and it uses Proposition \ref{highdimEdnprop}, too.
Let us see the details.

\begin{theorem}
\label{highdimEdn}Let $d\geq 3$. Assuming that $\varepsilon _{0}$ is
distributed according to its unique stationary measure, we have%
\begin{eqnarray*}
E_{3}\left( n\right) &=&n\gamma _{3}+O(\sqrt{n}) \\
E_{4}\left( n\right) &=&n\gamma _{4}+O(\log n) \\
E_{d}\left( n\right) &=&n\gamma _{d}+\beta _{d}+O(n^{2-d/2})\quad \text{\
for }d\geq 5\text{ }
\end{eqnarray*}
with some constants $\gamma _{d},\beta _{d}$, depending on the RWwIS.
\end{theorem}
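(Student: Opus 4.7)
The strategy is to follow the Dvoretzky-Erd\H{o}s approach, with the essential new ingredient being a time-reversal argument (unlike SSRW, the primary walk is not symmetric). By linearity of expectation, $E_d(n) = \sum_{k=0}^n q_k$, where $q_k$ is the probability that $\eta_k$ differs from $\eta_0,\dots,\eta_{k-1}$. Since $\varepsilon_0 \sim \mu$, the joint law of $(\xi_0,\dots,\xi_k)$ is invariant under passage to the reversed walk of (\ref{reversedwalk}); shifting the spatial origin to $\eta_k$ in the reversed picture gives $q_k = P^{*}_{\mu}(\tilde T > k)$, where $\tilde T$ is the first hitting time of $0$ by the reversed walk $\tilde\eta$ started at the origin with stationary internal state. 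The natural candidate for the leading constant is then $\gamma_d := P^{*}_{\mu}(\tilde T = \infty)$, which is strictly positive for $d\geq 3$ by transience.

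Next I would set up the matrix-valued renewal framework for the reversed walk: let $\tilde U_n$ (resp.\ $\tilde F_n$) be the $s\times s$ matrix whose $(i,j)$ entry is the probability that $\tilde\eta_n = 0$, $\tilde\varepsilon_n = j$ (resp.\ for the first time) starting from $(0,i)$. The standard renewal decomposition gives $\tilde U_n = \delta_{n,0} I + \sum_{k=1}^n \tilde F_k \tilde U_{n-k}$, equivalently $\tilde U(z) = (I - \tilde F(z))^{-1}$ for the generating functions. Writing $a_j := \mu^{T} \tilde F_j \underline{1}$, one has $q_k - \gamma_d = \sum_{j>k} a_j$. By Proposition \ref{Proprevw}, the reversed walk satisfies the hypotheses of the Kr\'amli-Sz\'asz local limit theorem, so $\tilde U_n = O(n^{-d/2})$ entrywise, with leading coefficient a rank-one matrix proportional to $\underline{1}\mu^{T}$.

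The analytic heart of the argument is the transfer from the decay of $\tilde U_n$ to the tail bound $\sum_{j>n} a_j = O(n^{1-d/2})$. The identity $\tilde F(z) = I - \tilde U(z)^{-1}$ controls the regularity of $\tilde F(z)$ near $z=1$: since $\tilde U(1)$ is finite and invertible in $d\geq 3$, the tails of $\tilde F_n$ are governed by those of $\tilde U_n$. Projecting onto $\mu^{T}(\cdot)\underline{1}$ reduces the matrix identity to a scalar Tauberian-type statement for $A(z) = \sum_j a_j z^j$ completely analogous to the SSRW relation $1 - F(z) = 1/U(z)$.

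Finally, by exchanging the order of summation,
\[
E_d(n) - (n+1)\gamma_d = \sum_{j\geq 1} a_j \min(j, n+1) = \sum_{j=1}^{n+1} j a_j + (n+1)\sum_{j>n+1} a_j.
\]
For $d\geq 5$ the series $\sum_j j a_j$ converges (providing $\beta_d$) and both remainders are $O(n^{2-d/2})$; for $d=4$, $\sum_{j\leq n+1} j a_j = O(\log n)$ while $(n+1)\sum_{j>n+1} a_j = O(1)$; for $d=3$ both pieces contribute $O(\sqrt n)$. The main obstacle is the transfer step: in the scalar SSRW case it is a one-line generating-function manipulation, but here inverting $(I-\tilde F(z))$ and extracting a single scalar tail quantity requires exploiting the spectral gap of the transition matrix $Q$ and the rank-one structure of the leading order of $\tilde U_n$ to reduce cleanly to a scalar estimate.
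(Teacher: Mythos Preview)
Your overall framework matches the paper: the time-reversal identity $q_k = P^*_\mu(\tilde T > k)$ and the final summation $E_d(n) - (n+1)\gamma_d = \sum_{j\geq 1} a_j \min(j,n+1)$ are both correct and lead to the stated error terms \emph{once} you have $q_n - \gamma_d = O(n^{1-d/2})$. The gap is precisely at your ``transfer step''. The claim that projecting $\tilde F(z) = I - \tilde U(z)^{-1}$ by $\mu^T(\cdot)\underline{1}$ yields a scalar relation analogous to $1-F(z)=1/U(z)$ is not correct as stated: $\mu^T \tilde U(z)^{-1}\underline{1}$ is not $1/(\mu^T \tilde U(z)\underline{1})$, and the identity $\mu^T \tilde U(z)(I-\tilde F(z))\underline{1}=1$ does not factor. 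A genuine matrix-renewal or perturbative inversion argument near $z=1$ would be needed, and you have not supplied one; invoking ``spectral gap and rank-one structure'' names the ingredients but does not constitute the reduction.

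The paper avoids this analytic difficulty by working not with the first-passage matrices $\tilde F_n$ but with the survival vector $(R_k)_j = P(\tilde\eta_1,\dots,\tilde\eta_k \neq 0 \mid \tilde\varepsilon_0 = j)$ and the renewal equation
\[
\sum_{k=0}^n U_k\, R_{n-k} = \underline{1}.
\]
Because $R_n$ is componentwise nonincreasing, this immediately gives the two-sided bounds $\bigl(\sum_{k\leq n} U_k\bigr) R_n \leq \underline{1}$ and $\bigl(\sum_{l\leq k} U_l\bigr) R_{n-k} + \bigl(\sum_{l>k} U_l\bigr)\underline{1} \geq \underline{1}$. Averaging over rows and using only the local-limit input $\bigl(\sum_{k\leq n} U_k\bigr)_{ij} = \hat c_{ij} + O(n^{1-d/2})$, one first extracts $\sum_j \hat c_j R^j = 1$ and then $\sum_j \hat c_j\bigl((R_n)_j - R^j\bigr) \leq O(n^{1-d/2})$; since all $\hat c_j>0$ and $(R_n)_j - R^j \geq 0$, each component is $O(n^{1-d/2})$. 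This is entirely elementary --- no generating functions, no Tauberian step --- and the monotonicity of $R_n$ does the work that your proposed spectral reduction would have had to do.
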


\begin{proof}
Fix some dimension $d\geq 3$. For the sake of simplicity, we skip the
index $d$ and denote $E_d (n)=\underset{k=1}{\overset{n}{\sum }}\gamma (k)$. Consider an $\left\{ \xi _{k}=\left( \eta
_{k},\varepsilon _{k}\right) ,0\leq k\right\} $ RWwIS fulfilling our
assumptions. Let $\left\{ \widetilde{\xi }_{k}=\left( \widetilde{\eta }_{k},%
\widetilde{\varepsilon }_{k}\right) ,0\leq k\right\} $ be the reversed walk,
i.e. for which the transition probabilities are defined by (\ref%
{reversedwalk}). Put $\eta _{0}=0$, $\gamma (0)=1$ and define%
\begin{equation*}
\gamma (n)=P\left( \eta _{n}\notin \left\{ \eta _{0},...,\eta _{n-1}\right\}
\right)
\end{equation*}%
which is just the probability that the walk visits a new point at step $n$.
Obviously%
\begin{eqnarray*}
\gamma (n) &=&P(\eta _{i}\neq \eta _{n}\quad i=0,..n-1) \\
&=&P(\eta _{n}-\eta _{i}\neq 0\quad i=0,..n-1) \\
&=&P(\widetilde{\eta }_{n-i}\neq 0\quad i=0,..n-1) \\
&=&P(\widetilde{\eta }_{j}\neq 0\quad j=1,..n).
\end{eqnarray*}

It is clear that we have to examine\ the reversed walk.

Define $U_{k}\in 
\mathbb{R}
^{s\times s}$ with 
\begin{equation*}
\left( U_{k}\right) _{i,j}=P\left( \widetilde{\xi }_{k}=(0,j)|\widetilde{\xi 
}_{0}=(0,i)\right)
\end{equation*}%
and $R_{k}\in 
\mathbb{R}
^{s}$ with 
\begin{equation*}
\left( R_{k}\right) _{j}=P(0\notin \left\{ \widetilde{\eta }_{1},...,%
\widetilde{\eta }_{k}\right\} |\widetilde{\xi }_{0}=(0,j)).
\end{equation*}
Obviously, we have:%
\begin{equation*}
\overset{n}{\underset{k=0}{\sum }}U_{k}\cdot R_{n-k}=\underline{1}.
\end{equation*}%
We are interested in $\left\langle R_{n},\mu \right\rangle =\gamma (n)$.
From the definition of $R_{k}$, for $n_{1}>n_{2}$ we have $%
R_{n_{2}}-R_{n_{1}}\geq \underline{0}$, which means that all the components
of the vector are non-negative.

We know from Proposition \ref{Proprevw} and \cite{Kramli-Szasz} Theorem 5.2.
that $\left( U_{k}\right) _{i,j}=c_{j}k^{-\frac{d}{2}}+o_{i,j}(k^{-\frac{d}{2%
}})$. Here we have $c_{j}=c\mu _{j}$, but this fact will not be used. So we
have%
\begin{equation*}
\left( \overset{n}{\underset{k=0}{\sum }}U_{k}\right) _{i,j}=\widetilde{c}%
_{i,j}+O\left( n^{1-\frac{d}{2}}\right) .
\end{equation*}%
Using the monotonity of $R_{k}$ we infer 
\begin{equation*}
\underline{1}\geq \left( \overset{n}{\underset{k=0}{\sum }}U_{k}\right)
\cdot R_{n}.
\end{equation*}%
Defining $\widehat{c}_{j}$ the following way%
\begin{equation*}
\left( \left( \frac{1}{s}\underline{1}\right) ^{T}\cdot \left( \overset{n}{%
\underset{k=0}{\sum }}U_{k}\right) \right) _{j}=\frac{1}{s}\underset{i=1}{%
\overset{s}{\sum }}\left( \widetilde{c}_{i,j}+O\left( n^{1-\frac{d}{2}%
}\right) \right) =\widehat{c}_{j}+O\left( n^{1-\frac{d}{2}}\right) ,
\end{equation*}%
we have%
\begin{equation}
1\geq \left\langle \left( \widehat{c}_{1}+O\left( n^{1-\frac{d}{2}}\right)
,...,\widehat{c}_{s}+O\left( n^{1-\frac{d}{2}}\right) \right)
,R_{n}\right\rangle .  \label{highdimupperbound}
\end{equation}%
For all $j$, $\left( R_{n}\right) _{j}$ has a limit in $n$, being a
decreasing non-negative sequence. So write $\left( R_{n}\right)
_{j}=R^{j}+a_{n}^{j}$, where $a_{n}^{j}\searrow 0$. It will be enough to
estimate the order of $a_{n}^{j}$, because $\gamma (n)=\underset{j=1}{%
\overset{s}{\sum }}\mu _{j}\left( R^{j}+a_{n}^{j}\right) $.

For the estimation of the other direction let $k<n$. We have:%
\begin{equation*}
\left( \frac{1}{s}\underline{1}\right) ^{T}\cdot \left( \overset{k}{\underset%
{i=0}{\sum }}U_{i}\right) \cdot R_{n-k}+\left( \frac{1}{s}\underline{1}%
\right) ^{T}\cdot \left( \overset{n}{\underset{i=k+1}{\sum }}U_{i}\right)
\cdot \underline{1}\geq 1.
\end{equation*}%
Since $\left( U_{k}\right) _{i,j}\geq 0$ for all $k,i,j$, we have $\left( 
\frac{1}{s}\underline{1}\right) ^{T}\cdot \left( \overset{k}{\underset{i=0}{%
\sum }}U_{i}\right) \leq \left( \widehat{c}_{1},...,\widehat{c}_{s}\right) $%
. On the other hand, $\left( \frac{1}{s}\underline{1}\right) ^{T}\cdot
\left( \overset{n}{\underset{i=k+1}{\sum }}U_{i}\right) \cdot \underline{1}%
=o(1)$, as $k\rightarrow \infty $, thus%
\begin{equation}
\left\langle \left( \widehat{c}_{1},...,\widehat{c}_{s}\right)
,R_{n-k}\right\rangle \geq 1+o(1).  \label{hoghdimlowerbound}
\end{equation}%
So if we let $n\rightarrow \infty $, $k\rightarrow \infty $, $n-k\rightarrow
\infty $, (\ref{hoghdimlowerbound}) together with (\ref{highdimupperbound})
yields%
\begin{equation*}
\widehat{c}_{1}R^{1}+...+\widehat{c}_{s}R^{s}=1.
\end{equation*}%
Substituting to (\ref{highdimupperbound}) we have:%
\begin{equation*}
\overset{s}{\underset{j=1}{\sum }}\left[ \widehat{c}_{j}a_{n}^{j}+O\left(
n^{1-\frac{d}{2}}\right) R^{j}+O\left( n^{1-\frac{d}{2}}\right) a_{n}^{j}%
\right] \leq 0,
\end{equation*}%
whence%
\begin{equation*}
\overset{s}{\underset{j=1}{\sum }}\widehat{c}_{j}a_{n}^{j}\leq O\left( n^{1-%
\frac{d}{2}}\right) .
\end{equation*}%
Since $\widehat{c}_{j}>0$ and $a_{n}^{j}\geq 0$, we conclude that $%
a_{n}^{j}=O\left( n^{1-\frac{d}{2}}\right) $ for $1\leq j\leq s$. This
yields $\gamma (n)=\underset{j=1}{\overset{s}{\sum }}\mu _{j}\left(
R^{j}+a_{n}^{j}\right) =\gamma +O\left( n^{1-\frac{d}{2}}\right) $. Hence
the statement (just like in \cite{Dv-Er}).
\end{proof}

\begin{proposition}
\label{highdimEdnprop}The assertion of Theorem \ref{highdimEdn} remains true
when the distribution of $\varepsilon _{0}$ is arbitrary.
\end{proposition}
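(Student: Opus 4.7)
The plan is to reduce Proposition \ref{highdimEdnprop} to Theorem \ref{highdimEdn} by coupling the walk with $\varepsilon_0 \sim \nu$ (arbitrary) to the walk with $\varepsilon_0 \sim \mu$. Since the internal chain $(\varepsilon_k)$ is irreducible and aperiodic on the finite set $E$, a standard Doeblin coupling produces a coupling time $T$ with geometric tail $P(T>k) \leq C\rho^k$, after which the two internal chains can be kept together. I would extend this to a coupling of the full RWwIS: both spatial walks start at the origin; up to time $T$ the spatial increments are drawn independently given the coupled internal transitions; from time $T$ onwards the two walks share a single random input, so that for $k \geq T$ one has $\varepsilon_k^{(1)} = \varepsilon_k^{(2)}$ \emph{and} $\eta_{k+1}^{(i)} - \eta_k^{(i)}$ agrees for $i=1,2$. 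A routine check shows that the marginal law of each walk is preserved.

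Let $L_d^{(i)}(n)$ denote the number of sites visited by walk $i$ up to time $n$. Decompose $\{\eta_k^{(i)}: 0 \leq k \leq n\} = A_i \cup B_i$ with $A_i = \{\eta_k^{(i)}: 0 \leq k \leq T\}$ and $B_i = \{\eta_k^{(i)}: T \leq k \leq n\}$. By the coupling, $B_1$ and $B_2$ are spatial translates of one another (by $\eta_T^{(1)} - \eta_T^{(2)}$), so $|B_1| = |B_2|$. Using $L_d^{(i)}(n) = |A_i| + |B_i| - |A_i \cap B_i|$, subtraction yields
\[
|L_d^{(1)}(n) - L_d^{(2)}(n)| \leq 2|A_1| + 2|A_2| \leq 4(T+1),
\]
so that $|E_d^{(\nu)}(n) - E_d^{(\mu)}(n)| \leq 4(E[T]+1) = O(1)$. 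For $d=3$ and $d=4$ this $O(1)$ difference is absorbed into the $O(\sqrt{n})$ and $O(\log n)$ error terms of Theorem \ref{highdimEdn}, so the conclusion follows directly.

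For $d \geq 5$ the statement requires a definite additive constant $\beta_d$, so I must show that $E[L_d^{(1)}(n) - L_d^{(2)}(n)]$ converges. The only $n$-dependence on the right-hand side of the decomposition above enters through $|A_i \cap B_i|$, i.e., the number of sites in the bounded random set $A_i$ that are revisited during $[T,n]$. Transience for $d \geq 3$ together with Proposition \ref{limitpropmoredimension} gives convergence of each revisit probability at rate $O(n^{1-d/2})$, yielding $E[L_d^{(1)}(n) - L_d^{(2)}(n)] = c_\nu + O(n^{1-d/2})$ for some $c_\nu \in \mathbb{R}$. Since $O(n^{1-d/2}) \subset O(n^{2-d/2})$, the expansion from Theorem \ref{highdimEdn} transfers to $\nu$ with $\beta_d$ replaced by $\beta_d + c_\nu$. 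The main obstacle is the careful construction of the coupling that synchronizes both the internal states \emph{and} the spatial increments after time $T$ while preserving the marginal law of each walk; once this is in place the remaining estimates are routine consequences of the local limit theorem already invoked in Theorem \ref{highdimEdn}.
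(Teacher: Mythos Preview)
Your coupling argument is correct and proves the proposition, but it is genuinely different from the paper's route. The paper works at the level of the increment $\gamma^{e_j}(n)=P(\eta_n\notin\{\eta_0,\dots,\eta_{n-1}\}\mid\varepsilon_0=j)$: it conditions on $\varepsilon_K$ for a suitably chosen $K=K(n)=\lfloor n^\alpha\rfloor$, uses exponential convergence of the internal chain to $\mu$ together with the local limit bound $p(K,n)=O(K(n-K)^{-d/2})$ on the probability of hitting at time $n$ a site already seen before time $K$, and deduces the pointwise estimate $\gamma^{e_j}(n)-\gamma=O(n^{1-d/2})$. Summation then recovers the $E_d(n)$ expansion. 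Your approach bypasses the $\gamma$-level analysis entirely: a Doeblin coupling with geometric coupling time $T$ gives $|L_d^{(\nu)}(n)-L_d^{(\mu)}(n)|\le 4(T+1)$ pathwise, hence $|E_d^{(\nu)}(n)-E_d^{(\mu)}(n)|=O(1)$, which is already enough for $d=3,4$; the extra step for $d\ge 5$ (convergence of $E[|A_i\cap B_i(n)|]$ at rate $O(n^{1-d/2})$ via transience) is straightforward once one splits on $\{T\le n/2\}$.

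Your argument is more transparent for the proposition as stated. What it does \emph{not} deliver, and what the paper's proof does, is the pointwise bound $\gamma_d^{\nu}(k)=\gamma_d+O(k^{1-d/2})$ uniformly in $\nu$. That finer estimate is precisely what the paper extracts ``from the proof of Proposition~\ref{highdimEdnprop}'' and feeds into the variance bound of Theorem~\ref{highdimvdn} (where $\delta$ is replaced by $O(K(n)^{1-d/2})$). So if you adopt the coupling proof here, be aware that you will need a separate argument later to recover that uniform $\gamma$-estimate.
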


\begin{proof}
With the notation $\gamma (n)=\gamma +h(n)$ we already know that $%
h(n)=O\left( n^{1-\frac{d}{2}}\right) $. Define $\gamma ^{e_{j}}(n)=P\left(
\eta _{n}\notin \left\{ \eta _{0},...,\eta _{n-1}\right\} |\varepsilon
_{0}=j\right) $ and $\gamma ^{e_{j}}(n)=\gamma +h^{j}(n)$ for $j=1,...,s$.
As in the previous proof, it would be sufficient to prove $h^{j}(n)=O\left(
n^{1-\frac{d}{2}}\right) $ for all $j$.

For the present, let $K$ be a fixed, great natural number, and%
\begin{equation*}
\mu _{k}+b_{k}^{j}(K)=P\left( \varepsilon _{K}=k|\varepsilon _{0}=j\right)
\quad j,k=1,...s.
\end{equation*}

We know from the ergodic theorem of Markov chains that $b_{k}^{j}(K)$ tends
to zero exponentially fast in $K$.

Denote by $p(K,n)$ the probability of visiting such a site at time $n$ that
was visited during the first $K$ steps, but was not visited in the following 
$(n-K-1)$ steps, provided that $\varepsilon _{0}=j$. We know from \cite%
{Kramli-Szasz} Theorem 5.2. that $p(K,n)=O\left( K\cdot \left( n-K\right) ^{-%
\frac{d}{2}}\right) $, whence%
\begin{equation}
\gamma ^{e_{j}}(n)=\overset{s}{\underset{k=1}{\sum }}\left[ \left( \mu
_{k}+b_{k}^{j}(K)\right) \gamma ^{e_{k}}(n-K)\right] +O\left( K\cdot \left(
n-K\right) ^{-\frac{d}{2}}\right) .  \label{highprop1}
\end{equation}

Recall $\gamma ^{e_{j}}(n)=\gamma +h^{j}(n)$ to infer that $h^{j}(n)$ is equal to%
\begin{eqnarray}
\overset{s}{\underset{k=1}{\sum }}\mu _{k}h^{k}(n-K)+\overset{s}{%
\underset{k=1}{\sum }}b_{k}^{j}(K)h^{k}(n-K)+O\left( K\cdot \left(
n-K\right) ^{-\frac{d}{2}}\right) \notag \\ 
=:I+II+III.  \label{highprop2}
\end{eqnarray}

Now, put $K=K(n)=\left\lfloor n^{\alpha }\right\rfloor $ with arbitrary $%
0<\alpha <1$. It is clear that $I$ is equal to $h(n-K)$, so the proof of
Theorem \ref{highdimEdn} yields $I=O\left( \left( n-n^{\alpha }\right) ^{1-%
\frac{d}{2}}\right) \leq O\left( n^{1-\frac{d}{2}}\right) $. Since $%
b_{k}^{j}(K)$\ tends to zero exponentially fast in $K$ we have $II\leq
O\left( n^{1-\frac{d}{2}}\right) $. Finally, $III=O\left( n^{\alpha }\left(
n-n^{\alpha }\right) ^{-\frac{d}{2}}\right) \leq O\left( n^{1-\frac{d}{2}%
}\right) $. Hence the statement.
\end{proof}

Now, let us see the estimation of $V_{d}\left( n\right) $.

\begin{theorem}
\label{highdimvdn}For $d\geq 3$ assuming that $\varepsilon _{0}\sim \mu $ we
have%
\begin{equation*}
V_{d}(n)=O\left( n^{1+\frac{2}{d}}\right) .
\end{equation*}
\end{theorem}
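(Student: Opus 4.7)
The approach is to compare $L_d(n)$ with a sliding--window truncation, estimate the variance of the truncated sum via the exponential mixing of the internal state chain, control the truncation error via Theorem 5.2. of \cite{Kramli-Szasz}, and optimize over the window length. Write $L_d(n)=\sum_{k=0}^{n}X_k$ with $X_k=\mathbf{1}[\eta_k\notin\{\eta_0,\dots,\eta_{k-1}\}]$, fix a parameter $N\in\mathbb{N}$, and introduce the truncated indicators
\[
X_k^{(N)}=\mathbf{1}[\eta_k\notin\{\eta_{k-N},\dots,\eta_{k-1}\}]\quad(k>N),\qquad X_k^{(N)}=X_k\quad(k\leq N).
\]
Setting $L_d^{(N)}(n)=\sum_{k=0}^n X_k^{(N)}$ one has $L_d^{(N)}(n)\geq L_d(n)$, and the elementary bound $\operatorname{Var}(A+B)\leq 2\operatorname{Var}(A)+2\operatorname{Var}(B)$ applied to $L_d=L_d^{(N)}-(L_d^{(N)}-L_d)$ gives
\[
V_d(n)\leq 2\operatorname{Var}\!\bigl(L_d^{(N)}(n)\bigr)+2\,E\!\bigl[(L_d^{(N)}(n)-L_d(n))^2\bigr].
\]

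The first term I would bound by $O(nN)$. The indicator $X_k^{(N)}$ is a function of $(\varepsilon_{k-N},\dots,\varepsilon_k)$ together with the increments in that window, so for $l-k>2N$ the two windows involved in $X_k^{(N)}$ and $X_l^{(N)}$ are disjoint. Conditioning on $\varepsilon_k$ and using the exponential convergence $\|P(\varepsilon_{l-N}=\cdot\mid\varepsilon_k=e)-\mu\|_\infty\leq C\rho^{l-N-k}$ for some $\rho\in(0,1)$, which follows from the irreducibility and aperiodicity of $Q$, yields $|\operatorname{Cov}(X_k^{(N)},X_l^{(N)})|=O(\rho^{l-k-N})$. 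Summing: the pairs with $|l-k|\leq 2N$ contribute $O(nN)$, and the pairs with $|l-k|>2N$ contribute $O(n\rho^N)$, which is negligible as soon as $N\geq c\log n$.

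For the second term, $0\leq L_d^{(N)}(n)-L_d(n)\leq n+1$, so
\[
E[(L_d^{(N)}(n)-L_d(n))^2]\leq (n+1)\,E[L_d^{(N)}(n)-L_d(n)].
\]
Now $X_k^{(N)}-X_k=1$ requires $\eta_k=\eta_j$ for some $j\leq k-N-1$. By the reversed--walk identity used in the proof of Theorem \ref{highdimEdn}, combined with Theorem 5.2. of \cite{Kramli-Szasz} which gives $P(\tilde\eta_m=0)=O(m^{-d/2})$, one obtains
\[
E[X_k^{(N)}-X_k]\leq\sum_{m=N+1}^{k}P(\tilde\eta_m=0)=O(N^{1-d/2}),
\]
hence $E[(L_d^{(N)}(n)-L_d(n))^2]=O(n^2 N^{1-d/2})$.

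Combining the two estimates, $V_d(n)=O(nN+n^2N^{1-d/2})$; the two summands balance at $N=\lfloor n^{2/d}\rfloor$, which produces the claimed $O(n^{1+2/d})$. The main obstacle is the covariance estimate of the second paragraph: because each $X_k^{(N)}$ depends on a whole window of $N$ steps of the Markov chain rather than a single time slice, the usual ``exponential mixing implies small covariance'' reasoning needs to be combined with careful handling of the window endpoints to produce the decisive factor $\rho^{l-k-N}$, and one must check that constants are uniform in $N$.
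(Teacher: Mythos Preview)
Your argument is correct and reaches the same bound $O(n^{1+2/d})$ with the same optimal cutoff $N\asymp n^{2/d}$, but the route is genuinely different from the paper's. The paper works directly with the joint new-point probabilities $\gamma(m,n)$: it bounds $\gamma(m,n)\le\gamma(m)\,\gamma_d^{\nu}(n-m)$ for a shifted initial law $\nu$, invokes Proposition~\ref{highdimEdnprop} to get $\gamma_d^{\nu}(k)\le(1+O(k^{1-d/2}))\gamma_d(k)$ uniformly in $\nu$, and then splits the double sum $\sum_{i,j}(\gamma(i,j)-\gamma(i)\gamma(j))$ according to whether $|i-j|$ exceeds a threshold $K$. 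Your truncation scheme bypasses Proposition~\ref{highdimEdnprop} entirely: the variance of $L_d^{(N)}$ is controlled by the exponential mixing of $(\varepsilon_n)$ alone, and the truncation error needs only the crude union bound $P(X_k^{(N)}-X_k=1)\le\sum_{m>N}P(\eta_m=0)=O(N^{1-d/2})$ from the local limit theorem. In this sense your proof is more self-contained, since it does not rely on first establishing the asymptotics of $\gamma_d^{\nu}(n)$ for arbitrary $\nu$. On the other hand, the paper's decomposition in terms of $\gamma(m,n)$ is reused verbatim for $d=2$ (Theorem~\ref{twodimvdn}), whereas your truncation bound $E[X_k^{(N)}-X_k]\le\sum_{m>N}P(\eta_m=0)$ degenerates there (the tail sum is $O(\log(k/N))$ rather than a negative power of $N$), so the paper's approach transfers more directly to the recurrent case. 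A minor remark: the reversed-walk identity is not really needed in your second step, since under $\varepsilon_0\sim\mu$ stationarity already gives $P(\eta_k=\eta_{k-m})=P(\eta_m=0)$ for the forward walk.
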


\begin{proof}
Let $\gamma \left( n,m\right) $ denote the probability that the RWwIS visits
new points in both the $n^{th}$ and the $m^{th}$ step under the condition
that $\varepsilon _{0}\sim \mu $, and let $A=\{\eta _{i}\neq \eta _{m},\quad
i=0,...,m-1\}$. Obviously, $\gamma _{d}\left( n,m\right) =\gamma _{d}\left(
m,n\right) $, so, when estimating $\gamma \left( n,m\right) $ one can assume 
$n>m$.%
\begin{eqnarray*}
\gamma \left( m,n\right) &=&P\left( A\text{ \& }\eta _{j}\neq \eta
_{n},\quad j=0,...,n-1\right) \\
&\leq &P\left( A\text{ \& }\eta _{j}\neq \eta _{n},\quad j=m,...,n-1\right)
\\
&=&\gamma (n)P\left( \eta _{j}\neq \eta _{n},\quad j=m,...,n-1\quad |\quad
A\right) .
\end{eqnarray*}

Here, $P\left( \eta _{j}\neq \eta _{n},\quad i=m,...,n-1\quad |\quad
A\right) $ is the probability that the RWwIS visits a new point in the $%
(n-m)^{th}$ step, assuming that the distribution of $\varepsilon _{0}$ is
some $\mu \left( n\right) $. So the condition $A$ is involved in $\mu \left(
n\right) $, and because of the Markov property, it has no other
contribution. The probability of this event is denoted by $\gamma _{d}^{\mu
(n)}(n-m)$. Because of Proposition \ref{highdimEdnprop} we know that $\gamma
_{d}^{\mu (n)}(n-m)\rightarrow \gamma _{d}$, as $(n-m)\rightarrow \infty $,
and it is easy to see that this convergence is uniform in $\mu \left(
n\right) $. So we know that for $\forall \delta >0$ $\exists $ $N=N\left(
\delta \right) $, such that for $\forall n-m>N$ the following estimation
holds. 
\begin{equation*}
\gamma _{d}^{\mu (n)}(n-m)=\underset{j=1}{\overset{s}{\sum }}\mu
(n)_{j}\gamma _{d}^{e_{j}}(n-m)<(1+\delta )\gamma _{d}(n-m).
\end{equation*}

In addition, using Proposition \ref{highdimEdnprop}, one can estimate $%
N\left( \delta \right) $, which will be done a little bit later. Now, let us
see the estimation of $V_{d}\left( n\right) $%
\begin{eqnarray*}
V_{d}\left( n\right) &=&\underset{i,j=0}{\overset{n}{\sum }}\gamma _d \left(
i,j\right) -\underset{i=0}{\overset{n}{\sum }}\gamma _d \left( i\right) 
\underset{j=0}{\overset{n}{\sum }}\gamma _d \left( j\right) \\
&\leq &2\underset{0\leq i\leq j\leq n}{\sum }\left( \gamma _d \left( i,j\right)
-\gamma _d \left( i\right) \gamma _d \left( j\right) \right) \\
&\leq &2\underset{0\leq i<i+K\leq j\leq n}{\sum }\left( \gamma _d \left(
i,j\right) -\gamma _d \left( i\right) \gamma _d \left( j\right) \right) +2\underset%
{\underset{i\leq j<i+K}{0\leq i\leq n}}{\sum }\gamma _d \left( i,j\right) \\
&=&:S_{1}+S_{2}.
\end{eqnarray*}

Let $K$ be big enough, such that for $n-m>K$ one would have $\gamma _d ^{\nu
}(n-m)<(1+\delta )\gamma _d (n-m)$ for arbitrary $\nu $. Estimating $S_{1}$ and 
$S_{2}$ separately, we get%
\begin{eqnarray*}
\frac{S_{1}}{2} &=&\underset{i=0}{\overset{n-K}{\sum }}\underset{j=i+K}{%
\overset{n}{\sum }}\gamma _{d}\left( i,j\right) -\underset{i=0}{\overset{n-K}%
{\sum }}\underset{j=i}{\overset{n}{\sum }}\gamma _{d}\left( i\right) \gamma
_{d}\left( j\right) +\underset{i=0}{\overset{n-K}{\sum }}\underset{j=i}{%
\overset{i+K}{\sum }}\gamma _{d}\left( i\right) \gamma _{d}\left( j\right) \\
&\leq &\underset{i=0}{\overset{n-K}{\sum }}\gamma _{d}\left( i\right) 
\underset{0\leq i\leq n-K}{\max }\left( \underset{j=i}{\overset{n}{\sum }}%
\left( 1+\delta \right) \gamma _{d}\left( j-i\right) -\underset{j=i}{\overset%
{n}{\sum }}\gamma _{d}\left( j\right) \right) \\
&&+\underset{i=0}{\overset{n-K}{\sum }}\gamma _{d}\left( i\right) \underset{%
j=i}{\overset{i+K}{\sum }}\gamma _{d}\left( j\right),
\end{eqnarray*}
which can be bounded by
\begin{eqnarray*}
&\leq &\underset{i=0}{\overset{n-K}{\sum }}\gamma _{d}\left( i\right) \left[
\delta E_{d}(n)+E_{d}(n-\left\lfloor \frac{n}{2}\right\rfloor
)-E_{d}(n)+E_{d}(\left\lfloor \frac{n}{2}\right\rfloor )\right] \\
&&+\underset{i=0}{\overset{n-K}{\sum }}\gamma _{d}\left( i\right) K.
\end{eqnarray*}%
On the other hand,%
\begin{equation*}
S_{2}\leq 2\underset{\underset{i\leq j<i+K}{0\leq i\leq n}}{\sum }\gamma
\left( i\right) \leq 2KE_{d}(n).
\end{equation*}%
From the proof of Proposition \ref{highdimEdnprop}, one can easily deduce
that for $k$ large enough
\begin{equation*}
\gamma _{d}^{\nu }(k)<\left( 1+O(k^{1-\frac{d}{2}})\right) \gamma
_{d}(k), 
\end{equation*}
uniformly in $\nu $. So replacing $K$ to $K\left( n\right) $ in
the above argument, one can change $\delta $ to $O\left( K(n)^{1-\frac{d}{2}%
}\right) $, thus%
\begin{eqnarray*}
V_{3}\left( n\right) &\leq &O(n)\left[ O\left( K(n)^{1-\frac{d}{2}}\right)
O\left( n\right) +O\left( \sqrt{n}\right) \right] +K\left( n\right) O\left(
n\right) \\
V_{4}\left( n\right) &\leq &O(n)\left[ O\left( K(n)^{1-\frac{d}{2}}\right)
O\left( n\right) +O\left( \log n\right) \right] +K\left( n\right) O\left(
n\right) \\
V_{d}\left( n\right) &\leq &O(n)\left[ O\left( K(n)^{1-\frac{d}{2}}\right)
O\left( n\right) +O\left( 1\right) \right] +K\left( n\right) O\left(
n\right) \quad d\geq 5.
\end{eqnarray*}%
Now, the choice $K(n)=\left\lfloor n^{\frac{2}{d}}\right\rfloor $ completes
the proof.
\end{proof}

\begin{proposition}
\label{highdimvdnprop}The assertion of Theorem \ref{highdimvdn} remains true
when the distribution of $\varepsilon _{0}$ is some arbitrary $\nu $.
Moreover, the great order is uniform in $\nu $.
\end{proposition}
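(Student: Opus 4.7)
The plan is to repeat the proof of Theorem \ref{highdimvdn} with $\mu$ replaced by an arbitrary initial law $\nu$ and verify that each estimate remains uniform in $\nu$. Write $\gamma_d^{\nu}(n)$, $\gamma_d^{\nu}(i,j)$, $E_d^{\nu}(n)$, $V_d^{\nu}(n)$ for the obvious analogues under $\varepsilon_0\sim\nu$. The splitting $V_d^{\nu}(n)\leq S_1+S_2$ and the final choice $K(n)=\lfloor n^{2/d}\rfloor$ will be taken over verbatim from the proof of Theorem \ref{highdimvdn}; what must be checked is that all ingredients used there are uniform in the starting distribution.

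The first ingredient is a uniform-in-$\nu$ version of Proposition \ref{highdimEdnprop}. Going back to its proof, the term $I$ in (\ref{highprop2}) is $h(n-K)=O(n^{1-d/2})$ and is independent of $\nu$; the term $II$ is controlled by $\max_{j,k}|b_k^j(K)|$ times a constant, and this maximum decays exponentially in $K$ uniformly in the initial state $j$ because the rate of convergence to $\mu$ for the finite, irreducible, aperiodic chain $\varepsilon_n$ is uniform over the starting state; and the term $III=O(K(n-K)^{-d/2})$ coming from Theorem 5.2 of \cite{Kramli-Szasz} is again uniform. Averaging over $j$ against $\nu$ preserves these estimates, giving $\gamma_d^{\nu}(n)=\gamma_d+O(n^{1-d/2})$ uniformly in $\nu$; in particular $E_d^{\nu}(n)=O(n)$ uniformly and $\gamma_d^{\tilde\nu}(k)\leq (1+O(k^{1-d/2}))\gamma_d(k)$ for every probability vector $\tilde\nu$ on $E$.

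The second ingredient is the joint estimate. For $i<j$, conditioning on the event $A_i=\{\eta_\ell\neq\eta_i,\ \ell=0,\dots,i-1\}$ and on the value of $\varepsilon_i$, and applying the Markov property, yields
\begin{equation*}
\gamma_d^{\nu}(i,j)\leq \gamma_d^{\nu}(i)\,\gamma_d^{\tilde\nu(\nu,i)}(j-i),
\end{equation*}
where $\tilde\nu(\nu,i)$ is the (complicated) conditional distribution of $\varepsilon_i$ given $A_i$. Its specific form is irrelevant: as a probability vector on $E$ it falls under the uniform bound above, so $\gamma_d^{\tilde\nu(\nu,i)}(j-i)\leq (1+O((j-i)^{1-d/2}))\gamma_d(j-i)$. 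Substituting this into $S_1$ and bounding $S_2\leq 2K(n)E_d^{\nu}(n)$ reproduces the computation at the end of the proof of Theorem \ref{highdimvdn}, now with every $O(\cdot)$ uniform in $\nu$, and the choice $K(n)=\lfloor n^{2/d}\rfloor$ delivers $V_d^{\nu}(n)=O(n^{1+2/d})$ with a constant independent of $\nu$. The only (mild) obstacle is thus the bookkeeping of uniformity in Proposition \ref{highdimEdnprop}; no new probabilistic input is required.
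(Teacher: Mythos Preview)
Your proof is correct but proceeds by a genuinely different route than the paper's. You re-run the $S_1+S_2$ computation of Theorem \ref{highdimvdn} under an arbitrary initial law $\nu$, checking that every input---the expansion $\gamma_d^{\nu}(n)=\gamma_d+O(n^{1-d/2})$ from Proposition \ref{highdimEdnprop}, the resulting asymptotics of $E_d^{\nu}(n)$, and the conditioned joint bound $\gamma_d^{\nu}(i,j)\le\gamma_d^{\nu}(i)\gamma_d^{\tilde\nu}(j-i)$---is uniform in $\nu$. This works because the only place the stationary start enters in Theorem \ref{highdimvdn} is through $\gamma_d(i)$ and $\gamma_d(j)$, and replacing these by $\gamma_d^{\nu}(i)$, $\gamma_d^{\nu}(j)$ changes the relevant sums $E_d(\cdot)$ by a uniform lower-order correction that is absorbed into the existing error terms.

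The paper instead avoids repeating the variance calculation. It writes the law-of-total-variance identity
\[
V_d^{\nu}(n)-\sum_{j}\nu_j V_d^{e_j}(n)=\sum_j\nu_j\bigl(E_d^{e_j}(n)\bigr)^2-\bigl(E_d^{\nu}(n)\bigr)^2,
\]
bounds the right-hand side using only the expectation asymptotics of Proposition \ref{highdimEdnprop}, and thereby reduces the problem to the extreme cases $\nu=e_j$. These are handled by setting $\nu=\mu$ in the same identity: Theorem \ref{highdimvdn} controls $V_d^{\mu}(n)$, and since every $\mu_j>0$ and every $V_d^{e_j}(n)\ge0$, each individual $V_d^{e_j}(n)$ is forced to be $O(n^{1+2/d})$. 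The paper's argument is shorter and purely algebraic (convexity plus positivity), while yours is more hands-on and makes explicit which estimates carry uniformity; both reach the same bound.
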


\begin{proof}
Let us introduce the notation $E ^{\nu
}\left[ . \right] $ for the expectation when $%
\varepsilon _{0}\sim \nu $. For convenience, we also write 
$E_d ^{\nu }\left( n \right) $ and $V_d ^{\nu }\left( n \right) $
for the expectation and variance of $L_d \left( n \right) $
when $\varepsilon _{0}\sim \nu $. Obviously,%
\begin{equation}
V_{d}^{\nu }\left( n\right) =E ^{\nu }\left[ \left( L_{d}\left( n\right)
\right) ^{2}\right] -\left( E_{d}^{\nu }\left( n\right) \right) ^{2}.
\label{maj181}
\end{equation}%
On the other hand,%
\begin{equation}
\underset{j=1}{\overset{s}{\sum }}\nu _{j}V_{d}^{e_{j}}\left( n\right) =%
\underset{j=1}{\overset{s}{\sum }}\nu _{j}E ^{e_j } \left[ \left( L_{d}\left(
n\right) \right) ^{2}\right] -\underset{j=1}{\overset{s}{\sum }}\nu
_{j}\left( E_{d}^{e_{j}}\left( n\right) \right) ^{2}.  \label{maj182}
\end{equation}%
Since $E ^{\nu }\left[ \left( L_{d}\left( n\right) \right) ^{2}\right] =%
\underset{j=1}{\overset{s}{\sum }}\nu _{j}E ^{e_j} \left[ \left( L_{d}\left(
n\right) \right) ^{2}\right] $, subtracting (\ref{maj182}) from (\ref{maj181}%
), we conclude%
\begin{eqnarray}
V_{3}^{\nu }\left( n\right) -\underset{j=1}{\overset{s}{\sum }}\nu
_{j}V_{3}^{e_{j}}\left( n\right) &=&O\left( n^{3/2}\right) ,  \label{maj183}
\\
V_{d}^{\nu }\left( n\right) -\underset{j=1}{\overset{s}{\sum }}\nu
_{j}V_{d}^{e_{j}}\left( n\right) &=&O\left( n\log n\right) \quad d\geq 4.
\label{maj184}
\end{eqnarray}%
It is clear that the great order on the right hand side is uniform in $\nu $%
. In the sense of (\ref{maj183}) and (\ref{maj184}) it is enough to prove
the statement for $\nu =e_{j},\left( j=1,...,s\right) $. To do so,
substitute $\mu =\nu $ to (\ref{maj183}) and (\ref{maj184}) and use Theorem %
\ref{highdimvdn}\ to infer%
\begin{equation*}
\underset{j=1}{\overset{s}{\sum }}\mu _{j}V_{3}^{e_{j}}\left( n\right)
=O\left( n^{1+\frac{d}{2}}\right) ,\quad d\geq 3.
\end{equation*}%
Since for all $d,j$ and $n$ $\mu _{j}$ and $V_{d}^{e_{j}}\left( n\right) $
are non negative, we have proved the statement for all $e_{j}$.
\end{proof}

\begin{corollary}
\label{highdimweaklaw}For RWwIS in $d\geq 3$ the weak law of large numbers
holds, namely%
\begin{equation*}
P\left( \left\vert L_{d}\left( n\right) -E_{d}\left( n\right) \right\vert
>\varepsilon E_{d}\left( n\right) \right) \rightarrow 0
\end{equation*}%
for $\forall \varepsilon >0$.
\end{corollary}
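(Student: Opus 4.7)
The plan is to deduce the weak law directly from Chebyshev's inequality, using the first- and second-moment estimates already established. From Theorem \ref{highdimEdn} (or its extension to arbitrary initial distribution, Proposition \ref{highdimEdnprop}) we have $E_d(n) = n\gamma_d + o(n)$, so $E_d(n)^2 = \gamma_d^2 n^2 (1+o(1))$. From Theorem \ref{highdimvdn} (respectively Proposition \ref{highdimvdnprop}), $V_d(n) = O(n^{1+2/d})$. Chebyshev then yields
\begin{equation*}
P\bigl( |L_d(n) - E_d(n)| > \varepsilon E_d(n) \bigr) \leq \frac{V_d(n)}{\varepsilon^2 E_d(n)^2} = O\!\left( \frac{n^{1+2/d}}{n^2} \right) = O\bigl( n^{2/d-1} \bigr).
\end{equation*}
Since $d \geq 3$, the exponent satisfies $2/d - 1 \leq -1/3 < 0$, so the right-hand side tends to $0$ as $n \to \infty$ for every fixed $\varepsilon > 0$.

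The only point that needs verification is that $\gamma_d > 0$, since otherwise the division by $E_d(n)^2$ would not deliver the claimed decay. This follows from the transience of the walk in dimensions $d \geq 3$: by the local limit theorem (Proposition \ref{limitpropmoredimension}, or Theorem 5.2 of \cite{Kramli-Szasz} applied to the reversed walk via Proposition \ref{Proprevw}), the return probabilities satisfy $P(\widetilde{\eta}_n = 0 \mid \widetilde{\xi}_0 = (0,j)) = O(n^{-d/2})$, which is summable. Hence with strictly positive probability the reversed walk never returns to the origin, and this probability is precisely $\gamma_d = \lim_n \gamma(n)$.

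There is no real obstacle here: once Theorems \ref{highdimEdn} and \ref{highdimvdn} are in place, the corollary is a one-line Chebyshev argument, and the positivity of $\gamma_d$ is a standard consequence of transience.
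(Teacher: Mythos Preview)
Your proof is correct and follows exactly the paper's approach: the paper's proof is a single line invoking Chebyshev's inequality from $V_d(n)=o(n^2)$, and you have filled in the details, including the explicit rate $O(n^{2/d-1})$ and the verification that $\gamma_d>0$ (which the paper takes for granted, though it is implicit in the identification $\gamma = P(\eta_k\neq 0:\ k\geq 1\mid \varepsilon_0\sim\mu)$ given later in the section).
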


\begin{proof}
Since $V_{d}(n)=o\left( n^{2}\right) $ Chebyshev's inequality applies (just
like in \cite{Dv-Er}).
\end{proof}

From Theorem \ref{highdimvdn} one can deduce even strong law of large
numbers:

\begin{theorem}
\label{highdimstronglaw}For RWwIS in $d\geq 3$ strong law of large numbers
holds, namely%
\begin{equation*}
P\left( \underset{n\rightarrow \infty }{\lim }\frac{L_{d}(n)}{E_{d}(n)}%
=1\right) =1.
\end{equation*}
\end{theorem}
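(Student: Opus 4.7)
The plan is to combine the variance bound of Theorem \ref{highdimvdn} with Chebyshev's inequality and the Borel-Cantelli lemma, passing to a polynomial subsequence in order to obtain summability, and then to use the monotonicity of $L_d(n)$ in $n$ to transfer the convergence from the subsequence to the full sequence. This is the standard route from a polynomial variance bound to a strong law, as in \cite{Dv-Er}.

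Concretely, by Chebyshev's inequality together with Theorems \ref{highdimEdn} and \ref{highdimvdn}, for every fixed $\varepsilon > 0$,
\begin{equation*}
P\left( \left\vert \frac{L_d(n)}{E_d(n)} - 1 \right\vert > \varepsilon \right) \leq \frac{V_d(n)}{\varepsilon ^2 E_d(n)^2} = O\left( n^{(2-d)/d} \right),
\end{equation*}
since $E_d(n) = \Theta (n)$. For $d = 3$ the exponent is only $-1/3$, so this tail is not summable in $n$ and a direct Borel-Cantelli argument fails. I would therefore fix an integer $\alpha > d/(d-2)$ (any $\alpha \geq 4$ works uniformly for all $d \geq 3$) and set $n_k = k^{\alpha }$. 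Then
\begin{equation*}
\sum_{k=1}^{\infty } P\left( \left\vert \frac{L_d(n_k)}{E_d(n_k)} - 1 \right\vert > \varepsilon \right) = O\left( \sum_{k=1}^{\infty } k^{\alpha (2-d)/d} \right) < \infty,
\end{equation*}
so Borel-Cantelli, combined with the arbitrariness of $\varepsilon > 0$ (applied along $\varepsilon = 1/m$, $m \in \mathbb{N}$), gives $L_d(n_k)/E_d(n_k) \to 1$ almost surely.

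To extend the convergence from $(n_k)$ to all $n$, I would use the fact that $L_d(n)$ is nondecreasing in $n$, and so is $E_d(n)$. For $n_k \leq n < n_{k+1}$ this yields
\begin{equation*}
\frac{L_d(n_k)}{E_d(n_k)} \cdot \frac{E_d(n_k)}{E_d(n_{k+1})} \leq \frac{L_d(n)}{E_d(n)} \leq \frac{L_d(n_{k+1})}{E_d(n_{k+1})} \cdot \frac{E_d(n_{k+1})}{E_d(n_k)}.
\end{equation*}
Since $n_{k+1}/n_k = (1 + 1/k)^{\alpha } \to 1$ and $E_d(n) \sim \gamma_d n$ by Theorem \ref{highdimEdn}, both deterministic ratios $E_d(n_k)/E_d(n_{k+1})$ and $E_d(n_{k+1})/E_d(n_k)$ tend to $1$; together with the almost sure convergence of $L_d(n_k)/E_d(n_k)$, this gives $L_d(n)/E_d(n) \to 1$ almost surely.

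The only real obstacle is the failure of the Chebyshev tail to be summable in $n$ for $d \in \{3,4\}$, and this is resolved by the polynomial thinning described above; no ingredient beyond the variance bound, Borel-Cantelli, and monotonicity is needed.
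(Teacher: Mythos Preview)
Your argument is correct and follows the same route as the paper: a Chebyshev/Borel--Cantelli argument along a polynomial subsequence, followed by a monotonicity sandwich to pass to the full sequence, exactly in the spirit of \cite{Dv-Er}. The paper's reference to two parameters $\alpha,\beta$ from \cite{Dv-Er} is just a slightly different bookkeeping of the same scheme; your single exponent $\alpha>d/(d-2)$ together with $E_d(n)\sim\gamma_d n$ is all that is needed here.
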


Theorem \ref{highdimstronglaw} can be proved almost the same way as it was
done in \cite{Dv-Er}. The difference is that if we have $V_{d}\left(
n\right) =O\left( n^{\tau }\right) $ with some $\tau <2$, then we have to
choose parameters $\alpha $ and $\beta $ to fulfill

\begin{eqnarray*}
\frac{1+\tau }{3} &<&\alpha <1 \\
\frac{1}{2\alpha -\tau } &<&\beta <\frac{1}{1-\alpha }.
\end{eqnarray*}%
After it, the argument of \cite{Dv-Er} works. So the main point is that we
should have some $\tau <2$ such that $V_{d}\left( n\right) =O\left( n^{\tau
}\right) $ as it was mentioned at the beginning of the Section.

Identifying the constant $\gamma $ is an interesting question, though we
cannot give a closed formula in the general case.

We only know that for the constant $\gamma $ we have 
\begin{equation}
\gamma =P(\eta _{k}\neq 0:k\geq 1|\varepsilon _{0}\sim \mu ) .
\label{gammaderteke}
\end{equation}

To see this, first, observe that the constant $\gamma $ is the same for the
primary and the reversed walk. We have seen that%
\begin{equation*}
\gamma (n)=P(\widetilde{\eta }_{j}\neq 0\quad j=1,..n).
\end{equation*}%
Taking $n\rightarrow \infty $, (\ref{gammaderteke}) follows.

\section{Visited points in two dimensions}
\label{Secttwodim}

In this section we calculate $E_{2}\left( n\right) $ and estimate $%
V_{2}\left( n\right) $. The arguments (assuming that $\varepsilon _{0}\sim
\mu $) are similar to the ones of Theorem \ref{highdimEdn} and \ref%
{highdimvdn}, or \cite{Dv-Er} Theorem 1 and Theorem 2. The computations are
longer than in \cite{Dv-Er}. We have to write the renewal equation in terms
of vectors and matrices, which is a new idea, and we use the above proved
Proposition \ref{limitpropmoredimension} because\ it is essential that the
remainder term of the probability of returning to the origin should be
summable, which was trivial in the case of \cite{Dv-Er}. We have to consider
the case of arbitrary initial distribution, separately, just like in Section %
\ref{Secthighdim}. In this case, we formulate the fact that after some steps
the distribution of $\varepsilon $ will be very close to $\mu $.

\begin{theorem}
\label{twodimEdn}Let $d=2$. Assuming that $\varepsilon _{0}\sim \mu $ and
that (\ref{harmadikmomentum}) exists, we have%
\begin{equation*}
E_{2}\left( n\right) =\frac{2\pi \sqrt{\left\vert \sigma \right\vert }n}{%
\log n}+O\left( \frac{n\log \log n}{\log ^{2}n}\right) .
\end{equation*}
\end{theorem}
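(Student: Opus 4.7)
My approach mirrors the high-dimensional argument of Theorem \ref{highdimEdn} combined with the Dvoretzky-Erd\H{o}s trick for planar SSRW, the new ingredient being the summable local limit remainder from Proposition \ref{limitpropmoredimension}. I would first reuse the reversed-walk renewal scheme from Theorem \ref{highdimEdn}: with $U_k \in \mathbb{R}^{s \times s}$ and $R_k \in \mathbb{R}^s$ defined as there, $\sum_{k=0}^n U_k R_{n-k} = \underline{1}$ and $\gamma(n) = \langle R_n, \mu\rangle$. Specializing Proposition \ref{limitpropmoredimension} to $d=2$, $x=0$ gives $(U_k)_{i,j} = \frac{\mu_j}{2\pi\sqrt{|\sigma|}\,k} + O(k^{-3/2})$ uniformly in $i,j$; since the remainder is summable,
\begin{equation*}
G_n := \sum_{k=0}^n U_k = \frac{\log n}{2\pi\sqrt{|\sigma|}}\,\underline{1}\mu^T + D + O(n^{-1/2})
\end{equation*}
for a fixed matrix $D$. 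This is exactly the refinement that Proposition \ref{limitpropmoredimension} provides beyond Theorem 5.2 of \cite{Kramli-Szasz}.

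For the upper bound, componentwise monotonicity of $R_k$ in $k$ gives $G_n R_n \leq \underline{1}$ componentwise, hence for every $i$
\begin{equation*}
\frac{\log n}{2\pi\sqrt{|\sigma|}}\gamma(n) + (DR_n)_i + O(n^{-1/2}) \leq 1.
\end{equation*}
Since $(R_n)_j \leq 1$, this first yields the rough bound $\gamma(n) = O(1/\log n)$; combined with $\gamma(n) \geq \mu_{\min} \|R_n\|_\infty$ (which is valid because $\mu$ is strictly positive) it promotes to $\|R_n\|_\infty = O(1/\log n)$, and feeding back gives $\gamma(n) \leq \frac{2\pi\sqrt{|\sigma|}}{\log n} + O(1/\log^2 n)$.

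The matching lower bound is the main technical point and follows the Dvoretzky-Erd\H{o}s trick applied at a carefully tuned time. Fix $N$ and set $n := N\lfloor\log N\rfloor$. Splitting the renewal identity and using $R_{n-k} \leq R_N$ for $k \leq n-N$ (and $R_{n-k} \leq \underline{1}$ otherwise),
\begin{equation*}
\underline{1} \leq G_{n-N} R_N + (G_n - G_{n-N})\underline{1}.
\end{equation*}
Since $\log(n/(n-N)) = O(1/\log N)$, the second term is $O(1/\log N)\underline{1}$, giving $G_{n-N}R_N \geq (1-O(1/\log N))\underline{1}$ componentwise. Left-multiplying by $\mu^T$ and using $\mu^T G_{n-N} = \frac{\log(n-N)}{2\pi\sqrt{|\sigma|}}\mu^T + O(1)$ together with $\log(n-N) = \log N + \log\log N + O(1/\log N)$ yields $\gamma(N) \geq \frac{2\pi\sqrt{|\sigma|}}{\log N}\bigl(1 - O(\log\log N / \log N)\bigr)$. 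Then summing $E_2(n) = \sum_{k=0}^n \gamma(k)$ against the two-sided bound, together with the elementary estimate $\sum_{k=2}^n 1/\log k = n/\log n + O(n/\log^2 n)$, completes the proof.

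The principal obstacle is the lower bound: the time $n = N\log N$ must be tuned just right --- large enough that $(G_n-G_{n-N})\underline{1}$ is negligible relative to $1$, yet small enough that $\log(n-N) = \log N\bigl(1 + O(\log\log N/\log N)\bigr)$ still holds; the matrix $D$ and the internal-state dependence in $R_N$ must only enter at lower order. The identity $\mu^T\underline{1} = 1$ is the key structural observation that projects the vector-valued renewal equation onto the scalar Dvoretzky-Erd\H{o}s analysis at leading order.
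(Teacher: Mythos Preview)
Your proof is correct and follows essentially the same route as the paper's: the reversed-walk renewal scheme, the summable remainder from Proposition \ref{limitpropmoredimension}, the monotonicity upper bound with a bootstrap to $\|R_n\|_\infty=O(1/\log n)$, and the Dvoretzky--Erd\H{o}s splitting for the lower bound. The only cosmetic differences are that you left-multiply by $\mu^T$ rather than $\tfrac{1}{s}\underline{1}^T$ (which is a bit cleaner, since it yields $\gamma$ directly) and that you parameterize the lower bound by the target time $N$ with auxiliary $n=N\lfloor\log N\rfloor$, whereas the paper parameterizes by $n$ and sets $k=\lfloor n-n/\log n\rfloor$; these are equivalent choices.
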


\begin{proof}
As in the proof of Theorem \ref{highdimEdn}, we examine the reversed RWwIS
and write the renewal equation%
\begin{equation}
\overset{n}{\underset{k=0}{\sum }}U_{k}\cdot R_{n-k}=\underline{1}.
\label{renewelequtwodim}
\end{equation}%
Proposition \ref{limitpropmoredimension} yields%
\begin{equation*}
\left( U_{k}\right) _{i,j}=\frac{1}{2\pi \sqrt{\left\vert \sigma \right\vert 
}}\mu _{j}\frac{1}{k}+O\left( k^{-3/2}\right) ,
\end{equation*}%
thus%
\begin{equation}
\left( \underset{k=0}{\overset{n}{\sum }}U_{k}\right) _{i,j}=\frac{1}{2\pi 
\sqrt{\left\vert \sigma \right\vert }}\mu _{j}\log \left( c_{i,j}n\right)
+O\left( n^{-1/2}\right) .  \label{sumofun}
\end{equation}

Our purpose is to estimate $\left\langle R_{n},\mu \right\rangle =\gamma (n)$%
. Exactly as in the high dimensional case, $R_{n}$ is decreasing, so (\ref%
{renewelequtwodim}) yields%
\begin{equation}
\left( \frac{1}{s}\underline{1}\right) ^{T}\cdot \left( \overset{k}{\underset%
{l=0}{\sum }}U_{l}\right) \cdot R_{n-k}+\left( \frac{1}{s}\underline{1}%
\right) ^{T}\cdot \left( \overset{n}{\underset{l=k+1}{\sum }}U_{l}\right)
\cdot \underline{1}\geq 1.  \label{twodimlowebound1}
\end{equation}%
Let $k\rightarrow \infty $, $n\rightarrow \infty $. The relation between $k$
and $n$ will be fixed later. From (\ref{sumofun}) it follows that%
\begin{equation}
\left[ \left( \frac{1}{s}\underline{1}\right) ^{T}\cdot \left( \overset{k}{%
\underset{l=0}{\sum }}U_{l}\right) \right] _{j}=\frac{1}{2\pi \sqrt{%
\left\vert \sigma \right\vert }}\mu _{j}\log \left( \widehat{c}_{j}k\right)
+O\left( k^{-1/2}\right)  \label{twodimlowebound2}
\end{equation}%
for some $\widehat{c}_{j}$. So we have for $k<n$%
\begin{equation}
\left[ \left( \frac{1}{s}\underline{1}\right) ^{T}\cdot \left( \overset{n}{%
\underset{l=k+1}{\sum }}U_{l}\right) \right] _{j}=\frac{1}{2\pi \sqrt{%
\left\vert \sigma \right\vert }}\mu _{j}\log \frac{n}{k}+O\left(
k^{-1/2}\right) .  \label{twodimloebound3}
\end{equation}%
Substituting (\ref{twodimlowebound2}) and (\ref{twodimloebound3}) to the
left hand side of (\ref{twodimlowebound1}) we get%
\begin{eqnarray}
&&\overset{s}{\underset{j=1}{\sum }}\left[ \frac{1}{2\pi \sqrt{\left\vert
\sigma \right\vert }}\mu _{j}\log \left( \widehat{c}_{j}k\right) +O\left(
k^{-1/2}\right) \right] \left( R_{n-k}\right) _{j} \label{twodimlowebound4} \\
&+&\overset{s}{\underset{j=1}%
{\sum }}\frac{1}{2\pi \sqrt{\left\vert \sigma \right\vert }}\mu _{j}\log 
\frac{n}{k}+O\left( k^{-1/2}\right) .  \notag
\end{eqnarray}%
Put $k=\left\lfloor n-\frac{n}{\log n}\right\rfloor $. This yields $\log
k\sim \log \left( n-k\right) $. Using the fact $\gamma (n-k)=\underset{j=1}{\overset{s%
}{\sum }}\mu _{j}\left( R_{n-k}\right) _{j}$, (\ref{twodimlowebound4}) can
be written as%
\begin{eqnarray}
&&\gamma \left( n-k\right) \left[ \frac{1}{2\pi \sqrt{\left\vert \sigma
\right\vert }}\log k\right] + \label{twodimlowebound45} \\
&&\overset{s}{\underset{j=1}{\sum }}\left[ \frac{%
1}{2\pi \sqrt{\left\vert \sigma \right\vert }}\mu _{j}\log \widehat{c}%
_{j}+O\left( k^{-1/2}\right) \right] \left( R_{n-k}\right) _{j}
+C\log \frac{n}{k}+O\left( k^{-1/2}\right) .  \notag
\end{eqnarray}%
Since $\log \frac{n}{k}\rightarrow 0$, and $\left( R_{n-k}\right)
_{j}\rightarrow 0$, as $n-k\rightarrow \infty $ (the latter is the
recurrence property of the two dimensional RWwIS, which is proved in \cite%
{Telcs}), it follows that%
\begin{equation}
\gamma \left( n-k\right) \geq \frac{2\pi \sqrt{\left\vert \sigma \right\vert 
}}{\log k}+o\left( \frac{1}{\log k}\right) .  \label{twodimlowebound5}
\end{equation}%
Hence, by the choice of $k$,%
\begin{equation}
\gamma \left( n-k\right) \geq \frac{2\pi \sqrt{\left\vert \sigma \right\vert 
}}{\log \left( n-k\right) }+o\left( \frac{1}{\log \left( n-k\right) }\right)
.  \label{twodimlowebound6}
\end{equation}%
Now let us give an upper estimation to $\gamma (n)$. From (\ref%
{renewelequtwodim}) it follows that%
\begin{equation*}
\left( \overset{n}{\underset{k=0}{\sum }}U_{k}\right) \cdot R_{n}\leq 
\underline{1}.
\end{equation*}%
Multiplying by the vector $\frac{1}{s}\underline{1}$, we get%
\begin{equation*}
\overset{s}{\underset{j=1}{\sum }}\left[ \frac{1}{2\pi \sqrt{\left\vert
\sigma \right\vert }}\mu _{j}\log \left( \widehat{c}_{j}n\right) +O\left(
n^{-1/2}\right) \right] \left( R_{n}\right) _{j}\leq 1,
\end{equation*}%
thus%
\begin{eqnarray*}
S_{1}&+&S_{2}+S_{3}
:=\frac{1}{2\pi \sqrt{\left\vert \sigma \right\vert }}\overset{s}{\underset%
{j=1}{\sum }}\mu _{j}\left( R_{n}\right) _{j}\log n \\
&+&\frac{1}{2\pi \sqrt{%
\left\vert \sigma \right\vert }}\overset{s}{\underset{j=1}{\sum }}\mu
_{j}\left( R_{n}\right) _{j}\log \widehat{c}_{j}+\underset{j=1}{\overset{s}{%
\sum }}O\left( n^{-1/2}\right) \left( R_{n}\right) _{j}\leq 1.
\end{eqnarray*}%
Since $\left( R_{n}\right) _{j}\rightarrow 0$, it follows that $%
S_{2}+S_{3}=o(1)$. So we have the upper estimation%
\begin{equation}
\gamma \left( n\right) \leq \frac{2\pi \sqrt{\left\vert \sigma \right\vert }%
}{\log n}+o\left( \frac{1}{\log n}\right) .  \label{twodimupperbound1}
\end{equation}%
From (\ref{twodimlowebound6}) and (\ref{twodimupperbound1}) we get%
\begin{equation}
\gamma \left( n\right) =\frac{2\pi \sqrt{\left\vert \sigma \right\vert }}{%
\log n}+o\left( \frac{1}{\log n}\right) .  \label{twodimeasyest}
\end{equation}%
Unfortunately, the estimation (\ref{twodimeasyest}) is not good enough for
our purposes (but observe that we have not really used (\ref%
{limitpropmoredimension}) yet). Now, (\ref{twodimupperbound1}) yields $\left(
R_{n}\right) _{j}=O\left( \frac{1}{\log n}\right) $ for all $1\leq j\leq s$.
Hence, with the previous notation, $S_{2}=O\left( \frac{1}{\log n}\right) $.
Obviously $S_{3}=O\left( \frac{1}{\log n}\right) $. Thus we arrived at%
\begin{equation}
\gamma (n)\leq \frac{2\pi \sqrt{\left\vert \sigma \right\vert }}{\log n}%
+O\left( \frac{1}{\log ^{2}n}\right) .  \label{twodimsharpupperbound}
\end{equation}%
This estimation will be sharp enough.

Now, we have to improve our lower estimation. From (\ref{twodimeasyest}) and
(\ref{twodimlowebound45}) it follows that 
\begin{equation*}
\gamma \left( n-k\right) \left[ \frac{1}{2\pi \sqrt{\left\vert \sigma
\right\vert }}\log k+O\left( 1\right) \right] +C\log \frac{n}{k}+O\left( k^{-%
\frac{1}{2}}\right) \geq 1,
\end{equation*}%
thus%
\begin{equation*}
\gamma \left( n-k\right) \log \left( n-k\right) \geq \left( 2\pi \sqrt{%
\left\vert \sigma \right\vert }-C2\pi \sqrt{\left\vert \sigma \right\vert }%
\log \frac{n}{k}+O\left( k^{-\frac{1}{2}}\right) \right) \frac{\log \left(
n-k\right) }{\log k+O(1)}.
\end{equation*}%
Now, similarly to the case of \cite{Dv-Er}, it follows that%
\begin{equation}
\gamma \left( n\right) =\frac{2\pi \sqrt{\left\vert \sigma \right\vert }}{%
\log n}+O\left( \frac{\log \log n}{\log ^{2}n}\right) .  \label{twodimend1}
\end{equation}%
Now, an elementary calculation completes the proof.
\end{proof}

As in the high dimensional case, the initial distribution does not influence
the asymptotic behavior. More precisely

\begin{proposition}
\label{twodimednprop}The assertion of Theorem \ref{twodimEdn} remains true
when the distribution of $\varepsilon _{0}$ is arbitrary.
\end{proposition}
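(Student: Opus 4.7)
The plan is to follow the strategy of Proposition \ref{highdimEdnprop} and transfer the asymptotic for $\gamma(n)$ established in Theorem \ref{twodimEdn} from the stationary initial distribution $\mu$ to an arbitrary $\nu$, by exploiting the exponential convergence of the internal Markov chain $(\varepsilon_n)$ to $\mu$. By linearity it suffices to treat the pure initial state $\nu = e_j$: if I show the pointwise estimate
\[
\gamma^{e_j}(n) = \frac{2\pi\sqrt{|\sigma|}}{\log n} + O\!\left(\frac{\log\log n}{\log^2 n}\right)
\]
for each $j = 1,\ldots,s$, then summing from $1$ to $n$ and taking convex combinations with weights $\nu_j$ yields the claimed asymptotic for $E_2^\nu(n)$ uniformly in $\nu$.

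The central step is a renewal-style decomposition. Fix an auxiliary scale $K=K(n)$ with $1 \ll K \ll n$. Conditioning on $\varepsilon_K$ and splitting the event $\{\eta_n \notin \{\eta_0,\ldots,\eta_{n-1}\}\}$ according to whether $\eta_n$ coincides with some point visited in the initial segment $[0,K]$, one obtains
\[
\gamma^{e_j}(n) = \sum_{k=1}^{s} P(\varepsilon_K = k | \varepsilon_0 = j)\,\gamma^{e_k}(n - K) + O\!\left(\frac{K}{n}\right),
\]
where the error comes from a union bound over the at most $K+1$ points visited in $[0,K]$, each hit at time $n$ with probability $O(1/n)$ by Proposition \ref{limitpropmoredimension}. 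Using exponential ergodicity $P(\varepsilon_K = k | \varepsilon_0 = j) = \mu_k + O(e^{-cK})$ together with $\sum_k \mu_k \gamma^{e_k}(n-K) = \gamma^\mu(n-K) = \gamma(n-K)$, the decomposition collapses to
\[
\gamma^{e_j}(n) = \gamma(n-K) + O(e^{-cK}) + O(K/n).
\]

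Finally I choose $K(n)$ to balance the error terms. A derivative estimate applied to Theorem \ref{twodimEdn} gives $\gamma(n-K) - \gamma(n) = O(K/(n \log^2 n))$, so any moderately growing choice, say $K = \lfloor \log n \rfloor$, makes all three error contributions $O(e^{-cK})$, $O(K/n)$ and $O(K/(n \log^2 n))$ much smaller than the target $\log\log n/\log^2 n$; plugging in the main asymptotic for $\gamma(n)$ then finishes the pointwise bound, and summation gives the proposition. The only genuinely non-trivial ingredient, and hence the principal obstacle, is the uniform $O(1/n)$ control on the transition probability that underwrites the estimate $p(K,n)=O(K/n)$ — this is precisely the content of Proposition \ref{limitpropmoredimension}, which is exactly why that refinement of the local limit theorem was needed in the two-dimensional setting.
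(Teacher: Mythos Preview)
Your proof is correct and follows essentially the same route as the paper's: condition on $\varepsilon_K$, use exponential ergodicity of the internal chain to replace the law of $\varepsilon_K$ by $\mu$, bound the correction $p(K,n)=O(K/(n-K))$ via the local limit theorem, and absorb the shift $n\mapsto n-K$ in the leading term. The paper takes $K=\lfloor\sqrt{n}\rfloor$ while you take $K=\lfloor\log n\rfloor$; either choice works.

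Two minor inaccuracies are worth flagging. First, the claimed ``derivative estimate'' $\gamma(n-K)-\gamma(n)=O(K/(n\log^2 n))$ does not follow from the asymptotic $\gamma(n)=C/\log n+O(\log\log n/\log^2 n)$, because the error term need not vary that smoothly. What you actually need (and what the paper uses as its term $IV$) is only the difference of the main terms, $C/\log(n-K)-C/\log n=O(K/(n\log^2 n))$, together with the trivial fact that the error at $n-K$ is still $O(\log\log n/\log^2 n)$; this suffices. Second, the $O(1/n)$ bound on the transition probability that gives $p(K,n)=O(K/n)$ already follows from the basic local limit theorem (Theorem~1, i.e.\ Kr\'amli--Sz\'asz Theorem~5.2), just as in Proposition~\ref{highdimEdnprop}; Proposition~\ref{limitpropmoredimension} is overkill here. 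Its genuine role in the two-dimensional analysis is inside Theorem~\ref{twodimEdn}, where one needs the remainder in $(U_k)_{i,j}$ to be summable.
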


\begin{proof}
The proof is very similar to the one of Proposition \ref{highdimEdnprop}. We
know that 
\begin{equation*}
\gamma \left( n\right) =\frac{2\pi \sqrt{\left\vert \sigma \right\vert }}{%
\log n}+O\left( \frac{\log \log n}{\log ^{2}n}\right) .
\end{equation*}%
With the notation $\gamma ^{e_{j}}\left( n\right) =\frac{2\pi \sqrt{%
\left\vert \sigma \right\vert }}{\log n}+h^{j}\left( n\right) $ our aim is
to prove $h^{j}\left( n\right) =O\left( \frac{\log \log n}{\log ^{2}n}%
\right) $. The analogue of (\ref{highprop1}) is%
\begin{eqnarray*}
&&\frac{2\pi \sqrt{\left\vert \sigma \right\vert }}{\log n}+h^{j}\left(
n\right) \\ 
&=&\overset{s}{\underset{k=1}{\sum }}\left[ \left( \mu
_{k}+b_{k}^{j}(K)\right) \left( \frac{2\pi \sqrt{\left\vert \sigma
\right\vert }}{\log \left( n-K\right) }+h^{k}\left( n-K\right) \right) %
\right] +O\left( K\cdot \left( n-K\right) ^{-1}\right) ,
\end{eqnarray*}%
and the analogue of (\ref{highprop2}) is%
\begin{eqnarray*}
h^{j}(n) &=&\overset{s}{\underset{k=1}{\sum }}\mu _{k}h^{k}(n-K)+\overset{s}%
{\underset{k=1}{\sum }}b_{k}^{j}(K)h^{k}(n-K)+O\left( K\cdot \left(
n-K\right) ^{-1}\right) \\
&&+\left( \frac{2\pi \sqrt{\left\vert \sigma \right\vert }}{\log \left(
n-K\right) }-\frac{2\pi \sqrt{\left\vert \sigma \right\vert }}{\log n}\right)
\\
&=&:I+II+III+IV.
\end{eqnarray*}%
With the choice $K\left( n\right) =\left\lfloor \sqrt{n}\right\rfloor $
elementary calculations show that $I+II+III+IV\leq O\left( \frac{\log \log n%
}{\log ^{2}n}\right) $.
\end{proof}

Now let us see the estimation of the variance.

\begin{theorem}
\label{twodimvdn}If (\ref{harmadikmomentum}) exists, then we have with
arbitrary $\nu $\ distribution of $\varepsilon _{0}$ 
\begin{equation*}
V_{2}(n)=O\left( \frac{n^{2}\log \log n}{\log ^{3}n}\right) .
\end{equation*}%
Moreover, the great order is uniform in $\nu $.
\end{theorem}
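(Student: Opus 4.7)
The plan is to adapt the strategy of Theorem \ref{highdimvdn} to the two-dimensional setting. First, I would reduce to the stationary initial distribution: arguing as in Proposition \ref{highdimvdnprop}, the identity
\begin{equation*}
V_2^\nu(n) - \sum_{j=1}^s \nu_j V_2^{e_j}(n) = \sum_{j=1}^s \nu_j (E_2^{e_j}(n))^2 - (E_2^\nu(n))^2,
\end{equation*}
together with the uniform-in-$\nu$ version of Proposition \ref{twodimednprop} (which is immediate from its proof, since $h^j(n)=O(\log\log n/\log^2 n)$ is obtained for each of the finitely many initial states $e_j$), shows that the left hand side is $o(n^2 \log\log n/\log^3 n)$ uniformly in $\nu$. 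It thus suffices to establish the bound for $\varepsilon_0 \sim \mu$.

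Next, following Theorem \ref{highdimvdn}, I would fix $K=K(n)=\lfloor n^\alpha \rfloor$ for some small $\alpha \in (0,1)$ and split $V_2(n) \leq 2S_1 + 2S_2 + E_2(n)$, where $S_2$ collects pairs $i<j<i+K$ and $S_1$ collects pairs $j \geq i+K$. The trivial estimate $\gamma(i,j) \leq \gamma(i)$ gives $S_2 \leq K E_2(n) = O(n^{1+\alpha}/\log n) = o(n^2 \log\log n/\log^3 n)$. For $S_1$, conditioning on the walk up to time $i$ forces $\varepsilon_i$ into some conditional distribution $\widetilde{\mu}_i$ depending on the event $\{\eta_i \notin \{\eta_0,\dots,\eta_{i-1}\}\}$, and the Markov property yields $\gamma(i,j) \leq \gamma(i)\gamma^{\widetilde{\mu}_i}(j-i)$. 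The uniformity of Proposition \ref{twodimednprop} then gives
\begin{equation*}
\gamma(i,j) - \gamma(i)\gamma(j) \leq \gamma(i)[\gamma(j-i) - \gamma(j)] + \gamma(i)\, O\!\left(\frac{\log\log(j-i)}{\log^2(j-i)}\right).
\end{equation*}

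Summing in $j \in [i+K, n]$, the first piece telescopes into $\gamma(i)[E_2(n-i) + E_2(i+K-1) - E_2(n) - E_2(K-1)]$. Plugging in Theorem \ref{twodimEdn}, the leading $CN/\log N$ terms (with $C = 2\pi \sqrt{|\sigma|}$) produce only an $O(n/\log^2 n)$ residue, while the $O(N \log\log N/\log^2 N)$ remainders add up to $O(n \log\log n/\log^2 n)$. The second piece is likewise dominated by $\gamma(i)\cdot O(n \log\log n/\log^2 n)$, since $\sum_{m=K}^{n} \log\log m/\log^2 m = O(n \log\log n/\log^2 n)$. Summing over $i \in [0, n-K]$ and using $\sum_i \gamma(i) = E_2(n) = O(n/\log n)$, we obtain $S_1 = O(n^2 \log\log n/\log^3 n)$, which matches the target.

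The main obstacle is the careful bookkeeping of logarithmic error terms: the target sits only a factor $\log\log n$ below the potentially dangerous main term $n^2/\log^2 n$, so the telescoping cancellation in $E_2(n-i)+E_2(i+K-1)-E_2(n)-E_2(K-1)$ must be verified uniformly in $i$, and every appearance of the $O(\log\log m/\log^2 m)$ remainder from Proposition \ref{twodimednprop} must be controlled tightly to avoid spoiling the bound.
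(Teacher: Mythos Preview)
Your approach is correct and follows the paper's proof essentially step for step: both arguments split $V_2(n)$ into near-diagonal pairs $S_2$ and far pairs $S_1$, bound $\gamma(i,j)\le\gamma(i)\gamma^{\widetilde\mu_i}(j-i)$ via the Markov property, and then invoke the uniform-in-$\nu$ remainder $\gamma^\nu(m)=\gamma(m)+O(\log\log m/\log^2 m)$ extracted from the proof of Proposition \ref{twodimednprop}; the reduction to the stationary start is handled exactly as in Proposition \ref{highdimvdnprop}. The only cosmetic differences are that the paper takes $K(n)=\lfloor n/\log^2 n\rfloor$ instead of your $K=\lfloor n^\alpha\rfloor$, and it packages the error multiplicatively as $\gamma^\nu(m)\le(1+\delta)\gamma(m)$ with $\delta=O(\log\log K/\log K)$ rather than additively; both choices lead to the same three-term bound and the same final order.
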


\begin{proof}
First, suppose $\varepsilon _{0}\sim \mu $. The beginning of the proof of
this case is the same as in Theorem \ref{highdimvdn}. The difference is that
when we change $K$ to $K\left( n\right) $, we can write $O\left( \frac{\log
\log K\left( n\right) }{\log K\left( n\right) }\right) $ instead of $\delta $
in the sense of Proposition \ref{twodimednprop}. From now, just like in the
proof of Theorem \ref{highdimvdn}, it is not difficult to deduce that%
\begin{equation*}
O\left( \frac{n}{\log n}\right) \left[ \frac{\log
\log K\left( n\right) }{\log K\left( n\right) }O\left( \frac{n}{\log n}%
\right) +O\left( \frac{n\log \log n}{\log ^{2}n}\right) \right] +K\left(
n\right) O\left( \frac{n}{\log n}\right) 
\end{equation*}%
is an upper bound for $V_{2}\left( n\right)$. Taking $K\left( n\right) =\left\lfloor \frac{n}{\log ^{2}n}\right\rfloor $
proves the statement. For the case of arbitrary initial distribution, one
can repeat the proof of Proposition \ref{highdimvdnprop}.
\end{proof}

\begin{corollary}
For a RWwIS in $d=2$ dimension weak law of large numbers holds.
\end{corollary}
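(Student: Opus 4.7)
The plan is to deduce the weak law from Chebyshev's inequality, exactly parallel to the high dimensional argument in Corollary \ref{highdimweaklaw}. The only thing that needs checking is that the variance bound from Theorem \ref{twodimvdn} is negligible compared to the square of the expectation from Theorem \ref{twodimEdn}, and this is where both of those estimates are simultaneously invoked.

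First I would fix $\varepsilon > 0$ and apply the Chebyshev inequality to the random variable $L_{2}(n)$, obtaining
\begin{equation*}
P\left( \left| L_{2}(n) - E_{2}(n) \right| > \varepsilon E_{2}(n) \right) \leq \frac{V_{2}(n)}{\varepsilon^{2} E_{2}(n)^{2}}.
\end{equation*}
By Theorem \ref{twodimEdn} (combined with Proposition \ref{twodimednprop} for arbitrary initial distribution), $E_{2}(n) \sim 2\pi\sqrt{|\sigma|}\, n/\log n$, so that $E_{2}(n)^{2}$ grows like $n^{2}/\log^{2} n$ up to a positive constant factor. By Theorem \ref{twodimvdn}, $V_{2}(n) = O(n^{2} \log \log n / \log^{3} n)$, uniformly in the initial distribution $\nu$.

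Combining these two estimates gives
\begin{equation*}
\frac{V_{2}(n)}{E_{2}(n)^{2}} = O\!\left( \frac{\log \log n}{\log n} \right) \longrightarrow 0 \quad (n \to \infty),
\end{equation*}
so the right hand side of the Chebyshev bound tends to zero for every fixed $\varepsilon > 0$, which is precisely the weak law. There is no real obstacle here: the entire content of the corollary is bookkeeping once Theorems \ref{twodimEdn} and \ref{twodimvdn} are in hand. The only point worth flagging is that one should invoke the version of Theorem \ref{twodimEdn} valid for arbitrary initial distribution (Proposition \ref{twodimednprop}) and the uniformity statement in Theorem \ref{twodimvdn}, so that the weak law is obtained without restricting $\varepsilon_{0} \sim \mu$.
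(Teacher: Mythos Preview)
Your proof is correct and follows exactly the paper's approach: apply Chebyshev's inequality, noting that $V_{2}(n)=O\!\left(\frac{n^{2}\log\log n}{\log^{3}n}\right)=o\!\left(\frac{n^{2}}{\log^{2}n}\right)=o\!\left(E_{2}(n)^{2}\right)$. The paper's proof is simply the one-line version of what you wrote.
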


\begin{proof}
Since $O\left( \frac{n^{2}\log \log n}{\log ^{3}n}\right) <O\left( \frac{%
n^{2}}{\log ^{2}n}\right) $, Chebyshev's inequality applies.
\end{proof}

The proof of the strong law of large numbers is quite complicated, so we
treat it in a different Section.

\section{Law of large numbers in the plane}
\label{Sectstronglaw}

This Section is dedicated to the strong law in $d=2$.

\begin{theorem}
For any RWwIS in $d=2$, for which (\ref{negyedikmomentum}) exists, strong
law of large numbers holds, namely%
\begin{equation*}
P\left( \underset{n\rightarrow \infty }{\lim }\frac{L_{2}(n)}{E_{2}(n)}%
=1\right) =1.
\end{equation*}
\end{theorem}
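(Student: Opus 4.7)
The plan is to follow the Dvoretzky--Erd\H{o}s scheme: reduce to a Borel--Cantelli argument along a geometric subsequence, combine it with a monotonicity-based bridging step, and let the spacing parameter tend to $1$ along a countable sequence.

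First, the outer scheme and bridging. Fix a rational $\alpha>1$ and set $n_k=\lfloor \alpha^k\rfloor$. Both $L_2(n)$ and $E_2(n)$ are non-decreasing in $n$, so for $n_k\le n\le n_{k+1}$
\[
\frac{L_2(n_k)}{E_2(n_{k+1})} \;\le\; \frac{L_2(n)}{E_2(n)} \;\le\; \frac{L_2(n_{k+1})}{E_2(n_k)}.
\]
By Theorem \ref{twodimEdn}, $E_2(n_{k+1})/E_2(n_k)\to\alpha$. Hence, once we show $L_2(n_k)/E_2(n_k)\to 1$ almost surely along this subsequence, the two-sided bridge yields $1/\alpha \le \liminf_n L_2(n)/E_2(n) \le \limsup_n L_2(n)/E_2(n) \le \alpha$ a.s., and a countable intersection over a sequence $\alpha_m\downarrow 1$ proves the theorem.

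Second, the subsequential a.s. convergence. What is needed is
\[
\sum_{k} P\!\left(\bigl|L_2(n_k)-E_2(n_k)\bigr|>\varepsilon E_2(n_k)\right)<\infty,
\]
so that Borel--Cantelli applies. Straight Chebyshev using Theorem \ref{twodimvdn} gives a tail only of order $V_2(n_k)/E_2(n_k)^2 = O(\log\log n_k/\log n_k) = O(\log k/k)$, whose sum diverges --- just barely. This logarithmic shortfall is precisely why the refined local limit theorem of Section \ref{Sectlimittheo}, and in particular Corollary \ref{cor2010}, was introduced. My plan is to re-run the proof of Theorem \ref{twodimvdn} with Corollary \ref{cor2010} as input in place of the crude estimate coming from Proposition \ref{twodimednprop}. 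This should strip the superfluous $\log\log$ factor and gain one or more additional powers of $\log n$ in the denominator of $V_2$; or --- equivalently for our purpose --- it should yield a fourth-moment estimate $E[(L_2(n)-E_2(n))^4]=O(V_2(n)^2)$ via a four-point covariance computation in which the remainder in Corollary \ref{cor2010} is summable against itself. Either version, evaluated along $n_k=\alpha^k$, produces a summable Chebyshev or Markov tail.

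Third, Borel--Cantelli then produces $L_2(n_k)/E_2(n_k)\to 1$ a.s.\ for each rational $\alpha>1$; the bridging step pinches $L_2(n)/E_2(n)$ into $[1/\alpha,\alpha]$ a.s., and the diagonalization over $\alpha_m\downarrow 1$ finishes the proof. The main obstacle is unambiguously the second step: the ``cumbersome calculation'' alluded to in the introduction amounts to carefully redoing the variance or fourth-moment computation with the sharpened local limit theorem, since the bound of Theorem \ref{twodimvdn} alone is off by exactly one logarithmic factor from what the geometric-subsequence Borel--Cantelli demands.
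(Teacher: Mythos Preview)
Your reduction via a geometric subsequence and a monotonicity bridge is clean, but it diverges from the paper's route at the decisive step. You correctly identify that the variance bound of Theorem~\ref{twodimvdn} misses summability along $n_k=\lfloor\alpha^k\rfloor$ by exactly one logarithmic factor; however, your proposed fix --- ``re-run the proof of Theorem~\ref{twodimvdn} with Corollary~\ref{cor2010} in place of Proposition~\ref{twodimednprop}'' --- is not how the gap is closed, and as stated it is not clear it can work. The bottleneck in the proof of Theorem~\ref{twodimvdn} is the error $\delta=O(\log\log K/\log K)$ in the comparison $\gamma^\nu(k)\le(1+\delta)\gamma(k)$, and that error descends from the second-order term of $\gamma(n)$ itself (the $O(\log\log n/\log^2 n)$ in~(\ref{twodimend1})), not from anything that Corollary~\ref{cor2010} sharpens. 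A genuine improvement of $V_2(n)$ to $O(n^2/\log^4 n)$ is indeed known for SSRW (Jain--Pruitt~\cite{J-P}), but it requires a substantially finer two-point covariance analysis than ``swap in a better local limit estimate''. Your alternative, a fourth-moment bound, is stated only as a wish.

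The paper does \emph{not} try to improve $V_2(n)$. It follows the Dvoretzky--Erd\H{o}s block scheme: cut $[0,n]$ into $K=\lfloor\log\log n\rfloor$ blocks, express $L_2(n)$ via the nearly independent block ranges $|M_i|$ minus the pairwise overlaps $M_{ij}$, and use the existing variance bound on each block to control $\sum_i|M_i|$. The hard work --- and where Proposition~\ref{Penelemma} and Corollary~\ref{cor2010} are actually used --- is in controlling the overlaps: one proves $\sup_{i<j}E(M_{ij})=O\bigl(n\log\log n/(K\log^2 n)\bigr)$ and, more delicately, $E(M_{ij}M_{i'j'})=O\bigl(n^2(\log\log n)^2/(K^2\log^4 n)\bigr)$ for $\#\{i,j,i',j'\}=4$. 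The latter is the ``cumbersome calculation'' alluded to in the introduction; it is carried out by a seven-event factorization of the relevant intersection probability, summing over the spatial displacement $M\in\mathbb{Z}^2$ between the two common points, and it is precisely here that the refined local estimates of Section~\ref{Sectlimittheo} enter. Your sketch contains neither this block decomposition nor any concrete mechanism producing the needed moment improvement, so the second step of your outline is a genuine gap rather than a plan.
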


Almost the whole proof in \cite{Dv-Er} can be easily generalized to our case
with the observation that since our estimations for $E_{2}(n)$ and $V_{2}(n)$%
\ are uniform in the initial distribution, the computations, used in \cite%
{Dv-Er},\ can be repeated. That is why we write here the only non-trivial
part (i.e. formulae corresponding to (5.13) and (5.15) in \cite{Dv-Er}) of
the generalization. In fact, there is apparently a gap in the argument in 
\cite{Dv-Er}, as it was already remarked in \cite{J-P}. What we represent
here is a simplified version of a proof in \cite{Pene}. For the other parts
of the proof the reader is referred to \cite{Dv-Er}.

\begin{proof}
Denote%
\begin{equation*}
K=\left\lfloor \log \log n\right\rfloor ,
\end{equation*}%
and let $M_{ij}$ $(1\leq i,j\leq K)$ be the number of lattice points which
are common in path parts $M_{i}$ and $M_{j}$, where$\ M_{i}$ denotes the set
of points which are visited between $\left\lfloor (i-1)n/K\right\rfloor +1$
and $\left\lfloor in/K\right\rfloor $ ($1\leq i\leq K$). First, we would
like to prove the formula corresponding to \cite{Dv-Er} (5.13):%
\begin{equation}
\underset{i<j}{\sup }E\left( M_{ij}\right) =O\left( \frac{n\log \log n}{%
K\log ^{2}n}\right) .  \label{apr21egy}
\end{equation}%
If it is done, then for every $\vartheta $ with $0<\vartheta <1$ we will have%
\begin{equation}
\underset{i<j}{\sup }P\left( M_{ij}>\frac{n\log \log n}{K\log ^{1+\vartheta
}n}\right) =O\left( \frac{1}{\log ^{1-\vartheta }n}\right) .
\label{apr21ketto}
\end{equation}%
Let $C_{ij}$ denote the event whose probability is estimated in (\ref%
{apr21ketto}). As (\ref{apr21egy}) yields 
\begin{equation*}
\underset{j}{\sup }E\left( M_{1j}\right) =O\left( \frac{n\log \log n}{K\log
^{2}n}\right)
\end{equation*}%
for arbitrary $\nu $ initial distribution of internal states, and under the
condition $C_{ij}$ the probability of $C_{i^{\prime }j^{\prime }}$ with $%
1\leq i<j<i^{\prime }<j^{\prime }\leq K$ is only affected via the
distribution of $\varepsilon _{i^{\prime }}$, we conclude%
\begin{equation}
\underset{1\leq i<j<i^{\prime }<j^{\prime }\leq K}{\sup }P(C_{i,j}\cap
C_{i^{\prime },j^{\prime }})=O\left( \frac{1}{\log ^{2-2\vartheta }n}\right)
.  \label{apr21kettoot}
\end{equation}%
If we were able to prove%
\begin{equation}
\underset{i,j,i^{\prime },j^{\prime }}{\sup }E\left( M_{ij}M_{i^{\prime
}j^{\prime }}\right) =O\left( \frac{n^{2}\log ^{2}\log n}{K^{2}\log ^{4}n}%
\right) ,  \label{apr21harom}
\end{equation}%
where the supremum is taken over indices for which $\#\left\{ i,j,i^{\prime
},j^{\prime }\right\} =4$ and either $1\leq i<i^{\prime }<j^{\prime }<j\leq
K $ or $1\leq i<i^{\prime }<j<j^{\prime }\leq K$ holds, then using 
\begin{equation*}
P\left( M_{ij}>\frac{n\log \log n}{K\log ^{1+\vartheta }n},M_{i^{\prime
}j^{\prime }}>\frac{n\log \log n}{K\log ^{1+\vartheta }n}\right) <P\left(
M_{ij}M_{i^{\prime }j^{\prime }}>\frac{n^{2}\log ^{2}\log n}{K^{2}\log
^{2+2\vartheta }n}\right)
\end{equation*}%
and (\ref{apr21kettoot}) we could infer that the probability that two events 
$C_{ij}$ and $C_{i^{\prime }j^{\prime }}$ with $\#\left\{ i,j,i^{\prime
},j^{\prime }\right\} =4$ occur is%
\begin{equation}
O\left( \frac{K^{4}}{\log ^{2-2\vartheta }n}\right) ,  \label{apr21negy}
\end{equation}%
which is the formula corresponding to \cite{Dv-Er}\ (5.15).

So our aim is to prove (\ref{apr21egy}) and (\ref{apr21harom}). The idea of 
\cite{Pene} is that in order to prove (\ref{apr21egy}) and (\ref{apr21harom}%
) it is useful to cut down the points of $M_{i}$ which are visited in the
extreme $\left\lceil n/\log ^{2}n\right\rceil $ steps. The number of these
points can be roughly estimated, while the others are visited in steps quite
far from each other and this will be enough for us. However, the precise
arguments need some awkward computations.

\emph{Proof of\ (\ref{apr21egy})} We introduce the notations%
\begin{equation*}
\alpha _{a,b}=\left\{ \forall t=a,...,b-1:\eta _{t}\neq \eta _{b}\right\} 
\text{ and }\beta _{a,b}=\left\{ \forall t=a+1,...,b:\eta _{a}\neq \eta
_{t}\right\}
\end{equation*}%
which will be useful in the sequel. Following \cite{Pene}, we define%
\begin{equation*}
n_{(i,-)}=\left\lfloor (i-1)n/K\right\rfloor +\left\lceil n/\log
^{2}n\right\rceil \text{ and }n_{(i,+)}=\left\lfloor in/K\right\rfloor
-\left\lceil n/\log ^{2}n\right\rceil .
\end{equation*}%
A point, which is common in the paths $\eta _{n_{(i,-)}},...,\eta
_{n_{(i,+)}}$ and $\eta _{n_{(j,-)}},...,\eta _{n_{(j,+)}}$ and not visited
in the extreme $\left\lceil n/\log ^{2}n\right\rceil $ steps of $M_{i}$ and $%
M_{j}$, has a pair of indices $(k,l)$, $k\in n_{(i,-)},...,n_{(i,+)}$, $l\in
n_{(j,-)},...,n_{(j,+)}$, such that it is visited at steps $k$ and $l$, and
it is not visited during steps $\left\lfloor (i-1)n/K\right\rfloor
+1,...,k-1 $, and steps $l+1,...,\left\lfloor jn/K\right\rfloor $. So we have%
\begin{eqnarray*}
E\left( M_{ij}\right) &\leq &\frac{3n}{\log ^{2}n}+\underset{k=n_{(i,-)}}{%
\overset{n_{(i,+)}}{\sum }}\underset{l=n_{(j,-)}}{\overset{n_{(j,+)}}{\sum }}%
P\left( \alpha _{\left\lfloor (i-1)n/K\right\rfloor +1,k}\cap \left\{ \eta
_{k}=\eta _{l}\right\} \cap \beta _{l,\left\lfloor jn/K\right\rfloor }\right)
\\
&\leq &\frac{3n}{\log ^{2}n} \\
&&+C_{1}\underset{k=n_{(i,-)}}{\overset{n_{(i,+)}}{%
\sum }}\underset{l=n_{(j,-)}}{\overset{n_{(j,+)}}{\sum }}\frac{1}{\log
\left( k-\left\lfloor (i-1)n/K\right\rfloor \right) }\frac{1}{l-k}\frac{1}{%
\log \left( \left\lfloor jn/K\right\rfloor -l\right) } \\
&\leq &\frac{3n}{\log ^{2}n}+C_{2}\frac{n}{K}\frac{\log n-\log \left( n/\log
^{2}n\right) }{\log ^{2}n}=O\left( \frac{n\log \log n}{K\log ^{2}n}\right) .
\end{eqnarray*}%
Note that we have used our estimations for the probability of avoiding the
origin in some steps, visiting a new point, and returning to the origin, and
these estimations are uniform in the initial distribution (with an
appropriate $C_{1}$). Because the events whose intersection's probability is
estimated above are dependent only via the internal states, it is obvious
that the great order is uniform in $i$ and $j$. So we arrived at (\ref%
{apr21egy}).

\emph{Proof of (\ref{apr21harom})} Let us prove%
\begin{equation*}
\underset{1\leq i<i^{\prime }<j^{\prime }<j\leq K}{\sup }E\left(
M_{ij}M_{i^{\prime }j^{\prime }}\right) =O\left( \frac{n^{2}\left( \log \log
n\right) ^{2}}{K^{2}\log ^{4}n}\right) .
\end{equation*}%
Let us introduce the notation $\mathcal{L}$ for the set of $\left(
k,k^{\prime },l^{\prime },l\right) $ such that%
\begin{eqnarray*}
&&n_{(i,-)}\leq k\leq n_{(i,+)},\text{ }n_{(i^{\prime },-)}\leq k^{\prime
}\leq n_{(i^{\prime },+)},  \\
&&n_{(j^{\prime },-)}\leq l^{\prime }\leq
n_{(j^{\prime },+)},\text{ }n_{(j,-)}\leq l\leq n_{(j,+)}.
\end{eqnarray*}%
As it was mentioned before, we estimate the number of pair of points one of
which is visited in either extreme $\left\lceil n/\log ^{2}n\right\rceil $
steps of $M_{i}$, $M_{j}$, $M_{i'}$ or $M_{j'}$  in a very obvious manner. The other pairs of
lattice points ($x$ and $y$, say) have a $\left( k,k^{\prime },l^{\prime
},l\right) $ element of $\mathcal{L}$, such that $x$ is visited at step $k$
but not visited during $\left\lfloor (i-1)n/K\right\rfloor +1,...,k-1$, and
it is visited again at step $l$  but not visited during $l+1,...,\left\lfloor jn/K\right\rfloor $; while $y$ is visited at
step $k^{\prime }$ but not visited during $k^{\prime}+1,...,\left\lfloor i^{\prime }n/K\right\rfloor $, and it is visited again at step $l^{\prime }$ but not visited during 
$\left\lfloor (j^{\prime}-1)n/K\right\rfloor +1,...,l^{\prime }-1$. So we
have%
\begin{equation}
E\left( M_{ij}M_{i^{\prime }j^{\prime }} \right) =O\left( \frac{n^{2}\log
\log n}{K\log ^{4}n}\right) +\underset{M\in \mathbb{Z} ^{2}}{\sum }%
\underset{\left( k,k^{\prime },l^{\prime },l\right) \in \mathcal{L}}{\sum }%
P\left( \mathcal{A}\right) ,  \label{apr21negyot}
\end{equation}%
where%
\begin{eqnarray*}
\mathcal{A} &\mathcal{=}&\alpha _{\left\lfloor (i-1)n/K\right\rfloor
+1,k}\cap \left\{ \eta _{k^{\prime }}-\eta _{k}=M\right\} \cap \beta
_{k^{\prime },\left\lfloor i^{\prime }n/K\right\rfloor }\cap \left\{ \eta
_{l^{\prime }}-\eta _{k^{\prime }}=(0,0)\right\} \\
&&\cap \alpha _{\left\lfloor (j^{\prime }-1)n/K\right\rfloor +1,l^{\prime
}}\cap \left\{ \eta _{l}-\eta _{l^{\prime }}=-M\right\} \cap \beta
_{l,\left\lfloor jn/K\right\rfloor },
\end{eqnarray*}%
Denote the seven events, whose intersection is $\mathcal{A}$, by $\mathcal{A}%
_{1},...,\mathcal{A}_{7}$. Observe that for every $2\leq m\leq 7$ the
probability of $\mathcal{A}_{m}$ under the condition $\mathcal{A}_{1}\cap
...\cap \mathcal{A}_{m-1}$ is just the probability of $\mathcal{A}_{m}$ with
an appropriate initial distribution of $\varepsilon $. As we have uniform
estimations in the initial distribution, we will be able to use them.

In the first step, let us estimate the part of the sum in (\ref{apr21negyot}) corresponding
to $M \in [ -n, n ]^2 $.
Proposition \ref{Penelemma} yields the existence of $a>0$ (which depends
only on the RWwIS), such that%
\begin{eqnarray*}
P\left( \eta _{k}-\eta _{0}=M\right) &<&C_{3}\exp \left( -\frac{a}{2k}%
M^{T}M\right) \left[ \frac{1}{k}+\frac{1}{k^{3/2}}\right] +\frac{C_{3}}{k^{2}%
} \\
&<&C_{4}\left( \frac{1}{k}\exp \left( -\frac{a}{2k}M^{T}M\right) +\frac{1}{%
k^{2}}\right) .
\end{eqnarray*}%
So the formula%
\begin{equation}
C_{5}\frac{1}{\log n}\left( \frac{1}{%
k^{\prime }-k}\exp \left( -\frac{a}{2(k^{\prime }-k)}M^{T}M\right) +\frac{1}{%
(k^{\prime }-k)^{2}}\right) ,  \label{apr21ot}
\end{equation}%
is an upper bound for $P(\mathcal{A}_{1}\cap \mathcal{A}_{2})$, and the formula%
\begin{equation}
C_{5}\frac{1}{\log n}\left( \frac{1}{l-l^{\prime }}\exp \left( -\frac{a%
}{2(l-l^{\prime })}M^{T}M\right) +\frac{1}{(l-l^{\prime })^{2}}\right) .
\label{apr21hat}
\end{equation}%
is an upper bound for $P(\mathcal{A}_{6}\cap \mathcal{A}_{7}|\mathcal{A}_{1}\cap ...\cap \mathcal{A}_{5})$. \\
Consider the following factorization%
\begin{equation}
P(\mathcal{A})=P(\mathcal{A}_{1}\cap \mathcal{A}_{2})P(\mathcal{A}_{3}\cap 
\mathcal{A}_{4}\cap \mathcal{A}_{5}|\mathcal{A}_{1}\cap \mathcal{A}_{2})P(%
\mathcal{A}_{6}\cap \mathcal{A}_{7}|\mathcal{A}_{1}\cap ...\cap \mathcal{A}%
_{5}),  \label{apr21het}
\end{equation}%
and observe that%
\begin{equation}
\underset{k^{\prime },l^{\prime }}{\sum }P(\mathcal{A}_{3}\cap \mathcal{A}%
_{4}\cap \mathcal{A}_{5}|\mathcal{A}_{1}\cap \mathcal{A}_{2})<C_{6}E(M_{i^{%
\prime }j^{\prime }})=O\left( \frac{n\log \log n}{K\log ^{2}n}\right) .
\label{a2010} \end{equation} %
So we have to take the product of the expressions in (\ref{apr21ot}), (\ref%
{apr21hat}) and $P(\mathcal{A}_{3}\cap \mathcal{A}_{4}\cap \mathcal{A}_{5}|%
\mathcal{A}_{1}\cap \mathcal{A}_{2})$ and sum them up in all of the four
indices to estimate (\ref{apr21negyot}). First, let us consider the product
of the first terms in (\ref{apr21ot}) and (\ref{apr21hat}). We have to
estimate%
\begin{equation*}
{\sum }{\sum }\exp \left( -\frac{a}{2}\left( 
\frac{1}{k^{\prime }-k}+\frac{1}{l-l^{\prime }}\right) M^{T}M\right) \frac{P(%
\mathcal{A}_{3}\cap \mathcal{A}_{4}\cap \mathcal{A}_{5}|\mathcal{A}_{1}\cap 
\mathcal{A}_{2})}{(k^{\prime }-k)(l-l^{\prime })\log ^{2}n},
\end{equation*}%
where the to sums are taken over $M\in [-n, n]^2$ and $\left( 
k,k^{\prime},l^{\prime },l\right) \in \mathcal{L}$, respectively.
Using the fact%
\begin{equation}
\underset{d>a}{\sup }\frac{1}{d}\underset{M\in 
\mathbb{Z}
^{2}}{\sum }\exp \left( -\frac{a}{2d}M^{T}M\right) <+\infty  \label{apr24egy}
\end{equation}%
it suffices to estimate%
\begin{eqnarray*}
&&\underset{\left( k,k^{\prime },l^{\prime },l\right) \in \mathcal{L}}{\sum }%
\frac{(k^{\prime }-k)(l-l^{\prime })}{(l-l^{\prime })+(k^{\prime }-k)}\frac{%
P(\mathcal{A}_{3}\cap \mathcal{A}_{4}\cap \mathcal{A}_{5}|\mathcal{A}%
_{1}\cap \mathcal{A}_{2})}{(k^{\prime }-k)(l-l^{\prime })\log ^{2}n} \\
&\leq &\frac{1}{\log ^{2}n}\underset{\left( k,k^{\prime },l^{\prime
},l\right) \in \mathcal{L}}{\sum }\frac{P(\mathcal{A}_{3}\cap \mathcal{A}%
_{4}\cap \mathcal{A}_{5}|\mathcal{A}_{1}\cap \mathcal{A}_{2})}{%
(l-n_{(j^{\prime },+)})+(n_{(i^{\prime },-)}-k)}.
\end{eqnarray*}%
Using (\ref{a2010}), it remains to estimate%
\begin{equation*}
\frac{1}{\log ^{2}n}E\left( M_{ij}\right) \underset{k,l}{\sum }\frac{1}{%
(l-n_{(j,-)})+(n_{(i,+)}-k)+2n/\log ^{2}n-1}
\end{equation*}%
and it is just%
\begin{equation*}
O\left( \frac{n^{2}\left( \log \log n\right) ^{2}}{K^{2}\log ^{4}n}\right)
\end{equation*}%
uniformly in $i$ and $j$, by an elementary computation.

Now, let us consider the product of the first term in (\ref{apr21ot}) and
the second term in (\ref{apr21hat}) (the product of the second term in (\ref%
{apr21ot}) and the first term in (\ref{apr21hat}) can be estimated
equivalently). In this case the easier estimation%
\begin{equation}
P(\mathcal{A}_{3}\cap \mathcal{A}_{4}\cap \mathcal{A}_{5}|\mathcal{A}%
_{1}\cap \mathcal{A}_{2})<C_{8}\frac{1}{l^{\prime }-k^{\prime }}
\label{apr24ketto}
\end{equation}%
will be enough. Thus our aim is to estimate%
\begin{equation*}
\frac{1}{\log ^{2}n}\underset{\left( k,k^{\prime },l^{\prime },l\right) \in 
\mathcal{L}}{\sum }\underset{M\in [-n,n]^{2}}{\sum }\exp \left(
-\frac{a}{2\left( k^{\prime }-k\right) }M^{T}M\right) \frac{1}{(k^{\prime
}-k)(l-l^{\prime })^{2}(l^{\prime }-k^{\prime })}.
\end{equation*}%
As above, we use (\ref{apr24egy}) to handle the exponential terms. So the
following estimation is enough for our purposes%
\begin{eqnarray*}
\frac{1}{\log ^{2}n}\underset{\left( k,k^{\prime },l^{\prime },l\right) \in 
\mathcal{L}}{\sum }\frac{1}{(l-l^{\prime })^{2}(l^{\prime }-k^{\prime })}
&\leq &\frac{1}{\log ^{2}n}\frac{n^{4}}{K^{4}}\frac{\log ^{4}n}{n^{2}}\frac{%
\log ^{2}n}{n} \\
&=&O\left( \frac{n^{2}\left( \log \log n\right) ^{2}}{K^{2}\log ^{4}n}%
\right) .
\end{eqnarray*}

Our last task is to estimate the product of the second term in (\ref{apr21ot}%
) and (\ref{apr21hat}). The previous estimation (\ref{apr24ketto}) and%
\begin{eqnarray*}
&&\frac{1}{\log ^{2}n}\underset{\left( k,k^{\prime },l^{\prime },l\right) \in 
\mathcal{L}}{\sum }\underset{M\in \left[ -n,n\right] ^{2}}{\sum }\frac{1}{%
(k^{\prime }-k)^{2}(l-l^{\prime })^{2}(l^{\prime }-k^{\prime })} \\ 
&\leq &n^{6}%
\frac{1}{\log ^{2}n}\frac{\log ^{4}n}{n^{2}}\frac{\log ^{4}n}{n^{2}}\frac{%
\log ^{2}n}{n}
=O\left( \frac{n^{2}\left( \log \log n\right) ^{2}}{K^{2}\log ^{4}n}\right)
\end{eqnarray*}%
yield the required estimation.

In the second step, we estimate the part of the sum in (\ref{apr21negyot}) 
corresponding to $M \in \mathbb{Z} ^2 \setminus [-n,n]^2 $. 
Corollary \ref{cor2010} implies that
\begin{equation*}
\sup_{(k,k',l,l') \in \mathcal{L} }
\underset{M \in \mathbb{Z}^2 \setminus [-n,n]^2}{\sum }P(
\mathcal{A}_{6}|\mathcal{A}_{1}\cap ...\cap \mathcal{A}_5)
<  \frac{1}{n^{1/4- \varepsilon}}
\end{equation*}
for small $\varepsilon >0 $. Thus
\begin{eqnarray*}
&&\sup _{k', l'} \underset{k,l}{\sum }
\sup _{M \in \mathbb{Z}^2 \setminus [-n,n]^2} P(\mathcal{A}_{1}\cap \mathcal{A}_{2})
\underset{M \in \mathbb{Z}^2 \setminus [-n,n]^2}{\sum }P(
\mathcal{A}_{6}\cap \mathcal{A}_{7}|\mathcal{A}_{1}\cap ...\cap \mathcal{A}%
_{5}) \\
&<& C_7 n^2 \frac{1}{\log n} \frac{\log ^2 n}{n}
\frac{1}{\log n} \frac{1}{n^{1/4- \varepsilon}}.
\end{eqnarray*}
The above estimation together with (\ref{a2010}) yield the required error term. 

A modified version of the proof presented above can be repeated for indices $%
1\leq i<i^{\prime }<j<j^{\prime }\leq K$. So we have finished the proof of
formula (\ref{apr21harom}).
\end{proof}

\section{Visited points in one dimension}
\label{Sectonedim}

Investigating the one dimensional case is not as important as the higher
dimensions, as Lorentz processes used to be examined mainly in higher
dimensions. However, one dimension is also interesting, as we will see some
new features. We need some different means from the previous ones to prove
asymptotics for $E_{1}\left( n\right) $, namely Tauberian arguments. Let us
see the details.

\begin{proposition}
\label{gamma1dim}For a one dimensional RWwIS with $\varepsilon _{0}\sim \mu $%
\ we have%
\begin{equation*}
\gamma _{1}(n)\sim \sqrt{\frac{2\left\vert \sigma \right\vert }{\pi }}\ast
n^{-1/2}
\end{equation*}
\end{proposition}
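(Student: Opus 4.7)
The plan is to mimic the matrix renewal setup from the proofs of Theorems \ref{highdimEdn} and \ref{twodimEdn}, but to combine it with generating functions and a Tauberian theorem rather than a direct asymptotic analysis of partial sums. This is necessary because in one dimension the RWwIS is recurrent and $\gamma_1(n)\to 0$ only polynomially, so the matrix renewal equation cannot be solved by the matching-constant argument used in higher dimensions.

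First I would set up the reversed walk exactly as in the proof of Theorem \ref{highdimEdn}, defining $U_k\in\mathbb{R}^{s\times s}$ as the matrix of return-to-origin transition probabilities for the reversed walk and $R_k\in\mathbb{R}^s$ as the vector of probabilities of staying away from the origin for $k$ steps, so that $\sum_{k=0}^{n}U_k R_{n-k}=\underline{1}$ and $\gamma_1(n)=\langle\mu,R_n\rangle$. Passing to generating functions converts this convolution identity into $\hat U(z)\hat R(z)=\underline{1}/(1-z)$ for $|z|<1$. By Proposition \ref{Proprevw} I can apply Theorem \ref{limittheorem} to the reversed walk to obtain $(U_k)_{i,j}=\mu_j/\sqrt{2\pi k\sigma^{2}}+o(1/k)$ uniformly in $i,j$. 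Using the classical Abelian asymptotic $\sum_{k\geq 1}z^k/\sqrt{k}\sim\sqrt{\pi/(1-z)}$ as $z\to 1^-$ then yields
\begin{equation*}
\hat U(z)=\frac{1}{\sqrt{2\sigma^{2}(1-z)}}\,\underline{1}\mu^{T}+B(z),
\end{equation*}
where $B(z)$ remains bounded as $z\to 1^-$.

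The key algebraic step is the inversion. Since $\mu^{T}\underline{1}=1$, the matrix $\underline{1}\mu^{T}$ is a rank-one projection, so the Sherman--Morrison formula gives an exact expression for $\hat U(z)^{-1}$. Because $\mu^{T}(I-\underline{1}\mu^{T})=0$, the regular contributions from $B(z)$ are annihilated on the left by $\mu^{T}$ after left-and-right projection, and one finds
\begin{equation*}
\sum_{n=0}^{\infty}\gamma_1(n)z^{n}=\mu^{T}\hat R(z)\sim\sqrt{\frac{2\sigma^{2}}{1-z}}\quad(z\to 1^-).
\end{equation*}
Finally, since $\gamma_1(n)=P(\widetilde\eta_j\neq 0,\ j=1,\ldots,n)$ is manifestly decreasing in $n$, Karamata's Tauberian theorem with index $\rho=1/2$ converts this generating-function asymptotic into $\gamma_1(n)\sim\sqrt{2\sigma^{2}}\cdot n^{-1/2}/\Gamma(1/2)=\sqrt{2|\sigma|/\pi}\,n^{-1/2}$, which is the claimed asymptotic.

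The main obstacle is the rank-one inversion: one must verify that the remainder $B(z)$ contributes only subleading terms to $\hat U(z)^{-1}$ once the left-and-right projection by $\mu^{T}$ and $\underline{1}$ is applied, so that the error is genuinely $o(1/\sqrt{1-z})$ and Karamata applies in its asymptotic-equivalence form rather than only as an $O$-bound. A minor secondary point is that the monotonicity hypothesis of Karamata comes for free from the reversed-walk representation, which makes the Tauberian step cleanly applicable once the generating-function asymptotic is in hand.
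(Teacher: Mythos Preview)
Your approach---reversed-walk renewal equation, pass to generating functions, extract the asymptotic of $\mu^T\hat R(z)$, then apply Karamata via the monotonicity of $\gamma_1(n)$---is exactly the paper's. Two points in your execution need correction, though.

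First, you invoke Theorem~\ref{limittheorem} for the asymptotics of $(U_k)_{i,j}$, but that theorem requires the third-moment condition~(\ref{harmadikmomentum}), which Proposition~\ref{gamma1dim} does not assume. The paper uses only the Kr\'amli--Sz\'asz local limit theorem (Theorem~1), which needs just the basic assumptions and already gives $(U_k)_{i,j}\sim\mu_j/\sqrt{2\pi\sigma^2 k}$; that weaker input is all that is required.

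Second, the claim that $B(z)$ remains bounded as $z\to 1^-$ is not justified, even under the $o(1/k)$ remainder of Theorem~\ref{limittheorem}: an $o(1/k)$ sequence can have a generating function that diverges like $\log\frac{1}{1-z}$. What you actually get from the leading asymptotic alone is $B(z)=o\bigl((1-z)^{-1/2}\bigr)$, and that suffices: from positivity and any single row of $\hat U(z)\hat R(z)=\underline{1}/(1-z)$ one sees each $\hat R_j(z)=O\bigl((1-z)^{-1/2}\bigr)$, so $\mu^T B(z)\hat R(z)=o\bigl((1-z)^{-1}\bigr)$ and the leading term survives. The Sherman--Morrison packaging is therefore both unnecessary and a bit misleading (the rank-one part $\underline{1}\mu^T$ is singular, not a perturbation of something invertible). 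The paper achieves the same effect more directly by dividing row $i$ of the scalar system by the diagonal generating function $\alpha_{ii}(x)$, so that only the convergence of the ratios $\alpha_{ij}/\alpha_{ii}\to\mu_j/\mu_i$ is needed and no explicit remainder term has to be isolated.
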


\begin{proof}
Just like in the higher dimensional cases we consider the renewal equation
for the reversed walk%
\begin{equation*}
\overset{n}{\underset{k=0}{\sum }}U_{k}\cdot R_{n-k}=\underline{1}.
\end{equation*}%
Now, from row $i$ we obtain%
\begin{equation}
\overset{s}{\underset{j=1}{\sum }}\overset{n}{\underset{k=0}{\sum }}\left(
U_{k}\right) _{i,j}x^{k}\left( R_{n-k}\right) _{j}x^{n-k}=x^{n}.
\label{feb16egy}
\end{equation}%
Let us introduce the notations%
\begin{eqnarray*}
\overset{\infty }{\underset{k=0}{\sum }}\left( U_{k}\right) _{i,j}x^{k}
&=&\alpha _{ij}\left( x\right) \\
\overset{\infty }{\underset{k=0}{\sum }}\left( R_{k}\right) _{j}x^{k}
&=&\beta _{j}\left( x\right) \\
\overset{\infty }{\underset{k=0}{\sum }}x^{k} &=&\omega \left( x\right) .
\end{eqnarray*}%
Obviously, these power series are convergent for $0\leq x<1$. In these
terms, (\ref{feb16egy}) means%
\begin{equation}
\overset{s}{\underset{j=1}{\sum }}\alpha _{ij}\beta _{j}=\omega .
\label{feb16ketto}
\end{equation}%
In order to obtain the order of the coefficients of $\gamma _{1}\left(
n\right) =\overset{s}{\underset{j=1}{\sum }}\mu _{j}\left( R_{n}\right)
_{j} $ we use a Tauberian theorem which may be found in \cite{Feller}
(Theorem 5 of XIII.5). According to this we have%
\begin{equation}
\omega \left( x\right) \sim \frac{1}{1-x},\qquad x\rightarrow 1-.
\label{feb16harom}
\end{equation}%
For the coefficients of $\alpha _{ij}$%
\begin{equation*}
\overset{n}{\underset{k=0}{\sum }}\left( U_{k}\right) _{i,j}\sim 2\frac{1}{%
\sqrt{2\pi \left\vert \sigma \right\vert }}\mu _{j}n^{1/2}.
\end{equation*}%
So, using the Tauberian theorem, we infer%
\begin{equation}
\alpha _{ij}\left( x\right) \sim 2\frac{1}{\sqrt{2\pi \left\vert \sigma
\right\vert }}\mu _{j}\Gamma \left( \frac{3}{2}\right) \ast \frac{1}{\left(
1-x\right) ^{1/2}},\qquad x\rightarrow 1-.  \label{feb16negy}
\end{equation}%
From (\ref{feb16ketto}) we obtain%
\begin{equation}
\overset{s}{\underset{j=1}{\sum }}\frac{\alpha _{ij}}{\alpha _{ii}}\beta
_{j}=\frac{\omega }{\alpha _{ii}}.  \label{feb16ot}
\end{equation}%
Now, (\ref{feb16negy}) yields 
\begin{equation}
\frac{\alpha _{ij}\left( x\right) }{\alpha _{ii}\left( x\right) }\rightarrow 
\frac{\mu _{j}}{\mu _{i}},\qquad x\rightarrow 1-.  \label{feb16hat}
\end{equation}%
Whence 
\begin{equation*}
\overset{s}{\underset{j=1}{\sum }}\mu _{j}\beta _{j}\left( x\right) \sim 
\frac{\sqrt{2\pi \left\vert \sigma \right\vert }}{2\Gamma \left( \frac{3}{2}%
\right) }\ast \frac{1}{\left( 1-x\right) ^{1/2}},\qquad x\rightarrow 1-.
\end{equation*}%
Since $\overset{s}{\underset{j=1}{\sum }}\mu _{j}\left( R_{k}\right) _{j}$
is monotonic in $k$, using the mentioned Tauberian theorem we conclude%
\begin{equation*}
\gamma _{1}\left( n\right) =\overset{s}{\underset{j=1}{\sum }}\mu
_{j}\left( R_{k}\right) _{j}\sim \frac{\sqrt{2\pi \left\vert \sigma
\right\vert }}{2\Gamma \left( \frac{3}{2}\right) \Gamma \left( \frac{1}{2}%
\right) }n^{-1/2}=\sqrt{\frac{2\left\vert \sigma \right\vert }{\pi }}n^{-1/2}
\end{equation*}
\end{proof}

\begin{proposition}
With arbitrary distribution of $\varepsilon _{0}$ the following holds%
\begin{equation*}
E_{1}\left( n\right) \sim \sqrt{\frac{8\left\vert \sigma \right\vert }{\pi }}%
n^{1/2}.
\end{equation*}
\end{proposition}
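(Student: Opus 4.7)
My plan proceeds in two steps. First, I would treat the stationary case $\varepsilon_0 \sim \mu$. By linearity of expectation applied to the indicators of visiting a new site at each step, $E_1^\mu(n) = \sum_{k=0}^n \gamma_1(k)$, and Proposition \ref{gamma1dim} gives $\gamma_1(k) \sim \sqrt{2|\sigma|/\pi}\,k^{-1/2}$. A routine Ces\`{a}ro/integral-comparison argument then yields
\[
E_1^\mu(n) \sim \sqrt{\frac{2|\sigma|}{\pi}} \cdot 2\sqrt{n} = \sqrt{\frac{8|\sigma|}{\pi}}\,n^{1/2},
\]
which is the stated asymptotic under this initial distribution.

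Second, I would extend to an arbitrary initial distribution $\nu$ by showing $|E_1^\nu(n) - E_1^\mu(n)| = o(\sqrt{n})$. For any cutoff $1 \leq K \leq n$, decompose $\{\eta_0, \ldots, \eta_n\} = A \cup B$ with $A = \{\eta_0, \ldots, \eta_K\}$ and $B = \{\eta_{K+1}, \ldots, \eta_n\}$. Then $|B| \leq L_1(n) \leq |A| + |B|$ with $|A| \leq K+1$, and the Markov property combined with spatial homogeneity (conditional on $(\eta_K, \varepsilon_K)$, the subsequent trajectory is a fresh RWwIS with initial internal state $\varepsilon_K$) gives $E[\,|B|\,] = E_1^{\nu Q^K}(n-K) + O(1)$, where $\nu Q^K$ denotes the marginal law of $\varepsilon_K$. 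Consequently
\[
\bigl|E_1^\nu(n) - E_1^{\nu Q^K}(n-K)\bigr| = O(K),
\]
uniformly in $\nu$. Specializing to $\nu = \mu$ (where $\mu Q^K = \mu$) gives the same bound for $E_1^\mu$, so by the triangle inequality it remains to compare $E_1^{\nu Q^K}(n-K)$ to $E_1^\mu(n-K)$. Writing the difference as $\sum_j ((\nu Q^K)_j - \mu_j)\, E_1^{e_j}(n-K)$, invoking the crude bound $E_1^{e_j}(m) \leq \mu_j^{-1} E_1^\mu(m) = O(\sqrt{m})$, and using the exponential ergodicity $\|\nu Q^K - \mu\|_1 \leq C e^{-\lambda K}$ of the internal chain, this comparison is $O(e^{-\lambda K}\sqrt{n})$. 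The choice $K = K(n) = \lceil \lambda^{-1} \log n \rceil$ then balances both contributions and yields $|E_1^\nu(n) - E_1^\mu(n)| = O(\log n) = o(\sqrt{n})$, completing the proof.

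The only mildly delicate step is the decomposition estimate $|E_1^\nu(n) - E_1^{\nu Q^K}(n-K)| = O(K)$: one must observe that, conditional on $\varepsilon_K$, the cardinality $|B|$ differs from the number of distinct sites visited by a fresh RWwIS over $n-K$ steps by at most one (depending on whether the origin of the shifted walk is revisited at times $1,\ldots,n-K$). Everything else is routine estimation based on the exponential mixing of the internal chain and the already established stationary asymptotic.
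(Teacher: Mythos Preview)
Your proof is correct, but it follows a genuinely different---and more elementary---route than the paper's.

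The paper works at the level of $\gamma_1^{\nu}(n)$ (the probability of visiting a new site at time $n$) rather than $E_1^{\nu}(n)$. Writing the analogue of (\ref{highprop1}) produces an error term $p(K,n)$, the probability of returning at time $n$ to a site visited in the first $K$ steps but not in between. In dimension one the crude local-limit bound gives only $p(K,n)=O(Kn^{-1/2})$, which is \emph{not} $o(n^{-1/2})$; to salvage the argument the paper must prove the first-return estimate $f^{\nu}(n)=O(n^{-3/2})$ (Lemma~\ref{firstreturnorder}), which requires a separate, nontrivial argument via the reversed walk. Only then can one take $K=\lfloor\sqrt{n}\rfloor$ and conclude $\gamma_1^{e_j}(n)\sim\sqrt{2|\sigma|/\pi}\,n^{-1/2}$, from which the asymptotic for $E_1$ follows by summation.

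You bypass all of this by working directly with $E_1^{\nu}(n)$. Your set decomposition $|B|\le L_1(n)\le |B|+K+1$ and the bound $E_1^{e_j}(m)\le \mu_j^{-1}E_1^{\mu}(m)$ (immediate from $E_1^{\mu}=\sum_k\mu_k E_1^{e_k}$) let you compare $E_1^{\nu}(n)$ and $E_1^{\mu}(n)$ using nothing beyond the exponential mixing of the finite internal chain. This is shorter and avoids Lemma~\ref{firstreturnorder} entirely. The trade-off is that the paper's route delivers the stronger intermediate statement $\gamma_1^{\nu}(n)\sim\sqrt{2|\sigma|/\pi}\,n^{-1/2}$ for arbitrary $\nu$, which may be of independent interest, whereas your argument only controls the partial sums.
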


\begin{proof}
From Proposition \ref{gamma1dim} the assertion immediately follows in the
case of $\varepsilon _{0}\sim \mu $. However, the case of arbitrary initial distribution requires a little care. Analogously to (\ref{highprop1}), we
have%
\begin{equation}
\gamma ^{e_{j}}(n)=\overset{s}{\underset{k=1}{\sum }}\mu _{k}\gamma
^{e_{k}}(n-K)+\overset{s}{\underset{k=1}{\sum }}b_{k}^{j}(K)\gamma
^{e_{k}}(n-K)+p(K,n). \label{last}
\end{equation}%
But now, the rough estimation of $p(K,n)$ used in higher dimensions is not enough,
as the local limit theorem provides a term of order $n^{-1/2}$ and our aim is to prove $o \left( n^{-1/2} \right)$.
Nevertheless, because of the definition of $p(K,n)$, we have to estimate the probability of the first return to some place after $m$ steps. In particular, if we proved that this probability is $O(m^{-3/2})$,
then taking $K=\left\lfloor \sqrt{n}\right\rfloor $ and multiplying (\ref{last}) by $\sqrt{n}$ we
would find that the right hand side converges to $\sqrt{\frac{2\left\vert \sigma
\right\vert }{\pi }}$ as $n\rightarrow \infty $.
So, in order to finish our proof, we need the following lemma.

\end{proof}

\begin{lemma}
\label{firstreturnorder}For a one dimensional RWwIS fulfilling our basic assumptions
with arbitrary $\nu $\ distribution of $\varepsilon _{0}$%
\begin{equation}
f^{\nu }(n)=O\left( n^{-3/2}\right),  \label{firstretd1}
\end{equation}%
where $f^{\nu }(n)$ denotes the probability of the event that the random walker starting from the origin with $\varepsilon _{0} \sim \nu$
returns to the origin at time $n$ for the first time.
\end{lemma}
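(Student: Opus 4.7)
The plan is to exploit the renewal structure of returns to the origin via matrix-valued generating functions. Define the $s\times s$ matrices
\[ U^{(n)}_{ij} = P(\eta_n = 0,\, \varepsilon_n = j \mid \eta_0=0,\, \varepsilon_0=i), \]
\[ F^{(n)}_{ij} = P(\eta_k \neq 0 \text{ for } 1\leq k<n,\; \eta_n = 0,\; \varepsilon_n = j \mid \eta_0=0,\; \varepsilon_0=i), \]
so that $f^{\nu}(n) = \nu^{T} F^{(n)} \underline{1}$. Decomposing a visit to the origin at step $n$ by the time of the first return gives the matrix renewal equation $U^{(n)} = \sum_{k=1}^{n} F^{(k)} U^{(n-k)}$ for $n\geq 1$, hence the corresponding generating functions satisfy $\hat F(x) = I - \hat U(x)^{-1}$.

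The core step is a singular analysis of $\hat U(x)$ as $x\to 1^-$, starting from the Fourier representation
\[ \hat U(x) = \frac{1}{2\pi}\int_{-\pi}^{\pi}(I - x\alpha(t))^{-1}\,dt. \]
Using Kato perturbation theory, write $\alpha(t) = \lambda(t)p(t) + \alpha_R(t)$ with $p(0) = \underline{1}\mu^{T}$ and $\alpha_R(t)$ of spectral radius strictly less than one uniformly in $t$ (by the irreducibility/aperiodicity assumption (i)). The $\alpha_R$-piece together with the contribution of $|t|$ bounded away from $0$ combine into a matrix analytic in $x$ on a disk $|x|<1+\eta$; for the remaining piece, the change of variables $t = \sqrt{1-x}\,s$ in $\int \frac{p(t)}{1 - x\lambda(t)}\,dt$, together with the expansion $\lambda(t) = 1 - \tfrac{\sigma^{2}}{2}t^{2} + o(t^{2})$ coming from (\ref{largesteigenvalue}), yields
\[ \hat U(x) = \frac{1}{\sigma\sqrt{2(1-x)}}\,\underline{1}\mu^{T} + G(x), \]
where $G(x)$ stays bounded (and continuous) as $x\to 1^{-}$.

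Since the singular leading term has rank one, the Sherman-Morrison formula gives
\[ \hat U(x)^{-1} = L + \sqrt{1-x}\,H(x), \]
with $H(x)$ bounded near $x=1$ and $L$ satisfying $L\underline{1} = 0$ --- the latter reflecting the a.s.\ recurrence of the one-dimensional RWwIS, equivalent to $\hat F(1)\underline{1} = \underline{1}$. Consequently
\[ \hat f^{\nu}(x) = \nu^{T}\bigl(I - \hat U(x)^{-1}\bigr)\underline{1} = 1 - \sqrt{1-x}\,\nu^{T}H(x)\underline{1}, \]
and the dependence on $\nu$ is linear, which delivers uniformity in $\nu$ at the end.

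Finally, convert this local singular expansion into the coefficient bound by a transfer theorem of Flajolet--Odlyzko type (or, equivalently, by Feller's Tauberian theorem applied to the monotone tail $\sum_{k\geq n}f^{\nu}(k)$): the non-singular part of $\hat f^{\nu}$ extends analytically to a neighborhood of the closed unit disk punctured at $1$, the singular part is $-\nu^{T}H(1)\underline{1}\cdot\sqrt{1-x}$ plus strictly smaller singular terms, and $[x^{n}]\sqrt{1-x} = O(n^{-3/2})$. The main technical obstacle is precisely this last step: verifying that the only boundary singularity of $\hat U(x)$ on $|x|=1$ is at $x=1$ (which boils down to $|\lambda(t)|<1$ for $t\neq 0$, a consequence of (i) and (ii)) and that it is of the simple $\sqrt{1-x}$ type; the boundedness of the remainder $G$ is routine under (\ref{harmadikmomentum}) and requires a little extra care otherwise. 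Once these are in hand, (\ref{firstretd1}) follows uniformly in $\nu$.
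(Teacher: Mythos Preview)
Your route via matrix generating functions and singularity analysis is genuinely different from the paper's proof. The paper argues probabilistically: it writes $f^{e_i}(3m)=\sum_{y,z\neq 0}\sum_{j,k,l}Q^{m}(0,i,y,j)Q^{m}(y,j,z,k)Q^{m}(z,k,0,l)$, where $Q^{m}$ is the taboo transition kernel avoiding $0$, and bounds the three factors separately by $O(m^{-1/2})$ each --- the middle one by the local limit theorem, the first by Proposition~\ref{gamma1dim}, and the third by passing to the reversed walk and invoking Proposition~\ref{gamma1dim} again. This is entirely elementary, uses only the basic second-moment assumptions, and recycles results already in hand. Your approach, by contrast, packages everything into $\hat F(x)=I-\hat U(x)^{-1}$ and reads off coefficients from the $\sqrt{1-x}$ singularity; it is more systematic and would in principle yield sharper asymptotics, not just an $O$-bound.

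There is, however, a real gap in your final step. The Tauberian alternative you offer does \emph{not} do what you need: Feller's theorem applied to the monotone tail $T(n)=\sum_{k\geq n}f^{\nu}(k)$ gives $T(n)\sim C n^{-1/2}$, but from this one cannot conclude $f^{\nu}(n)=T(n)-T(n+1)=O(n^{-3/2})$ without monotonicity of $f^{\nu}(n)$ itself, which you have no reason to expect here. So you are forced onto the Flajolet--Odlyzko transfer theorem, and for that you must extend the expansion $\hat U(x)=\tfrac{1}{\sigma\sqrt{2(1-x)}}\underline{1}\mu^{T}+G(x)$ to a complex $\Delta$-domain at $x=1$, not merely to the real segment $[0,1)$. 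This is feasible (the key inputs are indeed $|\lambda(t)|<1$ for $t\neq 0$ and the quadratic vanishing of $1-\lambda(t)$ at $t=0$), but it is the heart of the matter and is only asserted, not carried out. You also silently assume $G(1)$ is invertible and $\mu^{T}G(1)^{-1}\underline{1}\neq 0$ in the Sherman--Morrison step; neither is automatic in the matrix setting. Finally, note that the paper's lemma is stated under the basic assumptions alone, whereas your remark that boundedness of $G$ ``requires a little extra care'' without (\ref{harmadikmomentum}) understates the issue: with only $o(t^{2})$ control on $\lambda(t)$, proving the remainder is genuinely $O(\sqrt{1-x})$ in a $\Delta$-domain is delicate.
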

\begin{proof}
First of all, observe that proving the statement for $\nu =\mu $ would be
enough as since our basic assumption (i) all component of $\mu $ are
positive.
In the proof,  we generalize an argument in \cite{Bender}. Define%
\begin{equation*}
Q^{n}(x,i,y,j)=P\left( \xi _{n}=(y,j),\eta _{k}\neq 0,\forall 1\leq
k<n|\xi _{0}=(x,i)\right) .
\end{equation*}%

Let $n=3m$ and $1\leq i\leq m$. The cases $n=3m\pm 1$ can be treated the
same way.%
\begin{eqnarray*}
&&f^{e_{i}}(n) =\underset{l=1}{\overset{s}{\sum }}Q^{n}(0,i,0,l)=%
\underset{y,z\neq 0}{\sum }\underset{j,k,l=1}{\overset{s}{\sum }}%
Q^{m}(0,i,y,j)Q_{d}^{m}(y,j,z,k)Q^{m}(z,k,0,l) \\
&\leq &\underset{y,z,j,k}{\sup }Q^{m}(y,j,z,k)P(\eta _{k}\neq 0,\forall
1\leq k<m|\xi _{0}=(0,i))\underset{z\neq 0}{\sum }\underset{k,l=1}{\overset{s%
}{\sum }}Q^{m}(z,k,0,l)
\end{eqnarray*}%
From the local limit theorem it follows that
\begin{equation*}
\underset{y,z,j,k}{\sup }Q^{m}(y,j,z,k)=O(m^{-1/2}).
\end{equation*}%
Proposition \ref{gamma1dim} yields $P(\eta _{k}\neq 0,\forall 1\leq k<m|\xi
_{0}=(0,i))=O(m^{-1/2})$. So, it suffices to prove%
\begin{equation}
\underset{z\neq 0}{\sum }\underset{k,l=1}{\overset{s}{\sum }}%
Q^{m}(z,k,0,l)=O(m^{-1/2}).  \label{firstreturnclaim1}
\end{equation}%
In order to prove (\ref{firstreturnclaim1}) we
use the reversed walk, again. (\ref{reversedwalk}) yields that for all $%
((0,i_{1}),(y_{1},i_{2}),(y_{1}+y_{2},i_{3}),...,(y_{1}+y_{2}+...+y_{m-1},i_{m}))
$ trajectories%
\begin{eqnarray*}
&& \mu _{i_{1}}p_{y_{1},i_{1},i_{2}}\mu _{i_{2}}p_{y_{2},i_{2},i_{3}}...\mu
_{i_{m-1}}p_{y_{m-1},i_{m-1},i_{m}} \\
&=& \mu _{i_{2}}q_{-y_{1},i_{2},i_{1}}\mu
_{i_{3}}q_{-y_{2},i_{3},i_{2}}...\mu _{i_{m}}q_{-y_{m-1},i_{m},i_{m-1}},
\end{eqnarray*}%
where the factors $\mu _{i_{2}},...,\mu _{i_{m-1}}$ drop out. Thus%
\begin{equation}
\underset{z\neq 0}{\sum }\underset{k,l=1}{\overset{s}{\sum }}%
Q^{m}(z,k,0,l)\leq \underset{1\leq i,j\leq s}{\max }\frac{\mu _{i}}{\mu
_{j}}\underset{z\neq 0}{\sum }\underset{k,l=1}{\overset{s}{\sum }}\widetilde{%
Q}^{m}(0,l,z,k),  \label{firstreturnproofend1}
\end{equation}%
where $\widetilde{Q}$ is the same object as $Q$ defined for the reversed
walk. The right hand side of (\ref{firstreturnproofend1}) can be bounded by
some constant times the probability of the event that the stationary
reversed walk does not return to the origin in the first $m$ steps, which is
$O(m^{-1/2})$. Thus we arrived at (\ref{firstreturnclaim1}).

\end{proof}

So, we have ascertained the asymptotic behavior of $E_{d}\left( n\right) $
in each dimension. While strong law of large numbers holds in $d \geq 2$, 
even the weak law of large numbers for one dimensional SSRW fails to hold, which is a consequence of, for instance, Theorem 1 in \cite{Chen}.

\section{Final remarks}
\label{Sectfinalremarks}

\begin{enumerate}
\item Our asymptotic investigations show that RWwIS behaves like the simple
symmetric random walk in an asymptotic sense. The main features are very
similar, only the involved constants differ. The results showing that the
asymptotic behavior is independent from the initial distribution on the
internal states (e.g. Proposition \ref{highdimEdnprop} and \ref%
{twodimednprop}) are intuitively trivial as after some steps $\varepsilon $
will be very close to $\mu $. Nevertheless, these assertions need formal
proofs as well, especially as they are used in the sequel. Of course, this
similarity to the simple symmetric random walk could change if the
generalization were carried further, for instance, if a countable set of
internal states was allowed. This model is not yet discussed, it must need
some more involved technics.

\item Our basic assumption (ii) is not essential. The above theorems could
be generalized to the case of dropping basic assumption (ii), as the limit
theorem in \cite{Kramli-Szasz} is proved for this case, as well. Only the
computations would become longer. The other three assumptions are essential.
\end{enumerate}

\begin{acknowledgements}
I am grateful to Domokos Sz\'{a}sz for his constant support and invaluable
comments before and during writing this paper. My research work was
partially supported by OTKA (Hungarian National Research Fund) grant
NK63066. I thank for the kind hospitality of Erwin Schr\"{o}dinger Institute
(Vienna), where a part of this work was done. I am also grateful to the referee
for his careful reading of this paper and for his pertinent remarks.
\end{acknowledgements}

\end{document}